\definecolor{red}{rgb}{1.00,0.00,0.00}
{\numberwithin{equation}{section}
\setlength{\parindent}{1em}

\newtheorem{theorem}{Theorem}[section]
\newtheorem{lemma}{Lemma}[section]
\newtheorem{remark}{Remark}[section]

\renewcommand{\footnotesize}{\scriptsize}
\newcommand{\normmm}[1]{{\left\vert\kern-0.25ex\left\vert
\kern-0.25ex\left\vert #1
    \right\vert\kern-0.25ex\right\vert\kern-0.25ex\right\vert}}
\newcommand{\avg}[1]{\ensuremath{\left\{\!\!\left\{ #1\right\}\!\!\right\}}}
\newcommand{\jump}[1]{\ensuremath{\left\llbracket #1\right\rrbracket}}

\newcommand{\tdiv}{\mathrm{div}\,}

\geometry{left=3cm,right=3cm,top=2.5cm,bottom=2cm}
\newcommand{\bs}{\boldsymbol}

\begin{document}           

\title{A strongly mass conservative method for the coupled Brinkman-Darcy flow and transport}
\author{Lina Zhao\footnotemark[1]\quad and\quad
Shuyu Sun\footnotemark[2]}
\renewcommand{\thefootnote}{\fnsymbol{footnote}}
\footnotetext[1]{Department of Mathematics, City University of Hong Kong, Kowloon Tong, Hong Kong SAR, China. ({linazha@cityu.edu.hk}).}
\footnotetext[2]{Computational Transport Phenomena Laboratory (CTPL), Physical Science and Engineering Division (PSE), King Abdullah University of Science andTechnology, Thuwal, 23955-6900, Saudi Arabia. Corresponding author. (shuyu.sun@kaust.edu.sa).}
\maketitle

\begin{abstract}
In this paper, a strongly mass conservative and stabilizer free scheme is designed and analyzed for the coupled Brinkman-Darcy flow and transport. The flow equations are discretized by using a strongly mass conservative scheme in mixed formulation with a suitable incorporation of the interface conditions. In particular, the interface conditions can be incorporated into the discrete formulation naturally without introducing additional variables. Moreover, the proposed scheme behaves uniformly robust for various values of viscosity. A novel upwinding staggered DG scheme in mixed form is exploited to solve the transport equation, where the boundary correction terms are added to improve the stability. A rigorous convergence analysis is carried out for the approximation of the flow equations. The velocity error is shown to be independent of the pressure and thus confirms the pressure-robustness. Stability and a priori error estimates are also obtained for the approximation of the transport equation; moreover, we are able to achieve a sharp stability and convergence error estimates thanks to the strong mass conservation preserved by our scheme. In particular, the stability estimate depends only on the true velocity on the inflow boundary rather than on the approximated velocity. Several numerical experiments are presented to verify the theoretical findings and demonstrate the performances of the method.

\end{abstract}

\textbf{Keywords:} discontinuous Galerkin methods; mixed finite element method; Brinkman-Darcy flow; pressure-robustness; mass conservation; coupled flow and transport.

\pagestyle{myheadings} \thispagestyle{plain}
\markboth{ZhaoChung} {The coupling of Brinkman-Darcy flows and transport}

\section{Introduction}

%
%

Coupling Brinkman and Darcy models describes the interaction of flow and transport phenomena in two different domains separated by an interface. This model has been used in the hydrology and biological applications and typical examples include subsurface flow, hydraulic fractures and perfusion of soft living tissues. A great amount of effort has been devoted to the devising of efficient numerical schemes for the coupled flow and transport. In \cite{SunWheeler05}, primal discontinuous Galerkin methods with interior penalty are developed to solve the coupled system of flow and reactive transport. In \cite{Vassilev09}, a mixed finite element element is exploited to approximate the Stokes-Darcy system and a local discontinuous Galerkin method is used to discretize the transport equation. In \cite{Rui17}, a stabilized mixed finite element method in conjunction with velocity-pressure-concentration formulation is exploited to discretize the coupled Stokes-Darcy flow and transport. In \cite{Alvarez16,Alvarez20}, a primal mixed finite element method in conjunction with vorticity-velocity-pressure formulation is used for the discretization of the Brinkman-Darcy flow and a conforming finite element method is used for the discretization of the nonlinear transport equation. In the works presented in \cite{Zhang18,Ervin19}, the authors are devoted to the analysis of partitioned time stepping methods for the conforming discretizations on the two subdomains. In addition to the aforementioned works, there are also a surge of works that have been dedicated to the devising and analysis of numerical schemes for the Stokes-Darcy flow and/or transport, see, e.g., \cite{Layton02,Dawson04,Riviere05,Chidyagwai09,Ervin09,Gatica11,Lipnikov14,Lee16,Fu18,Zhao20}.

There are some typical difficulties in the devising of an efficient and accurate scheme for the coupled Brinkman-Darcy flow and transport problem considered in this paper. In a nutshell, the Brinkman equations model both the Stokes problem and the Darcy problem in porous media, and the devising of a uniformly stable scheme for both the Stokes problem and the Darcy problem is challenging due to the different inherent natures of these two equations. It becomes even more challenging for the coupled Brinkman-Darcy flow as one also needs to balance the Brinkman problem and Darcy problem with a suitable treatment for the interface conditions. The way to enforce the interface conditions also needs a careful design.
Another typical issue posed is the mass conservation, which is of great importance in the context of transport equation. The devising of a numerical scheme which can overcome the aforementioned difficulties is a challenging task and it hinges on a dedicate balancing of the finite element spaces used. Classical techniques that have been developed to overcome the difficulties encountered in the design of uniformly robust schemes for the Brinkman problem include nonconforming methods with $H(\tdiv;\Omega)$-conforming velocity \cite{Mardal02,Guzman12}, $H(\tdiv;\Omega)$-conforming discontinuous Galerkin methods \cite{Konno11,Konno12} and parameter free $H(\tdiv;\Omega)$-conforming HDG methods \cite{FuQiu18}. A common ingredient shared by these methods is to relax the tangential continuity of velocity; indeed, a $H(\tdiv;\Omega)$-conforming space is employed to approximate the velocity. An alternative approach is to modify the right hand side of classical finite element methods by using divergence-free velocity reconstruction operator, and a lot of works has been developed in this direction, see, for example, \cite{Linke12,Linke14,Lederer17,Frerichs20,Mu20,Apel21}. Devising a uniformly robust scheme for the coupled Brinkman-Darcy flow and meanwhile preserving the global mass conservation of the method without resorting to additional variables are not easy tasks, and one must carefully design the approximation spaces so that the interface conditions can be incorporated into the discrete formulation naturally.

Therefore, the purpose of this paper is to devise and analyze a strongly mass conservative scheme of arbitrary polynomial orders for the coupled Brinkman-Darcy flow and transport. The Brinkman equations are discretized by using a carefully designed staggered DG method in conjunction with velocity gradient-velocity-pressure formulation and the Darcy equations are discretized by using mixed finite element method, this choice of spaces makes our scheme capable of handling the interface conditions without resorting to additional variables. More precisely, the interface conditions can be imposed into the discrete formulation naturally by replacing the Brinkman's normal velocity by the Darcy's normal velocity.
It should be noted that the choice of the spaces for the Brinkman equation should be carefully designed so that the resulting scheme is uniformly robust for various feasible values of viscosity. The key ingredient is to use a locally $H(\tdiv;\Omega)$-conforming space to approximate the Brinkman velocity. The transport equation is discretized by using an upwinding staggered DG method, where the boundary correction terms are exploited to improve the stability. The proposed scheme possesses many desirable features, which makes it attractive. First, it is globally mass conservative and the interface conditions can be imposed without resorting to additional variables. Second, it is uniformly robust for various feasible values of viscosity. Third, the normal continuity of velocity is satisfied exactly at the discrete level. Fourth, no penalty term or stabilization term is needed, which is advantageous over other DG methods since choosing a suitable stabilization parameter could be tricky for certain situation. A rigorous convergence analysis is carried out for the Brinkman-Darcy flow equations. In addition, we also analyze the stability and convergence error estimates for the concentration and the diffusive flux in the transport equation, where the stability estimate depends only on the true velocity on the inflow boundary rather than on the approximated velocity. The error results from a combination of the upwinding staggered DG discretization error and the error from the discretization of the Brinkman-Darcy velocity. The resulting convergence error estimates for the transport equation are of order $\mathcal{O}(h^{k+1})$, where $k$ is the polynomial order used for the discretization. To the best of our knowledge, our proposed method appears to be the first in the literature, that offers a robust behavior with respect to viscosity for the coupling of Brinkman-Darcy flow and transport without resorting to additional variables to enforce the interface conditions.

The rest of the paper is organized as follows. In the next section, we present the model flow-transport problem. The flow discretization is given in Section~\ref{sec:flow}, and the corresponding convergence error estimates are provided in Section~\ref{sec:error}. The transport discretization and its error analysis are presented in Section~\ref{sec:transport}. Several numerical experiments are presented in Section~\ref{sec:numerical}. Finally, the conclusions are given in Section~\ref{sec:conclusion}.

\section{Model problem}

In our model we consider a fluid region $\Omega_B\subset \mathbb{R}^2$ in which the flow is governed by the Brinkman equations \eqref{eq:Brinkman1}-\eqref{eq:Brinkman3} and a porous medium $\Omega_D\subset \mathbb{R}^2$ in which Darcy flow equations \eqref{eq:Darcy1}-\eqref{eq:Darcy2} hold. These two regions are separated by an interface $\Gamma$, through which exchange of fluid velocities and pressures occurs; see Figure~\ref{fig:domain} for an illustration of the computational domain.
The flow model reads as follows:
\begin{alignat}{2}
\epsilon^{-1}\bm{L}&=\nabla \bm{u}_B&&\quad \mbox{in}\;\Omega_B\label{eq:Brinkman1},\\
- \nabla \cdot \bm{L}+\alpha \bm{u}_B+\nabla p_B& = \bm{f}_B&&\quad\mbox{in}\;\Omega_B\label{eq:Brinkman2},\\
\nabla \cdot \bm{u}_B&=0&&\quad \mbox{in}\;\Omega_B\label{eq:Brinkman3}
\end{alignat}
and
\begin{alignat}{2}
\bm{u}_D+K_D\nabla p_D&= \bm{f}_D&&\quad \mbox{in}\;\Omega_D\label{eq:Darcy1},\\
\nabla\cdot \bm{u}_D&=f&&\quad \mbox{in}\;\Omega_D\label{eq:Darcy2}.
\end{alignat}
Here $\epsilon$ is the effective viscosity constant, the inverse of $\alpha$ is the permeability tensor constant divided by viscosity and $K_D$ is a symmetric and positive definite permeability tensor divided by viscosity (which is also called hydraulic conductivity). $\bm{f}_B\in L^2(\Omega_B)^2$, $\bm{f}_D\in L^2(\Omega_D)^2$ and $f\in L^2(\Omega_D)$ are given data.

We assume that $\Omega$ has a Lipschitz continuous boundary split into two disjoint sub-boundaries with positive measure, i.e., $\partial \Omega=\Gamma_B\cup \Gamma_D$, where $\Gamma_B=\partial \Omega_B\backslash \Gamma$ and $\Gamma_D=\partial \Omega_D\backslash \Gamma$.
Following \cite{Alvarez20}, we adopt the following interface conditions:
\begin{alignat}{2}
\bm{u}_D\cdot\bm{n}_B&=\bm{u}_B\cdot\bm{n}_B&&\quad\mbox{on}\;\Gamma,\label{eq:interface1}\\
p_D&=p_B &&\quad\mbox{on}\;\Gamma\label{eq:interface2},
\end{alignat}
where $\bm{n}_B$ denotes the unit outward normal vector to $\Omega_B$. Similarly, we use $\bm{n}_D$ to represent the unit outward normal vector to $\Omega_D$.
To close the system, we define the following boundary conditions
\begin{align*}
\bm{L}\bm{n}_B =\bm{0}\quad \mbox{on}\;\Gamma_B\cup \Gamma,\quad \bm{u}_B\cdot\bm{n}_B=g_1\quad \mbox{on}\;\Gamma_B, \quad \bm{u}_D\cdot\bm{n}_D=g_2\quad \mbox{on}\;\Gamma_D.
\end{align*}
In addition, we require $\int_{\Omega_B}p_B\;dx=0$ to ensure the unique solvability.

%
The Brinkman-Darcy flow system is coupled with the transport equation in $\Omega=\Omega_B\cup \Omega_D$
\begin{align}
\phi c_t+\nabla\cdot (c\bm{u}-K\nabla c)=\phi s+\hat{c} f^+ - c f^-\quad \forall (x,t)\in \Omega\times(0,T),\label{eq:modeltransport}
\end{align}
where $T$ is the final simulation time, $c(x,t)$ is the concentration of a certain chemical component of interest, $0<\phi_*\leq\phi(x)\leq\phi^*$ is the porosity of the medium in $\Omega_D$ (it is set to 1 in $\Omega_B$), $K(x,t)$ is the diffusion-dispersion tensor assumed to be symmetric and positive definite with smallest and largest eigenvalues $K_{\text{min}}$ and $K_{\text{max}}$, respectively, $s(x,t)$ is a source term, and $\bm{u}$ is the velocity field defined by $\bm{u}\vert_{\Omega_i}=\bm{u}_i,i=B,D$. $\hat{c}$ in the source term is the injected concentration. In addition, we let $f^+=\max\{f,0\}$ and $f^-=\max\{-f,0\}$, it follows $f=f^+-f^-$. We remark that $f$ is only defined for $\Omega_D$ and we can simply take $f=0$ in $\Omega_B$. In general, the diffusion-dispersion tensor can be a function of the Darcy velocity; for simplicity of discussion in this paper, we assume that the diffusion-dispersion tensor is a given value. The model is completed by the initial condition
\begin{align*}
c(x,0)=c^0(x)\quad\forall x\in \Omega
\end{align*}
and the boundary conditions
\begin{align*}
(c\bm{u}-K\nabla c)\cdot\bm{n}&=(c_{\text{in}}\bm{u})\cdot\bm{n}\quad\mbox{on}\;\Gamma_{\textnormal{in}},\\
(K\nabla c)\cdot\bm{n}&=0\quad\mbox{on}\;\Gamma_{\text{out}}.
\end{align*}
Here, $c_{\text{in}}$ is the inflow concentration, $\Gamma_{\text{in}}:=\{x\in \partial \Omega:\bm{u}\cdot\bm{n}<0\}$ and $\Gamma_{\text{out}}:=\{x\in \partial \Omega:\bm{u}\cdot\bm{n}\geq 0\}$, and $\bm{n}$ is the unit outward normal vector to $\partial \Omega$.

\begin{figure}[t]
\centering
\includegraphics[width=0.45\textwidth]{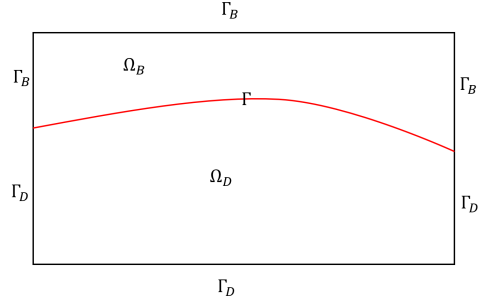}
\caption{The profile of the computational domain.}
\label{fig:domain}
\end{figure}

The following compatibility condition holds
\begin{align*}
\int_{\Gamma_B}g_1\;ds+\int_{\Gamma_D}g_2\;ds=\int_{\Omega_D} f\;dx.
\end{align*}
We can infer from Stokes' theorem that
\begin{align*}
\int_{\Gamma_D}g_2\;ds+\int_{\Gamma} \bm{u}_D\cdot\bm{n}_D\;ds=\int_{\Omega_D} f\;dx,
\end{align*}
thereby, we have
\begin{align}
\int_{\Gamma} \bm{u}_D\cdot\bm{n}_D\;ds=\int_{\Gamma_B}g_1\;ds.\label{eq:uDg1}
\end{align}
Before closing this section, we introduce some notation that will be used throughout the paper. Let $D\subset \mathbb{R}^d,d=1,2$. By $(\cdot,\cdot)_D$, we denote the standard scalar product in $L^2(D):(p,q)_D:=\int_D p \;q\;dx$. When $D$ coincides with $\Omega$, the subscript $\Omega$ will be dropped. We use the same notation for the scalar product in $L^2(D)^2$ and in $L^2(D)^{2\times 2}$. More precisely, $(\bm{\xi},\bm{w})_D:=\sum_{i=1}^2 (\xi^i,w^i)$ for $\bm{\xi},\bm{w}\in L^2(D)^2$ and $(\underline{\psi},\underline{\zeta})_D:=\sum_{i=1}^2\sum_{j=1}^2 (\psi^{i,j},\zeta^{i,j})_D$ for $\underline{\psi},\underline{\zeta}\in L^2(D)^{2\times 2}$. The associated norm is denoted by $\|\cdot\|_{0,D}$.
Given an integer $m\geq 0$ and $n\geq 1$, $W^{m,n}(D)$ and $W_0^{m,n}(D)$ denote the usual Sobolev space provided the norm
and semi-norm $\|v\|_{W^{m,n}(D)}=\{\sum_{|\ell|\leq m}\|D^\ell
v\|^n_{L^n(D)}\}^{1/n}$, $|v|_{W^{m,n}(D)}=\{\sum_{|\ell|=
m}\|D^\ell v\|^n_{L^n(D)}\}^{1/n}$. If $n=2$ we usually write
$H^m(D)=W^{m,2}(D)$ and $H_0^m(D)=W_0^{m,2}(D)$,
$\|v\|_{H^m(D)}=\|v\|_{W^{m,2}(D)}$ and $|v|_{H^m(D)}=|v|_{W^{m,2}(D)}$.
In the sequel, we use $C$ to represent a generic positive constant independent of the mesh size which may have different values at different occurrences.

For $\mathcal{O}\subset \mathbb{R}^2$, we define
\begin{align*}
H(\text{div};\mathcal{O}):=\{\bm{v}\in L^2(\mathcal{O})^2: \nabla \cdot \bm{v}\in L^2(\mathcal{O})\}
\end{align*}
whose norm is given by
\begin{align*}
\|\bm{v}\|_{\text{div},\mathcal{O}}:=\Big(\|\bm{v}\|_{0,\mathcal{O}}^2+\|\nabla \cdot \bm{v}\|_{0,\mathcal{O}}^2\Big)^{\frac{1}{2}}.
\end{align*}
In addition, we define the subspace $H_{0,\Gamma}(\text{div};\mathcal{O})$ by
\begin{align*}
H_{0,\Gamma}(\text{div};\mathcal{O}):=\{\bm{v}\in H(\text{div};\mathcal{O}); \bm{v}\cdot \bm{n}=0\;\textnormal{on}\;\Gamma\}.
\end{align*}
For $0\leq s<\infty$, we let
\begin{align*}
H^s(\text{div};\mathcal{O}):=\{\bm{v}\in L^2(\mathcal{O})^2 \;|\; \nabla \cdot \bm{v}\in H^s(\mathcal{O})\}.
\end{align*}
The following space is also defined for later use
\begin{align*}
L^2_0(\mathcal{O}):=\{q\in L^2(\mathcal{O}); \int_\mathcal{O} q\;dx=0\}
\end{align*}
and for $\omega\subset \mathbb{R}$, we define
\begin{align*}
H^1_{0,\omega}(\Omega_i):=\{q\in H^1(\Omega_i); q=0\;\mbox{on}\;\omega\}.
\end{align*}

\section{The new scheme for Brinkman-Darcy flow}\label{sec:flow}

In this section, we will derive the discrete formulation for the coupled Brinkman-Darcy flow. The proposed method should be uniformly robust with respect to viscosity and stabilizer free. The key idea lies in a delicate balancing of the finite element spaces involved.


First, we introduce the meshes and the spaces exploited in the definition of the new scheme. To simplify the presentation,
 we employ the same types of meshes for $\Omega_B$ and $\Omega_D$. Following \cite{Zhao2018,ZHAO2019}, we first let $\mathcal{T}_{u,i}$ ($i=B,D$) be the
initial partition of the domain $\Omega_i$ into non-overlapping triangular or quadrilateral meshes. We require that $\mathcal{T}_{u,i}$ be aligned with $\Gamma$.
We let $\mathcal{F}_{pr,i}$ be the set of all edges excluding the interface edges in the initial partion $\mathcal{T}_{u,i}$ and $\mathcal{F}_{pr,i}^{0}\subset \mathcal{F}_{pr,i}$ be the
subset of all interior edges of $\Omega_i$. In addition, we use $\mathcal{F}_{h,\Gamma}$ to represent the set of edges lying on the interface $\Gamma$.
For each primal element $E$ in the initial partition $\mathcal{T}_{u,i}$, we select an interior point $\nu$ and
create new edges by
connecting $\nu$ to all the vertices of the primal element. For simplicity, we select $\nu$ as the center point.
This process will divide $E$ into the union of subtriangles, where the subtriangle is denoted as $\tau$, and we rename
the union of these triangles by $S(\nu)$. We remark that $S(\nu)$ is the triangular or rectangular mesh in the initial partition.
Moreover, we will use $\mathcal{F}_{dl,i}$ to denote the set of all the new edges generated by this subdivision process and
use $\mathcal{T}_{h,i}$ to denote the resulting quasi-uniform triangulation,
on which our basis functions are defined. Here the triangulation $\mathcal{T}_{h,i}$ satisfies standard mesh regularity assumption (cf. \cite{Ciarlet78}) and we define $\mathcal{T}_{h}=\mathcal{T}_{h,B}\cup \mathcal{T}_{h,D}$.
In addition, we let $\mathcal{F}_i:=\mathcal{F}_{pr,i}\cup \mathcal{F}_{dl,i}$, $\mathcal{F}_i^{0}:=\mathcal{F}_{pr,i}^{0}\cup \mathcal{F}_{dl,i}$, $\mathcal{F}_{pr}:=\mathcal{F}_{pr,B}\cup \mathcal{F}_{pr,D}\cup \mathcal{F}_{h,\Gamma}$, $\mathcal{F}_{pr}^0:=\mathcal{F}_{pr,B}^0\cup \mathcal{F}_{pr,D}^0\cup \mathcal{F}_{h,\Gamma}$ and $\mathcal{F}_{dl}:=\mathcal{F}_{dl,B}\cup \mathcal{F}_{dl,D}$. For each triangle
$\tau\in \mathcal{T}_{h,i}$, we let $h_\tau$ be the diameter of
$\tau$ and $h_i=\max\{h_\tau, \tau\in \mathcal{T}_{h,i}\}$, and we define $h=\max\{h_B,h_D\}$. Also, we let $h_e$ denote the length of edge $e\in \mathcal{F}_i$.
This construction is illustrated in Figure~\ref{grid},
where the black solid lines are edges in $\mathcal{F}_{pr,i}$
and the red dotted lines are edges in $\mathcal{F}_{dl,i}$. For each interior edge $e\in \mathcal{F}_{pr,i}^0$, we use $D(e)$ to denote the union of the two triangles in $\mathcal{T}_{h,i}$ sharing the edge $e$,
and for each boundary edge $e\in(\mathcal{F}_{pr,i}\cup \mathcal{F}_{h,\Gamma})\backslash\mathcal{F}_{pr,i}^0$, we use $D(e)$ to denote the triangle in $\mathcal{T}_{h,i}$ having the edge $e$,
see Figure~\ref{grid}.

For each edge $e$, we define
a unit normal vector $\bm{n}_{e}$ as follows: If $e\in \mathcal{F}_i\setminus \mathcal{F}_i^{0}$, then
$\bm{n}_{e}$ is the unit normal vector of $e$ pointing towards the outside of $\Omega_i$. If $e\in \mathcal{F}_i^{0}$, an
interior edge, we then fix $\bm{n}_{e}$ as one of the two possible unit normal vectors on $e$.
When there is no ambiguity,
we use $\bm{n}$ instead of $\bm{n}_{e}$ to simplify the notation.
For $k\geq 1$, $\tau\in \mathcal{T}_h$ and $e\in \mathcal{F}_h$, we define $P^k(\tau)$ and $P^k(e)$ as the spaces of polynomials of degree up to order $k$ on $\tau$ and $e$, respectively. For a scalar or vector function $v$ belonging to the broken Sobolev space, its jump and average on $e\in
\mathcal{F}_{i}$ are defined as
\begin{equation*}
\jump{v}_e:=v_{1}-v_{2},\quad \avg{v}_e:=\frac{v_{1}+v_{2}}{2},
\end{equation*}
where $v_j=v_{\tau_j},j=1,2$ and $\tau_{1}$, $\tau_{2}$ are the
two triangles in $\mathcal{T}_{h,i}$ having the edge $e$. For the boundary edges, i.e., edges belong to $\Gamma\cup \partial \Omega$, we simply define $\jump{v}_e=v_{1}$ and $\avg{v}_e=v_{1}$. We can omit the subscript $e$ when it is clear which edge we are referring to. In the following, we use $\nabla_h$ and $\tdiv_h$ to represent the element-wise defined gradient and divergence operators.

\begin{figure}[t]
\centering
\includegraphics[width=0.45\textwidth]{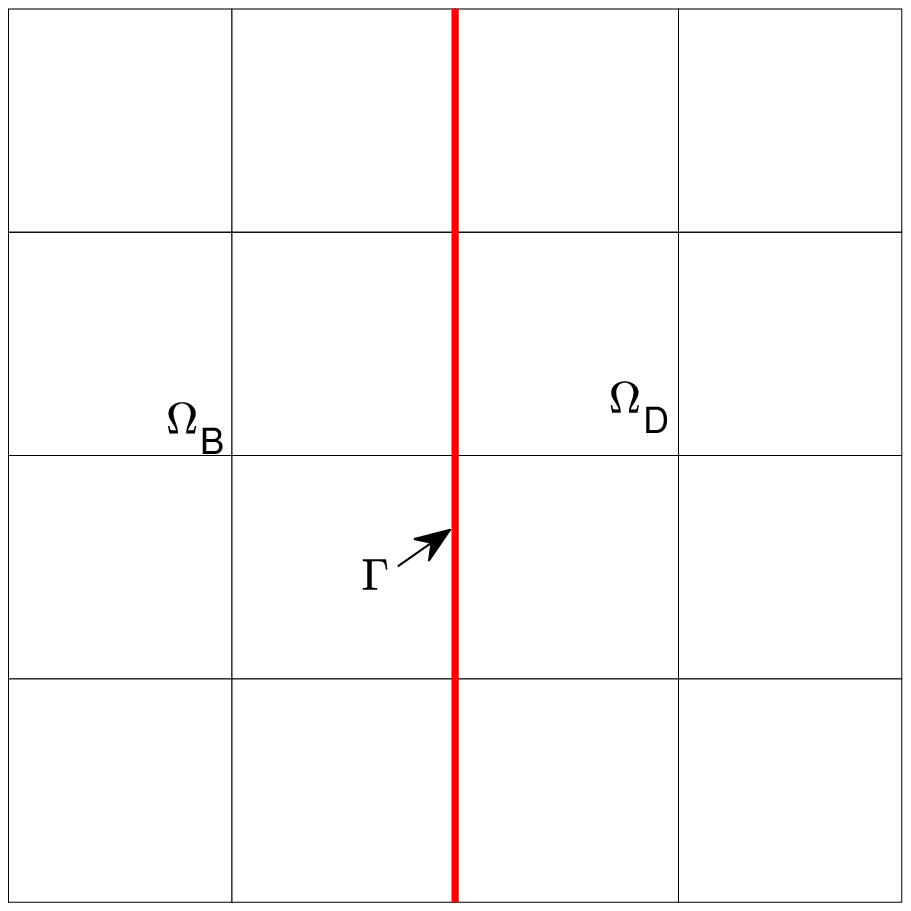}
\includegraphics[width=0.45\textwidth]{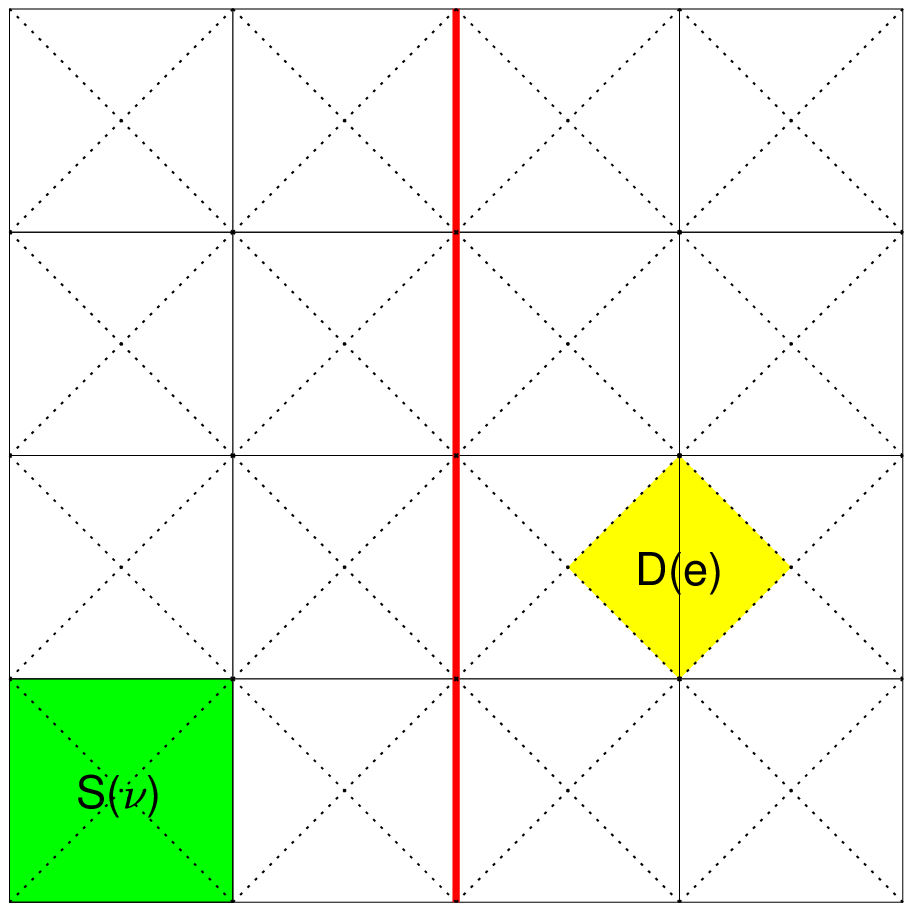}
\caption{Schematics of the meshes. Primal meshes (left), dual meshes and simplicial meshes (right). The solid lines represent the primal edges and the dashed lines represent the dual edges.}
\label{grid}
\end{figure}

Now we are ready to define the finite element spaces that will be used for the numerical approximation. First, the locally $H(\text{div};\Omega_B)$-conforming space for the approximation of $\bm{u}_B$ is defined by
\begin{equation*}
H_h^B:=\{\bm{v} \: : \:  \bs{v} \vert_{\tau} \in P^k(\tau)^2, \forall \tau \in \mathcal{T}_{h,B}; \jump{\bs{v} \cdot \bm{n}}_e=0 ,\forall  e \in \mathcal{F}_{dl,B}\}.
\end{equation*}
The finite dimensional space used for the approximation of $\bm{L}$ is defined by
\begin{align*}
W_h^B:&=\{\bm{G} \: : \:  \bm{G} \vert_{\tau} \in P^k(\tau)^{2\times 2}, \forall \tau \in \mathcal{T}_{h,B}; \jump{\bm{G} \bs{n}}_e=\bm{0},\forall  e\in \mathcal{F}_{pr,B}^0,\\
&\; \jump{(\bm{G}\bm{n})\cdot \bm{t}}_e=0, \forall e\in \mathcal{F}_{dl,B}; \bm{G}\bm{n}=\bm{0}\;\mbox{on}\;\partial \Omega_B\}.
\end{align*}
The locally $H^1(\Omega_B)$-conforming space for the approximation of $p_B$ is defined by
\begin{equation*}
Q_h^B:=\{q \: : \:  q \vert_{\tau} \in P^k(\tau), \forall \tau \in \mathcal{T}_{h,B}; \jump{q}_e=0,\forall e\in \mathcal{F}_{pr,B}^0\}
\end{equation*}
and
\begin{align*}
Q_h^0:=\{q\: : \: q\in Q_h^B;  \int_{\Omega_B} q\;dx=0\}.
\end{align*}

For later analysis, we define the following mesh dependent semi-norm for any $q_B\in Q_h^B$ and $\bm{v}_B\in H_h^B$
\begin{align*}
\|q_B\|_h^2:&=\sum_{e\in \mathcal{F}_{dl,B}}h_e^{-1}\|\jump{q_B}\|_{0,e}^2+\sum_{\tau\in \mathcal{T}_{h,B}}\|\nabla q_B\|_{0,\tau}^2,\\
\|\bm{v}_B\|_{Z}^2:&=\sum_{\tau\in \mathcal{T}_{h,B}}\|\nabla \bm{v}_B\|_{0,\tau}^2+\sum_{e\in \mathcal{F}_{pr,B}^0\cup\mathcal{F}_{h,\Gamma}}h_e^{-1}\|\jump{\bm{v}_B}\|_{0,e}^2+\sum_{e\in \mathcal{F}_{dl,B}}h_e^{-1}\|\jump{(\bm{v}_B\cdot \bm{t})\bm{t}}\|_{0,e}^2.
\end{align*}
Note that $\|q_B\|_h$ is a norm on $Q_h^0$.


Following \cite{ChungEngquist09}, we specify the degrees of freedom for $Q_h^B$ as follows:
    \begin{itemize}
        \item[(SD1)]
            For $e\in \mathcal{F}_{pr,B}\cup \mathcal{F}_{h,\Gamma}$, we have
            \begin{equation}
                \phi_e(q) :=( q,p_k)_e\quad \forall p_k\in P^k(e).\label{eq:dof1}
            \end{equation}
        \item[(SD2)]
            For each $\tau\in \mathcal{T}_{h,B}$, we define
            \begin{equation}
                \phi_\tau(q) := (q,p_{k-1})_\tau\quad \forall p_{k-1}\in P^{k-1}(\tau).\label{eq:dof2}
            \end{equation}
    \end{itemize}

We employ mixed finite element method for the Darcy region. To this end, we define the finite element subspace $H_h^D$ for the approximation of $\bm{u}_D$ by
\begin{align*}
H_h^D:=\{\bm{v}\in H(\text{div};\Omega_D): \bm{v}\vert_\tau\in P^k(\tau)^2,\forall \tau\in \mathcal{T}_{h,D}\},
\end{align*}
where we can take the Brezzi-Douglas-Marini (BDM) space (cf. \cite{Brezzi85}).

The finite element subspace for $p_D$ is given by the piecewise polynomials of degree $k-1$, that is,
\begin{align*}
Q_h^D:=\{q\in L^2(\Omega_D): q \vert_\tau \in P^{k-1}(\tau), \forall \tau\in \mathcal{T}_{h,D}\}.
\end{align*}

In addition, let $\Pi_{h,\Gamma_i},i=B,D$ be the piecewise $L^2$-projection onto $P^k(e)$ for all $e$ belonging to $\Gamma_i$ such that for all $\psi\in L^2(\Gamma_i)$
\begin{align*}
(\psi-\Pi_{h,\Gamma_i}\psi,\chi)_e=0\quad \forall \chi\in P^k(e),e\in \Gamma_i
\end{align*}
and define
\begin{align*}
H_{h,g_2}^D:=\{\bm{v}\in H_h^D,\bm{v}\cdot\bm{n}=\Pi_{h,\Gamma_D}g_2\;\mbox{on}\;\Gamma_D\}.
\end{align*}
Now we are ready to derive the discrete formulation. Multiplying \eqref{eq:Brinkman1} by a test function  $\bm{G}_B\in W_h^B$ and performing integration by parts yield
\begin{align*}
(\epsilon^{-1}\bm{L},\bm{G}_B)_{\Omega_B}&=(\nabla \bm{u}_B,\bm{G}_B)_{\Omega_B}\\
&=\sum_{\tau\in \mathcal{T}_{h,B}}\Big((\bm{G}_B \bm{n}, (\bm{u}_B\cdot \bm{n})\bm{n}+(\bm{u}_B\cdot \bm{t})\bm{t})_{\partial \tau}-(\text{div}\, \bm{G}_B, \bm{u}_B)_\tau\Big)\\
&=\sum_{e\in \mathcal{F}_{dl,B}}(\jump{\bm{G}_B \bm{n}}, (\bm{u}_B\cdot \bm{n})\bm{n})_e-(\tdiv_h \bm{G}_B, \bm{u}_B)_{\Omega_B},
\end{align*}
where we use the decomposition $\bm{u}=(\bm{u}\cdot\bm{n})\bm{n}+(\bm{u}\cdot\bm{t})\bm{t}$ and the facts that $[(\bm{G}_B\bm{n})\cdot\bm{t}]\mid_e=0,\forall e\in \mathcal{F}_{dl,B}$, $[\bm{G}_B\bm{n}]\mid_e=\bm{0},\forall e\in\mathcal{F}_{pr,B}^0 $ and $\bm{G}_B\bm{n}_B=\bm{0}$ on $\partial \Omega_B$.

Multiplying \eqref{eq:Brinkman2} by a test function $\bm{v}_B\in H_h^B$ and performing integration by parts lead to
\begin{align*}
&- (\text{div}\, \bm{L},\bm{v}_B)_{\Omega_B}+ (\alpha\bm{u}_B,\bm{v}_B)_{\Omega_B}+(\nabla p_B,\bm{v}_B)_{\Omega_B}\\
& =-\sum_{e\in \mathcal{F}_{pr,B}^0}(\bm{L} \bm{n}, \jump{\bm{v}_B})_e-\sum_{e\in \mathcal{F}_{dl,B}}((\bm{L} \bm{n})\cdot\bm{t}, \jump{\bm{v}_B\cdot \bm{t}})_e+ (\bm{L},\nabla_h \bm{v}_B)_{\Omega_B}\\
&\;+ (\alpha\bm{u}_B,\bm{v}_B)_{\Omega_B}+\sum_{e\in \mathcal{F}_{pr,B}}(\jump{\bm{v}_B\cdot \bm{n}}, p_B)_e+\sum_{e\in \mathcal{F}_{h,\Gamma}}(\bm{v}_B\cdot \bm{n}, p_B)_e-(p_B,\tdiv_h  \bm{v}_B)_{\Omega_B}=(\bm{f}_B,\bm{v}_B)_{\Omega_B}.
\end{align*}
Multiplying \eqref{eq:Brinkman3} by a test function $q_B\in Q_h^0$ and performing integration by parts yield
\begin{align*}
(\nabla \cdot \bm{u}_B,q_B)_{\Omega_B} &=(\bm{u}_B\cdot \bm{n}_B, q_B)_{\Gamma_B}+\sum_{e\in \mathcal{F}_{h,\Gamma}}(\bm{u}_B\cdot \bm{n}_B, q_B)_e+\sum_{e\in \mathcal{F}_{dl,B}}(\bm{u}_B\cdot \bm{n}, \jump{q_B})_e-(\bm{u}_B,\nabla_h q_B)_{\Omega_B}\\
&=(g_1, q_B)_{\Gamma_B}-\sum_{e\in \mathcal{F}_{h,\Gamma}}(\bm{u}_D\cdot \bm{n}_D, q_B)_e+\sum_{e\in \mathcal{F}_{dl,B}}(\bm{u}_B\cdot \bm{n}, \jump{q_B})_e-(\bm{u}_B,\nabla_h q_B)_{\Omega_B}=0,
\end{align*}
where we use the interface condition $\bm{u}_B\cdot \bm{n}_B=\bm{u}_D\cdot \bm{n}_B$ (cf. \eqref{eq:interface1}) in the second equality.

Multiplying \eqref{eq:Darcy1} by a test function $\bm{v}_D\in H_{h,0}^D$ and performing integration by parts, we can obtain
\begin{align*}
(K_D^{-1}\bm{u}_D,\bm{v}_D)_{\Omega_D}+(\nabla p_D,\bm{v}_D)_{\Omega_D}&=(K_D^{-1}\bm{u}_D,\bm{v}_D)_{\Omega_D}+\sum_{e\in \mathcal{F}_{h,\Gamma}}(\bm{v}_D\cdot \bm{n}, p_D)_e-(p_D,\tdiv \bm{v}_D)_{\Omega_D}\\
&=(K_D^{-1}\bm{u}_D,\bm{v}_D)_{\Omega_D}+\sum_{e\in \mathcal{F}_{h,\Gamma}}(\bm{v}_D\cdot \bm{n}_D, p_B)_e-(p_D,\tdiv \bm{v}_D)_{\Omega_D}\\
&=(\bm{f}_D,\bm{v}_D)_{\Omega_D},
\end{align*}
where we use $p_B=p_D$ (cf. \eqref{eq:interface2}) in the second equality.

Based on the above derivations, we define the following bilinear forms for brevity
\begin{align*}
B_h^*(\bm{u}_{B,h},\bm{G}_B)&=\sum_{e\in \mathcal{F}_{dl,B}}(\jump{\bm{G}_B \bm{n}}, (\bm{u}_{B,h}\cdot \bm{n})\bm{n})_e-(\tdiv_h \bm{G}_B, \bm{u}_{B,h})_{\Omega_B},\\
B_h(\bm{L}_h,\bm{v}_B)&=- \sum_{e\in \mathcal{F}_{pr,B}^0}(\bm{L}_h \bm{n}, \jump{\bm{v}_B})_e-\sum_{e\in \mathcal{F}_{dl,B}}((\bm{L}_h \bm{n})\cdot\bm{t}, \jump{\bm{v}_B\cdot \bm{t}})_e+(\bm{L}_h,\nabla_h \bm{v}_B)_{\Omega_B},\\
b_h^*(p_{B,h},\bm{v}_B)&=\sum_{e\in \mathcal{F}_{pr,B}}(\jump{\bm{v}_B\cdot \bm{n}}, p_{B,h})_e+\sum_{e\in \mathcal{F}_{h,\Gamma}}(\bm{v}_B\cdot \bm{n}, p_{B,h})_e-(p_{B,h},\tdiv_h \bm{v}_B)_{\Omega_B},\\
b_h(\bm{u}_{B,h},q_B)&=-\sum_{e\in \mathcal{F}_{dl,B}}(\bm{u}_{B,h}\cdot \bm{n}, \jump{q_B})_e+(\bm{u}_{B,h},\nabla_h q_B)_{\Omega_B},\\
A_h(\bm{v}_D,p_{D,h})&=(p_{D,h},\nabla \cdot \bm{v}_D)_{\Omega_D},\\
I_h(p_{B,h},\bm{v}_D)&=\sum_{e\in \mathcal{F}_{h,\Gamma}}(\bm{v}_D\cdot \bm{n}_D,p_{B,h})_e.
\end{align*}


We are ready to propose the following discrete formulation for the coupled Brinkman-Darcy system \eqref{eq:Brinkman1}-\eqref{eq:interface2}: Find $(\bm{L}_{h},\bm{u}_{B,h},p_{B,h})\in W_h^B\times H_h^B\times Q_h^0$ and $(\bm{u}_{D,h},p_{D,h})\in H_{h,g_2}^D\times Q_h^D$ such that
\begin{align}
(\epsilon^{-1}\bm{L}_h,\bm{G}_B)_{\Omega_B}-B_h^*(\bm{u}_{B,h},\bm{G}_B)
+(K_D^{-1}\bm{u}_{D,h},\bm{v}_D)_{\Omega_D}&\nonumber\\
\;+I_h(p_{B,h},\bm{v}_D)-A_h(\bm{v}_D,p_{D,h})&=( \bm{f}_D,\bm{v}_D)_{\Omega_D},\label{eq:discreteB1}\\
B_h(\bm{L}_h,\bm{v}_B)+(\alpha\bm{u}_{B,h},\bm{v}_B)_{\Omega_B}
+b_h^*(p_{B,h},\bm{v}_B)+A_h(\bm{u}_{D,h},q_D)&=( \bm{f}_B,\bm{v}_B)_{\Omega_B}+(f,q_D)_{\Omega_D},\label{eq:discreteB2}\\
-I_h(q_B,\bm{u}_{D,h})-b_h(\bm{u}_{B,h},q_B)&=-(g_1,q_B)_{\Gamma_B}\label{eq:discreteB3}
\end{align}
for all $(\bm{G}_B,\bm{v}_{B},q_{B})\in W_h^B\times H_h^B\times Q_h^0$ and $(\bm{v}_{D},q_{D})\in H_{h,0}^D\times Q_h^D$. Hereafter, $\bm{u}_h$ is the velocity field defined by $\bm{u}_h\vert_{\Omega_i}=\bm{u}_{i,h},i=B,D$.
%

We introduce some properties that will be used later. First, integration by parts implies the following adjoint properties
\begin{alignat}{2}
B_h(\bm{G}_B,\bm{v}_B)&=B_h^*(\bm{v}_B,\bm{G}_B)&&\quad \forall (\bm{G}_B,\bm{v}_B)\in W_h^B\times H_h^B,\label{eq:adjoint1}\\
b_h(\bm{v}_B,q_B)&=b_h^*(q_B,\bm{v}_B)&&\quad \forall (\bm{v}_B, q_B)\in H_h^B\times Q_h^B.\label{eq:adjoint2}
\end{alignat}
Following \cite{ChungEngquist09}, we have the following inf-sup condition
\begin{align}
\|q_B\|_h\leq C_{\textnormal{inf}}\sup_{\bm{v}_{B}\in H_h^B}\frac{b_h(\bm{v}_{B},q_B)}{\|\bm{v}_{B}\|_{0,\Omega_B}}\quad \forall q_B\in Q_h^0.\label{eq:inf-sup}
\end{align}

Next, we introduce some interpolation operators that will be useful for the convergence analysis.
We define the projection operator $\Pi^{\text{BDM}}: H(\text{div};\Omega_D)\cap L^p(\Omega_D)\rightarrow H_h^D, p>2$ by following \cite{Brezzi85}
\begin{align*}
((\bm{v}-\Pi^{\text{BDM}}\bm{v})\cdot \bm{n},p_k)_e&=0\quad \forall p_k\in P^k(e) \;\text{and each edge}\; e\subset \partial \tau,\tau\in \mathcal{T}_{h,D},\\
(\bm{v}-\Pi^{\text{BDM}}\bm{v},\nabla p_{k-1})_\tau&=0\quad \forall p_{k-1}\in P^{k-1}(\tau),\tau\in \mathcal{T}_{h,D},\\
(\bm{v}-\Pi^{\text{BDM}}\bm{v},\mbox{curl}\, b)_\tau&=0\quad \forall b\in B^{k+1}(\tau),\tau\in \mathcal{T}_{h,D},
\end{align*}
where $B^{k+1}(\tau)=\{p\in P^{k+1}(\tau);\; p\mid_{\partial \tau}=0\}=\lambda_1\lambda_2\lambda_3 P^{k-2}(\tau)$. Here $\lambda_i,\;i=1,2,3$ are the barycentric coordinates of $\tau$.

It satisfies the following commutative properties
\begin{align*}
\nabla \cdot\Pi^{\text{BDM}}=\mathbb{P}_h\nabla \cdot,
\end{align*}
where $\mathbb{P}_h$ is the $L^2$-orthogonal projection onto $Q_h^D$.
The following convergence error estimates hold (see, e.g., \cite{Brezzi85,Duran88})
\begin{align}
\|\bm{v}-\Pi^{\text{BDM}}\bm{v}\|_{0,\Omega_D}&\leq C h^{k+1}\|\bm{v}\|_{k+1,\Omega_D}\quad \forall \bm{v}\in H^{k+1}(\Omega_D)^2,\label{eq:BDM}\\
\|\nabla \cdot (\bm{v}-\Pi^{\text{BDM}}\bm{v})\|_{0,\Omega_D}&\leq C h^{k}\|\nabla \cdot\bm{v}\|_{k,\Omega_D}\quad\; \forall \bm{v}\in H^{k}(\text{div};\Omega_D),\label{eq:divBDM}\\
\|q-\mathbb{P}_h q\|_{0,\Omega_D}&\leq C h^{k+1}\|q\|_{k+1,\Omega_D}\quad \forall q\in H^{k+1}(\Omega_D)\label{eq:Ph}.
\end{align}
In addition, we define a projection operator $\Pi_h$ for $W_h^B$ following \cite{ZhaoChungLam20}, which satisfies
\begin{align}
B_h(\Pi_h \bm{L}-\bm{L},\bm{v})=0\quad \forall \bm{v}\in H_h^B\label{eq:Bh1}
\end{align}
and the following interpolation error estimate holds
\begin{align}
\|\bm{L}-\Pi_h\bm{L}\|_{0,\Omega_B}&\leq C h^{k+1}|\bm{L}|_{k+1,\Omega_B}.\label{eq:Lerror}
\end{align}

To facilitate later analysis, we also define the following two projection operators (cf. \cite{LinaParkhybrid20}).
Let $I_h: H^1(\Omega_B)\rightarrow Q_h^B$ be defined by
\begin{equation}
\begin{split}
	(I_h q-q,\phi)_e
		&=0 \quad \forall \phi\in P^k(e),\forall e\in \mathcal{F}_{pr,B}\cup \mathcal{F}_{h,\Gamma},\\
	(I_hq-q,\phi)_\tau
		&=0\quad \forall \phi\in P^{k-1}(\tau),\forall \tau\in \mathcal{T}_{h,B}
\end{split}
\label{eq:Ih}
\end{equation}
and $J_h: L^2(\Omega_B)^2\cap H^{1/2+\delta}(\Omega_B)^2\rightarrow H_h^B$, $\delta>0$ be defined by
\begin{equation}
\begin{split}
	((J_h\bm{v}-\bm{v})\cdot\bm{n},\varphi)_e
		&=0\quad \forall \varphi\in P^{k}(e),\forall e\in \mathcal{F}_{dl,B},\\
	(J_h\bm{v}-\bm{v}, \bm{\phi})_\tau
		&=0\quad \forall \bm{\phi}\in P^{k-1}(\tau)^2, \forall \tau\in \mathcal{T}_{h,B}.
\end{split}
\label{eq:Jh}
\end{equation}
It is easy to see that $I_h$ and $J_h$ are well defined polynomial preserving operators. In addition, the following approximation properties hold for $q\in H^{k+1}(\Omega_B)$ and $\bm{v}\in H^{k+1}(\Omega_B)^2$ (cf. \cite{Ciarlet78,ChungEngquist09})
\begin{align}
\|q-I_hq\|_{0,\Omega_B}&\leq C h^{k+1}|q|_{k+1,\Omega_B},\label{eq:Perror}\\
\|\bm{v}-J_h\bm{v}\|_{0,\Omega_B}&\leq C h^{k+1}|\bm{v}|_{k+1,\Omega_B}\label{eq:uerror}.
\end{align}
By the definitions of $I_h$ and $J_h$, it readily holds
\begin{align}
b_h^*(p_B-I_hp_B,\bm{v})&=0\quad \forall \bm{v}\in H_h^B,\label{eq:bh1}\\
b_h(\bm{u}_B-J_h\bm{u}_B,q)&=0 \quad \forall q\in Q_h^B,\label{eq:bh2}\\
B_h^*(\bm{u}_B-J_h\bm{u}_B,\bm{G})&=0\quad \forall \bm{G}\in W_h^B.\label{eq:Bh2}
\end{align}
Following \cite{ZhaoChungLam20}, we have for any $\bm{v}\in H^1(\Omega_B)^2$
\begin{align}
\|J_h\bm{v}\|_{Z}&\leq C\|\bm{v}\|_{1,\Omega_B},\label{eq:Jhv}\\
\|J_h\bm{v}\|_{0,\Omega_B}&\leq C\|\bm{v}\|_{1,\Omega_B}.\label{eq:Jhv2}
\end{align}
To verify the mass conservation of the proposed scheme, we first show that $\int_{ \Gamma}\bm{u}_{D,h}\cdot\bm{n}_D\;ds=\int_{ \Gamma}\bm{u}_{D}\cdot\bm{n}_D\;ds$. Note that
\begin{align*}
\int_{\Omega_D}\nabla\cdot\bm{u}_{D,h}\;dx=\int_{\Omega_D}f\;dx,
\end{align*}
which yields
\begin{align*}
\int_{ \Gamma_D}\bm{u}_{D,h}\cdot\bm{n}_D\;ds+\int_{ \Gamma}\bm{u}_{D,h}\cdot\bm{n}_D\;ds=\int_{\Omega_D}f\;dx.
\end{align*}
Hence,
\begin{align*}
\int_{ \Gamma_D}\bm{u}_{D,h}\cdot\bm{n}_D\;ds+\int_{ \Gamma}\bm{u}_{D,h}\cdot\bm{n}_D\;ds=\int_{ \Gamma_D}g_2\;ds+\int_{ \Gamma}\bm{u}_{D}\cdot\bm{n}_D\;ds,
\end{align*}
which implies
\begin{align}
\int_{ \Gamma}\bm{u}_{D,h}\cdot\bm{n}_D\;ds=\int_{ \Gamma}\bm{u}_{D}\cdot\bm{n}_D\;ds.\label{eq:uDh}
\end{align}
We remark that the property \eqref{eq:uDh} is crucial for the proof of the mass conservation. For simplicity, we assume that $f$ is a polynomial function hereafter and belongs to $Q_h^D$.

\begin{lemma}(strong mass conservation).
The interface condition \eqref{eq:interface1} is satisfied exactly for the discrete solution, i.e.,
\begin{align*}
\bm{u}_{B,h}\cdot\bm{n}_D=\bm{u}_{D,h}\cdot\bm{n}_D \quad\textnormal{on}\;\Gamma.
\end{align*}
In addition, $\bm{u}_{h}\in H(\textnormal{div};\Omega)$, $\nabla \cdot \bm{u}_{B,h}=0$ and $\nabla\cdot\bm{u}_{D,h}=f$. It holds
\begin{align*}
\bm{u}_{B,h}\cdot\bm{n}_B=\Pi_{h,\Gamma_B}g_1\quad \textnormal{on}\;\Gamma_B
\end{align*}
and
\begin{align}
\nabla \cdot (\bm{u}-\bm{u}_h)=0\quad \textnormal{in}\;\Omega\label{eq:massconservation}.
\end{align}

\end{lemma}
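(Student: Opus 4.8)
The plan is to read off every asserted identity from the third discrete equation \eqref{eq:discreteB3}, after converting the element-wise divergence by integration by parts, and to treat the Darcy divergence separately from \eqref{eq:discreteB2}. First I would dispose of $\tdiv_h\bm{u}_{D,h}=f$: taking $\bm{v}_B=\bm{0}$ in \eqref{eq:discreteB2} gives $(\nabla\cdot\bm{u}_{D,h},q_D)_{\Omega_D}=(f,q_D)_{\Omega_D}$ for all $q_D\in Q_h^D$, and since $\nabla\cdot\bm{u}_{D,h}\in Q_h^D$ (the divergence of a BDM field is piecewise $P^{k-1}$) and $f\in Q_h^D$ by assumption, the choice $q_D=\nabla\cdot\bm{u}_{D,h}-f$ forces equality. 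The heart of the argument is a single master identity on the Brinkman side. Applying element-wise integration by parts to $(\tdiv_h\bm{u}_{B,h},q_B)_{\Omega_B}$ and using the normal continuity of $\bm{u}_{B,h}$ on the dual edges $\mathcal{F}_{dl,B}$ to cancel those contributions, I obtain for every $q_B\in Q_h^B$
\[
(\tdiv_h\bm{u}_{B,h},q_B)_{\Omega_B}+b_h(\bm{u}_{B,h},q_B)=\sum_{e\in\mathcal{F}_{pr,B}^0}(\jump{\bm{u}_{B,h}\cdot\bm{n}},q_B)_e+(\bm{u}_{B,h}\cdot\bm{n}_B,q_B)_{\Gamma_B}+\sum_{e\in\mathcal{F}_{h,\Gamma}}(\bm{u}_{B,h}\cdot\bm{n}_B,q_B)_e.
\]
Substituting $b_h(\bm{u}_{B,h},q_B)=(g_1,q_B)_{\Gamma_B}-I_h(q_B,\bm{u}_{D,h})$ from \eqref{eq:discreteB3} (valid for $q_B\in Q_h^0$) and using $\bm{n}_D=-\bm{n}_B$ on $\Gamma$ turns this into
\[
(\tdiv_h\bm{u}_{B,h},q_B)_{\Omega_B}=\sum_{e\in\mathcal{F}_{pr,B}^0}(\jump{\bm{u}_{B,h}\cdot\bm{n}},q_B)_e+(\bm{u}_{B,h}\cdot\bm{n}_B-g_1,q_B)_{\Gamma_B}+\sum_{e\in\mathcal{F}_{h,\Gamma}}(\bm{u}_{B,h}\cdot\bm{n}_B+\bm{u}_{D,h}\cdot\bm{n}_D,q_B)_e,
\]
valid for all $q_B\in Q_h^0$.

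The next step is to decouple this identity using the degrees of freedom (SD1)--(SD2). The left-hand side depends on $q_B$ only through its interior moments (SD2), because $\tdiv_h\bm{u}_{B,h}$ is piecewise $P^{k-1}$ so that $(\tdiv_h\bm{u}_{B,h},q_B)_\tau$ equals the pairing with the $P^{k-1}(\tau)$-projection of $q_B$; each edge term on the right depends on $q_B$ only through its edge moment (SD1), since $\jump{\bm{u}_{B,h}\cdot\bm{n}}$, $\bm{u}_{B,h}\cdot\bm{n}_B$ and $\bm{u}_{D,h}\cdot\bm{n}_D$ all lie in $P^k(e)$. I would first take $q_B$ with all interior moments vanishing; such a $q_B$ automatically satisfies $\int_{\Omega_B}q_B=0$, hence $q_B\in Q_h^0$ and the left-hand side vanishes. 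Since the edge moments on distinct edges of $\mathcal{F}_{pr,B}^0\cup\Gamma_B\cup\mathcal{F}_{h,\Gamma}$ may be prescribed independently (unisolvence of (SD1)--(SD2)), isolating one edge at a time yields $\jump{\bm{u}_{B,h}\cdot\bm{n}}=0$ on every $e\in\mathcal{F}_{pr,B}^0$, $\bm{u}_{B,h}\cdot\bm{n}_B=\Pi_{h,\Gamma_B}g_1$ on $\Gamma_B$, and $\bm{u}_{B,h}\cdot\bm{n}_B+\bm{u}_{D,h}\cdot\bm{n}_D=0$ on $\Gamma$, the last being exactly the discrete version of \eqref{eq:interface1}. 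Combining the primal-edge continuity with the normal continuity on $\mathcal{F}_{dl,B}$ built into $H_h^B$ and with the just-proved interface matching, together with $\bm{u}_{D,h}\in H(\tdiv;\Omega_D)$, gives $\bm{u}_h\in H(\tdiv;\Omega)$.

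It remains to show $\tdiv_h\bm{u}_{B,h}=0$. Returning to the master identity, I would now take $q_B$ with vanishing edge moments and free interior moments; then all three edge terms drop and $(\tdiv_h\bm{u}_{B,h},q_B)_{\Omega_B}=0$. As the interior moments range over all admissible values subject only to the single constraint $\int_{\Omega_B}q_B=0$ inherited from $Q_h^0$, a duality argument forces $\tdiv_h\bm{u}_{B,h}$ to be a global constant $\lambda$. To pin $\lambda=0$ I would integrate over $\Omega_B$ using $\bm{u}_h\in H(\tdiv;\Omega)$: the divergence theorem gives $\lambda|\Omega_B|=\int_{\Gamma_B}\bm{u}_{B,h}\cdot\bm{n}_B\,ds+\int_{\Gamma}\bm{u}_{B,h}\cdot\bm{n}_B\,ds$, and the mean-preserving identity $\bm{u}_{B,h}\cdot\bm{n}_B=\Pi_{h,\Gamma_B}g_1$ together with the interface identity, \eqref{eq:uDh} and \eqref{eq:uDg1} reduces this to $\int_{\Gamma_B}g_1\,ds-\int_{\Gamma_B}g_1\,ds=0$; hence $\lambda=0$ and $\tdiv_h\bm{u}_{B,h}=0$. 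Finally, since the continuous solution satisfies $\nabla\cdot\bm{u}_B=0$, $\nabla\cdot\bm{u}_D=f$ and $\bm{u}\in H(\tdiv;\Omega)$, and $\bm{u}_h$ shares all three, the difference carries no interface jump and $\nabla\cdot(\bm{u}-\bm{u}_h)=0$ in $\Omega$, which is \eqref{eq:massconservation}.

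The main obstacle is the decoupling step. Because $H_h^B$ enforces normal continuity only on the dual edges, $\bm{u}_{B,h}$ is merely locally $H(\tdiv)$-conforming, so $\tdiv_h\bm{u}_{B,h}$ and the primal-edge normal jumps are genuinely present and must be separated cleanly. The delicate point is to verify that the left-hand side of the master identity sees only the interior moments while the right-hand side sees only the edge moments, and that inside $Q_h^0$ these two families of moments vary independently apart from the single mean-zero constraint; this is what legitimizes reading off each discrete identity separately. Pinning the divergence constant $\lambda$ to zero is the second delicate point, since it is here---and only here---that the global compatibility relations \eqref{eq:uDh} and \eqref{eq:uDg1} enter, rather than any purely local argument.
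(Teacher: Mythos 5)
Your proof is correct, and it rearranges the paper's argument in one genuinely different place: the treatment of the constant mode of the pressure test function. The paper's proof begins by \emph{extending} \eqref{eq:discreteB3} from $Q_h^0$ to all of $Q_h^B$, using \eqref{eq:uDg1} and \eqref{eq:uDh} to verify the equation for constant $q_B$; with the full test space in hand it then makes a single choice of $q_B$ via the degrees of freedom \eqref{eq:dof1}--\eqref{eq:dof2} that matches every residual (interface mismatch, primal-edge jumps, boundary mismatch, and elementwise divergence) simultaneously, so that \eqref{eq:conservation} collapses to a sum of squares equal to zero and all identities, including $\tdiv_h\bm{u}_{B,h}=0$, drop out at once. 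You instead stay inside $Q_h^0$ throughout, exploiting the observation that killing all interior moments (SD2) automatically forces $\int_{\Omega_B}q_B=0$ (valid since $1\in P^{k-1}(\tau)$ for $k\geq 1$), so the edge identities come for free one edge at a time; the price is that the divergence is then only determined modulo a constant $\lambda$, which you pin to zero at the end by the divergence theorem on $\Omega_B$ together with the same compatibility relations \eqref{eq:uDh} and \eqref{eq:uDg1}. The two proofs thus share the backbone (elementwise integration by parts of \eqref{eq:discreteB3} plus unisolvence of (SD1)--(SD2), and the identical argument $q_D=\nabla\cdot\bm{u}_{D,h}-f$ in \eqref{eq:discreteB2} for the Darcy divergence), but the global compatibility enters at opposite ends: up front in the paper, at the end in yours. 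Your version has the pedagogical merit of making explicit \emph{where} the single one-dimensional obstruction lives and why \eqref{eq:uDh} is indispensable exactly there; the paper's version is shorter because the one combined test function avoids the edge-by-edge decoupling and the separate constant-pinning step. One point worth stating explicitly in your write-up: the divergence theorem step requires $\bm{u}_{B,h}\in H(\tdiv;\Omega_B)$, which you have only \emph{after} the primal-edge jump identities are established, so the order of your two decoupling steps (edges first, divergence second) is not optional but forced --- as written you respect this, so the argument is sound.
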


\begin{proof}
First, note that \eqref{eq:discreteB3} holds for any $q_B\in Q_h^B$. Indeed, we have from \eqref{eq:uDg1} and \eqref{eq:uDh}
\begin{align*}
\int_{\Gamma}\bm{u}_{D,h}\cdot\bm{n}_D\;ds=\int_{\Gamma}\bm{u}_{D}\cdot\bm{n}_D\;ds=\int_{\Gamma_B} g_1\;ds,
\end{align*}
which implies that \eqref{eq:discreteB3} holds for any $q_B=c$, where $c$ is a constant.

From \eqref{eq:discreteB3} and the adjoint property \eqref{eq:adjoint2}, we can infer that
\begin{equation}
\begin{split}
&-\sum_{e\in \mathcal{F}_{h,\Gamma}}(\bm{u}_{D,h}\cdot \bm{n}_D,q_{B})_e-\sum_{e\in \mathcal{F}_{pr,B}}(\jump{\bm{u}_{B,h}\cdot \bm{n}}, q_{B})_e-\sum_{e\in \mathcal{F}_{h,\Gamma}}(\bm{u}_{B,h}\cdot \bm{n}_B, q_{B})_e\\
&\;+\sum_{\tau\in \mathcal{T}_{h,B}}(q_{B},\nabla \cdot \bm{u}_{B,h})_{\tau}=-(\Pi_{h,\Gamma_B}g_1,q_B)_{\Gamma_B}\quad \forall q_B\in Q_h^B.
\end{split}
\label{eq:conservation}
\end{equation}
We can take $q_B$ in line with \eqref{eq:dof1}-\eqref{eq:dof2} such that
\begin{alignat*}{2}
(q_B, p_k)_e&=-(\bm{u}_{B,h}\cdot\bm{n}_B-\Pi_{h,\Gamma_B}g_1,p_k)_e&&\quad \forall p_k\in P^k(e),e\in \Gamma_B,\\
(q_B, p_k)_e&=-(\jump{\bm{u}_{B,h}\cdot\bm{n}},p_k)_e&&\quad \forall p_k\in P^k(e),e\in \mathcal{F}_{pr,B}\backslash\Gamma_B,\\
(q_B,p_k)_e&=(\bm{u}_{B,h}\cdot\bm{n}_D-\bm{u}_{D,h}\cdot\bm{n}_D,p_k)_e&&\quad \forall p_k\in P^k(e),e\in \mathcal{F}_{h,\Gamma},\\
(q_B,p_{k-1})_\tau&=(\nabla\cdot\bm{u}_{B,h},p_{k-1})_\tau&&\quad \forall p_{k-1}\in P^{k-1}(\tau),\tau\in \mathcal{T}_{h,B}.
\end{alignat*}
Then we can infer from \eqref{eq:conservation} that
\begin{align*}
&\sum_{e\in\mathcal{F}_{h,\Gamma}}\|\bm{u}_{B,h}\cdot\bm{n}_D-\bm{u}_{D,h}\cdot\bm{n}_D\|_{0,e}^2+\sum_{e\in \mathcal{F}_{pr,B}^0}\|\jump{\bm{u}_{B,h}\cdot\bm{n}}\|_{0,e}^2+\sum_{\tau\in \mathcal{T}_{h,B}}\|\nabla\cdot\bm{u}_{B,h}\|_{0,\tau}^2\\
&+\sum_{e\in \Gamma_B}\|\bm{u}_{B,h}\cdot\bm{n}_B-\Pi_{h,\Gamma_B}g_1\|_{0,e}^2=0,
\end{align*}
which yields
\begin{alignat*}{2}
\bm{u}_{B,h}\cdot\bm{n}_D&=\bm{u}_{D,h}\cdot\bm{n}_D &&\quad\mbox{on}\;\Gamma,\\
\jump{\bm{u}_{B,h}\cdot\bm{n}}_e&=0&&\quad \forall e\in \mathcal{F}_{pr,B}\backslash\Gamma_B,\\
\nabla\cdot\bm{u}_{B,h}\mid_\tau&=0&&\quad\forall \tau\in \mathcal{T}_{h,B},\\
\bm{u}_{B,h}\cdot\bm{n}_B\mid_e&=\Pi_{h,\Gamma_B}g_1\mid_e&&\quad\forall e\in \Gamma_B.
\end{alignat*}

Hence, $\bm{u}_{B,h}$ is divergence free in $\Omega_B$, the interface condition \eqref{eq:interface1} is satisfied exactly and $\bm{u}_{B,h}\cdot\bm{n}=\Pi_{h,\Gamma_B}g_1$ on $\Gamma_B$.
Finally, taking $q_D=\nabla\cdot\bm{u}_{D,h}- f$ and $\bm{v}_B=\bm{0}$ in \eqref{eq:discreteB2} implies that $\nabla \cdot \bm{u}_{D,h}= f$. Thus, \eqref{eq:massconservation} holds.

\end{proof}

\begin{remark}
Thanks to the special choice of the finite element spaces, we are able to achieve a $H(\tdiv;\Omega)$-conforming velocity over the whole domain. This choice also benefits the treatment of interface conditions. Indeed, the interface conditions can be imposed exactly without resorting to additional variables. Importantly, the proposed scheme satisfies the mass conservation exactly. These desirable merits make our scheme a good candidate for the simulation of the coupled flow and transport. For the sake of simplicity, we adopt the interface conditions \eqref{eq:interface1}-\eqref{eq:interface2}, and our scheme can also be extended to solve the coupling of Brinkman-Darcy flow with Beavers-Joseph-Saffman interface conditions.

\end{remark}

Since $\bm{u}_h$ is the $L^2$-orthogonal projection of $\bm{u}$ on the boundary, we have from the approximation properties of $\Pi_{h,\Gamma_i}, i=B,D$ that
\begin{align}
\|(\bm{u}-\bm{u}_h)\cdot\bm{n}\|_{0,\partial \Omega}\leq Ch^{k+1}\|\bm{u}\|_{k+1,\partial \Omega}.\label{eq:interpGin}
\end{align}

\begin{theorem}(unique solvability).
There exists a unique solution to \eqref{eq:discreteB1}-\eqref{eq:discreteB3}.

\end{theorem}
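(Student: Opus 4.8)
The plan is to exploit the fact that \eqref{eq:discreteB1}-\eqref{eq:discreteB3} is a square linear system: since the number of trial and test degrees of freedom coincide (the constraint on $\Gamma_D$ merely shifts $\bm{u}_{D,h}$ by a fixed lift of $\Pi_{h,\Gamma_D}g_2$), existence is equivalent to uniqueness, so it suffices to show that the difference of two solutions—which solves the homogeneous problem with $\bm{f}_B=\bm{0}$, $\bm{f}_D=\bm{0}$, $f=0$, $g_1=0$, $g_2=0$ and $\bm{u}_{D,h}\in H_{h,0}^D$—vanishes. For this I would test the homogeneous equations with the solution itself, namely \eqref{eq:discreteB1} with $(\bm{G}_B,\bm{v}_D)=(\bm{L}_h,\bm{u}_{D,h})$, \eqref{eq:discreteB2} with $(\bm{v}_B,q_D)=(\bm{u}_{B,h},p_{D,h})$, and \eqref{eq:discreteB3} with $q_B=p_{B,h}$, and then add the three identities. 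The coupling terms cancel pairwise: $B_h(\bm{L}_h,\bm{u}_{B,h})$ against $-B_h^*(\bm{u}_{B,h},\bm{L}_h)$ by the adjoint property \eqref{eq:adjoint1}, $b_h^*(p_{B,h},\bm{u}_{B,h})$ against $-b_h(\bm{u}_{B,h},p_{B,h})$ by \eqref{eq:adjoint2}, and the two copies of $I_h(p_{B,h},\bm{u}_{D,h})$ and of $A_h(\bm{u}_{D,h},p_{D,h})$ directly. What survives is the coercive identity
\begin{align*}
(\epsilon^{-1}\bm{L}_h,\bm{L}_h)_{\Omega_B}+(\alpha\bm{u}_{B,h},\bm{u}_{B,h})_{\Omega_B}+(K_D^{-1}\bm{u}_{D,h},\bm{u}_{D,h})_{\Omega_D}=0,
\end{align*}
and since $\epsilon^{-1}>0$, $\alpha>0$ and $K_D$ is symmetric positive definite, each term is nonnegative, forcing $\bm{L}_h=\bm{0}$, $\bm{u}_{B,h}=\bm{0}$ and $\bm{u}_{D,h}=\bm{0}$.

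It then remains to recover the two pressures. Taking $q_D=0$ in \eqref{eq:discreteB2} and inserting $\bm{L}_h=\bm{u}_{B,h}=\bm{0}$ yields $b_h^*(p_{B,h},\bm{v}_B)=0$ for all $\bm{v}_B\in H_h^B$; by \eqref{eq:adjoint2} this is $b_h(\bm{v}_B,p_{B,h})=0$ for all $\bm{v}_B$, so the inf-sup condition \eqref{eq:inf-sup} gives $\|p_{B,h}\|_h=0$, and since $\|\cdot\|_h$ is a norm on $Q_h^0$ and $p_{B,h}\in Q_h^0$ we conclude $p_{B,h}=0$. For $p_{D,h}$, taking $\bm{G}_B=\bm{0}$ in \eqref{eq:discreteB1} and inserting $\bm{u}_{D,h}=\bm{0}$ and $p_{B,h}=0$ leaves $A_h(\bm{v}_D,p_{D,h})=(p_{D,h},\nabla\cdot\bm{v}_D)_{\Omega_D}=0$ for all $\bm{v}_D\in H_{h,0}^D$. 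Because $H_{h,0}^D$ constrains the normal trace only on $\Gamma_D$, the divergence maps $H_{h,0}^D$ onto all of $Q_h^D$ (the classical inf-sup stability of the $\text{BDM}_k$–$P_{k-1}$ pair, with the mean value of $\nabla\cdot\bm{v}_D$ controlled by the free flux on $\Gamma$); choosing $\bm{v}_D$ with $\nabla\cdot\bm{v}_D=p_{D,h}$ gives $\|p_{D,h}\|_{0,\Omega_D}^2=0$, hence $p_{D,h}=0$. Thus the homogeneous problem has only the trivial solution, which proves unique solvability.

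The cancellation of the coupling terms is routine once the adjoint relations \eqref{eq:adjoint1}-\eqref{eq:adjoint2} and the antisymmetric placement of $I_h$ and $A_h$ across the three equations are noted. The step deserving genuine care is the recovery of the pressures: one must confirm that the divergence of $H_{h,0}^D$, which carries a Dirichlet constraint only on $\Gamma_D$ and a free normal trace on $\Gamma$, is surjective onto the full space $Q_h^D$, so that no spurious compatibility condition on the mean of $p_{D,h}$ intervenes, and one must combine \eqref{eq:inf-sup} with the fact that $\|\cdot\|_h$ is a genuine norm on the zero-mean space $Q_h^0$ to pin down $p_{B,h}$. A secondary subtlety is the strict positivity of $\alpha$ used in the coercivity identity; were $\alpha$ allowed to degenerate (the Stokes limit), $\bm{u}_{B,h}=\bm{0}$ would instead have to be extracted from $B_h^*(\bm{u}_{B,h},\bm{G}_B)=0$ for all $\bm{G}_B$ together with control of the $\|\cdot\|_Z$-norm.
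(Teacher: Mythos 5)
Your proof is correct and follows essentially the same route as the paper's: reduce existence to uniqueness via the square-system argument, test the homogeneous system with the solution itself so the coupling terms cancel by the adjoint properties \eqref{eq:adjoint1}--\eqref{eq:adjoint2}, conclude $\bm{L}_h=\bm{u}_{B,h}=\bm{u}_{D,h}=\bm{0}$ from the coercive identity, and recover $p_{B,h}$ via \eqref{eq:inf-sup} and $p_{D,h}$ via the BDM inf-sup condition. If anything, your handling of $p_{D,h}$ is slightly more careful than the paper's (which writes the supremum over $\bm{v}_D\in H_h^D$ although the admissible test space is $H_{h,0}^D$), since you explicitly verify that the divergence maps $H_{h,0}^D$ onto all of $Q_h^D$ thanks to the free normal trace on $\Gamma$.
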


\begin{proof}

As \eqref{eq:discreteB1}-\eqref{eq:discreteB3} is a square linear system, uniqueness implies existence. Thus, it suffices to show the uniqueness. To this end, we set $\bm{f}_D=\bm{f}_B=\bm{0}$ and $f=g_1=0$.
Then taking $\bm{G}_B=\bm{L}_h$, $\bm{v}_B=\bm{u}_{B,h}$, $q_B=p_{B,h}$, $\bm{v}_D=\bm{u}_{D,h}$ and $q_D=p_{D,h}$ in \eqref{eq:discreteB1}-\eqref{eq:discreteB3} and summing up the resulting equations, we can obtain
\begin{align*}
\|\epsilon^{-\frac{1}{2}}\bm{L}_h\|_{0,\Omega_B}^2+\|K_D^{-\frac{1}{2}}\bm{u}_{D,h}\|_{0,\Omega_D}^2+ \|\alpha^{\frac{1}{2}}\bm{u}_{B,h}\|_{0,\Omega_B}^2=0.
\end{align*}
Thus, we can infer that $\bm{L}_h=\bm{0}$ and $\bm{u}_{D,h}=\bm{u}_{B,h}=\bm{0}$.

On the other hand, we have from \eqref{eq:discreteB2}, the inf-sup condition \eqref{eq:inf-sup} and the adjoint property \eqref{eq:adjoint2} that
\begin{align*}
\|p_{B,h}\|_h\leq C \sup_{\bm{v}_B\in H_h^B}\frac{b_h(\bm{v}_B,p_{B,h})}{\|\bm{v}_B\|_{0,\Omega_B}}=C\sup_{\bm{v}_B\in H_h^B}\frac{b_h^*(p_{B,h},\bm{v}_B)}{\|\bm{v}_B\|_{0,\Omega_B}}=0.
\end{align*}
Since $\|p_{B,h}\|_h$ defines a norm on $Q_h^0$, it follows that $p_{B,h}=0$.

Finally, we have from the inf-sup condition (cf. \cite{Raviart77}) and \eqref{eq:discreteB1} that
\begin{align*}
\|p_{D,h}\|_{0,\Omega_D}\leq C \sup_{\bm{v}_D\in H_h^D}\frac{(p_{D,h},\nabla\cdot \bm{v}_D)_{\Omega_D}}{\|\bm{v}_D\|_{\text{div},\Omega_D}}=0.
\end{align*}
Hence $p_{D,h}=0$. Therefore, the proof is completed.
\end{proof}

\section{A priori error estimate}\label{sec:error}

In this section, we will prove the convergence error estimates for all the variables measured in proper norms. In particular, the velocity error is shown to be independent of the pressure variable. To this end, we first prove the following inf-sup condition, which will play an important role for later analysis.

\begin{lemma}\label{lemma:inf-supD}
There exists a positive constant $C$ independent of the meshsize such that
\begin{align*}
\|q\|_{0,\Omega_D}\leq C \sup_{\bm{v}\in H_{0,\Gamma}(\textnormal{div};\Omega_D)}\frac{(\nabla \cdot \bm{v},q)_{\Omega_D}}{\|\bm{v}\|_{\textnormal{div},\Omega_D}}
\end{align*}
for any $q\in L^2(\Omega_D)$.

\end{lemma}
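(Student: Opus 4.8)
The plan is to establish this inf-sup condition by the standard Fortin-type surjectivity argument: for a given $q\in L^2(\Omega_D)$ I will explicitly construct a field $\bm{v}\in H_{0,\Gamma}(\textnormal{div};\Omega_D)$ whose divergence equals $q$ and whose $H(\textnormal{div})$-norm is controlled by $\|q\|_{0,\Omega_D}$, and then insert this particular $\bm{v}$ into the supremum. The key structural feature to exploit is that only the normal component on $\Gamma$ is constrained, while $\Gamma_D$ is left free; this asymmetry is what makes the construction possible without any compatibility (zero-mean) condition on $q$.

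To construct $\bm{v}$, I would solve an auxiliary mixed boundary value problem for the Laplacian on $\Omega_D$, imposing a Dirichlet condition on $\Gamma_D$ and a homogeneous Neumann condition on $\Gamma$. Concretely, let $\psi\in H^1_{0,\Gamma_D}(\Omega_D)$ be the unique solution of $(\nabla \psi,\nabla \phi)_{\Omega_D}=-(q,\phi)_{\Omega_D}$ for all $\phi\in H^1_{0,\Gamma_D}(\Omega_D)$. Since $\Gamma_D$ has positive measure, the Poincaré--Friedrichs inequality guarantees coercivity of the bilinear form on $H^1_{0,\Gamma_D}(\Omega_D)$, so Lax--Milgram yields a unique $\psi$ with $\|\psi\|_{1,\Omega_D}\leq C\|q\|_{0,\Omega_D}$. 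I then set $\bm{v}:=\nabla \psi$.

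It remains to verify the three required properties of $\bm{v}$. Testing the variational equation against $\phi\in C_0^\infty(\Omega_D)$ shows $\nabla\cdot\bm{v}=q$ in the distributional sense, hence $\bm{v}\in H(\textnormal{div};\Omega_D)$ with $\|\nabla\cdot\bm{v}\|_{0,\Omega_D}=\|q\|_{0,\Omega_D}$, and combined with the a priori bound on $\psi$ this gives $\|\bm{v}\|_{\textnormal{div},\Omega_D}\leq C\|q\|_{0,\Omega_D}$. For the boundary condition I would combine Green's formula for $H(\textnormal{div})$ fields, namely $(\bm{v},\nabla\phi)_{\Omega_D}+(\nabla\cdot\bm{v},\phi)_{\Omega_D}=\langle \bm{v}\cdot\bm{n},\phi\rangle_{\partial\Omega_D}$, with the variational identity $(\bm{v},\nabla\phi)_{\Omega_D}=-(q,\phi)_{\Omega_D}$ to obtain $\langle \bm{v}\cdot\bm{n},\phi\rangle_{\partial\Omega_D}=0$ for every $\phi\in H^1_{0,\Gamma_D}(\Omega_D)$; since the traces of such $\phi$ are arbitrary on $\Gamma$ and vanish on $\Gamma_D$, this forces $\bm{v}\cdot\bm{n}=0$ on $\Gamma$, i.e. $\bm{v}\in H_{0,\Gamma}(\textnormal{div};\Omega_D)$. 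Finally, substituting this $\bm{v}$ into the right-hand side of the claimed estimate gives a lower bound for the supremum of $(\nabla\cdot\bm{v},q)_{\Omega_D}/\|\bm{v}\|_{\textnormal{div},\Omega_D}=\|q\|_{0,\Omega_D}^2/\|\bm{v}\|_{\textnormal{div},\Omega_D}\geq C^{-1}\|q\|_{0,\Omega_D}$, which is exactly the asserted inequality.

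The only delicate point is the rigorous interpretation of the normal-trace condition on the \emph{partial} boundary $\Gamma$: the trace $\bm{v}\cdot\bm{n}$ of an $H(\textnormal{div})$ field only lives in $H^{-1/2}(\partial\Omega_D)$, so identifying it as the natural (Neumann) boundary datum on $\Gamma$ must be carried out in the duality sense exhibited above rather than pointwise. Everything else is routine once $\Gamma_D$ is known to have positive measure, which is guaranteed by the standing assumption on the splitting $\partial\Omega=\Gamma_B\cup\Gamma_D$ in the model.
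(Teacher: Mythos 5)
Your proof is correct and takes essentially the same route as the paper: the paper likewise solves the auxiliary mixed problem $\Delta z=q$ in $\Omega_D$ with $z=0$ on $\Gamma_D$ and $\nabla z\cdot\bm{n}=0$ on $\Gamma$, sets $\bm{v}=\nabla z$, and bounds the supremum from below by $\|q\|_{0,\Omega_D}^2/\|\bm{v}\|_{\textnormal{div},\Omega_D}$. If anything, your write-up is slightly tidier: you use only the Lax--Milgram $H^1$ a priori bound (the paper invokes an $H^2$ regularity estimate that is not actually needed, since $\|\nabla\cdot\bm{v}\|_{0,\Omega_D}=\|q\|_{0,\Omega_D}$ exactly), and you justify $\bm{v}\cdot\bm{n}=0$ on $\Gamma$ rigorously in the $H^{-1/2}$ duality sense rather than pointwise.
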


\begin{proof}

Consider the boundary value problem
\begin{align*}
\Delta z&=q\quad\mbox{in}\;\Omega_D,\\
z&=0\quad\mbox{on}\;\Gamma_D,\\
\nabla z\cdot\bm{n}&=0\quad\mbox{on}\;\Gamma.
\end{align*}
The weak formulation reads: Find $z\in H^1_{0,\Gamma_D}(\Omega_D)$ such that
\begin{align}
(\nabla z, \nabla w)_{\Omega_D}=-(q,w)_{\Omega_D}.\label{eq:weak}
\end{align}

The Lax-Milgram lemma implies that \eqref{eq:weak} has a unique solution $z\in H^1_{0,\Gamma_D}(\Omega_D)$. In addition, we have $\|z\|_{2,\Omega_D}\leq C\|q\|_{0,\Omega_D}$.

Let $\hat{\bm{\sigma}}:=\nabla z$, we have $\nabla \cdot \hat{\bm{\sigma}} =q$ in $\Omega_D$, in addition $\hat{\bm{\sigma}}\cdot \bm{n}=0$ on $\Gamma$, which yields $\hat{\bm{\sigma}}\in H_{0,\Gamma}(\text{div};\Omega_D)$. Then we have
\begin{align*}
\sup_{\hat{\bm{\sigma}}\in H_{0,\Gamma}(\text{div};\Omega_D)}\frac{(\nabla \cdot \hat{\bm{\sigma}},q)_{\Omega_D}}{\|\hat{\bm{\sigma}}\|_{\text{div},\Omega_D}} \geq \frac{\|q\|_{0,\Omega_D}^2}{\|\hat{\bm{\sigma}}\|_{\text{div},\Omega_D}}\geq C \|q\|_{0,\Omega_D}.
\end{align*}

\end{proof}


The following error equations can be easily obtained by performing integration by parts on the discrete formulation \eqref{eq:discreteB1}-\eqref{eq:discreteB3}
\begin{align}
&(\epsilon^{-1}(\bm{L}-\bm{L}_{h}),\bm{G}_B)_{\Omega_B}-B_h^*(\bm{u}_B-\bm{u}_{B,h},\bm{G}_B)
+(K_D^{-1}(\bm{u}_D-\bm{u}_{D,h}),\bm{v}_D)_{\Omega_D}\nonumber\\
&+I_h(p_B-p_{B,h},\bm{v}_D)-A_h(\bm{v}_D,p_D-p_{D,h})=0,\label{eq:errorf1}\\
&B_h(\bm{L}-\bm{L}_h,\bm{v}_B)+(\alpha(\bm{u}_B-\bm{u}_{B,h}),\bm{v}_B)_{\Omega_B}+
b_h^*(p_B-p_{B,h},\bm{v}_B)+A_h(\bm{u}_D-\bm{u}_{D,h},q_D)=0,\label{eq:errorf2}\\
&-I_h(q_{B},\bm{u}_D-\bm{u}_{D,h})-b_h(\bm{u}_B-\bm{u}_{B,h},q_B)=0\label{eq:errorf3}
\end{align}
for all $(\bm{G}_B,\bm{v}_B, q_{B})\in W_h^B\times H_h^B\times Q_h^0$ and $(\bm{v}_D, q_D)\in H_{h,0}^D\times Q_h^D$.

\begin{lemma}\label{lemma:L2f}

Let $ (\bm{L}_h,\bm{u}_{B,h},p_{B,h})\in W_h^B\times H_h^B\times Q_h^0$ and $(\bm{u}_{D,h},p_{D,h})\in H_{h,g_2}^D\times Q_h^D$ be the discrete solution of \eqref{eq:discreteB1}-\eqref{eq:discreteB3}. Then, there exists a positive constant $C$ independent of the mesh size such that
\begin{align*}
&\|\epsilon^{-\frac{1}{2}}(\Pi_h \bm{L}-\bm{L}_h)\|_{0,\Omega_B}+\|K_D^{-\frac{1}{2}}(\Pi^{\textnormal{BDM}}\bm{u}_D-\bm{u}_{D,h})\|_{0,\Omega_D}+
\|\alpha^{\frac{1}{2}}(J_h\bm{u}_B-\bm{u}_{B,h})\|_{0,\Omega_B}\\
&\leq C \Big(\|\epsilon^{-\frac{1}{2}}(\Pi_h \bm{L}-\bm{L})\|_{0,\Omega_B}+\|K_D^{-\frac{1}{2}}(\Pi^{\textnormal{BDM}}\bm{u}_D-\bm{u}_{D})\|_{0,\Omega_D}+ \|\alpha^{\frac{1}{2}} (J_h\bm{u}_B-\bm{u}_{B})\|_{0,\Omega_B}\Big).
\end{align*}

\end{lemma}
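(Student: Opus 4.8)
The plan is to run the classical energy argument for mixed methods: split each error into an interpolation part and a discrete part, then test the error equations \eqref{eq:errorf1}--\eqref{eq:errorf3} against the discrete parts. Writing $\bm{\eta}_L:=\Pi_h\bm{L}-\bm{L}_h$, $\bm{\theta}_L:=\Pi_h\bm{L}-\bm{L}$ gives $\bm{L}-\bm{L}_h=\bm{\eta}_L-\bm{\theta}_L$, and analogously I would set $\bm{u}_D-\bm{u}_{D,h}=\bm{\eta}_{u_D}-\bm{\theta}_{u_D}$ (with $\Pi^{\text{BDM}}$), $\bm{u}_B-\bm{u}_{B,h}=\bm{\eta}_{u_B}-\bm{\theta}_{u_B}$ (with $J_h$), $p_B-p_{B,h}=\eta_{p_B}-\theta_{p_B}$ (with $I_h$) and $p_D-p_{D,h}=\eta_{p_D}-\theta_{p_D}$ (with $\mathbb{P}_h$). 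First I would substitute these decompositions and choose $\bm{G}_B=\bm{\eta}_L$, $\bm{v}_B=\bm{\eta}_{u_B}$, $q_B=\eta_{p_B}$, $\bm{v}_D=\bm{\eta}_{u_D}$, $q_D=\eta_{p_D}$, then add the three identities. Note that $\bm{\eta}_{u_D}$ is an admissible test function in $H_{h,0}^D$: since $\bm{u}_{D,h}\cdot\bm{n}_D=\Pi_{h,\Gamma_D}g_2$ on $\Gamma_D$ and the edge orthogonality defining $\Pi^{\text{BDM}}$ makes $\Pi^{\text{BDM}}\bm{u}_D\cdot\bm{n}_D=\Pi_{h,\Gamma_D}g_2$ there, the normal traces match and $\bm{\eta}_{u_D}\cdot\bm{n}_D=0$ on $\Gamma_D$.

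The diagonal contributions produce exactly the squared energy terms $\|\epsilon^{-1/2}\bm{\eta}_L\|_{0,\Omega_B}^2+\|K_D^{-1/2}\bm{\eta}_{u_D}\|_{0,\Omega_D}^2+\|\alpha^{1/2}\bm{\eta}_{u_B}\|_{0,\Omega_B}^2$. The next step is to show that every remaining off-diagonal term either cancels or vanishes. The cross terms involving only the discrete errors cancel pairwise: the two $B_h$/$B_h^*$ contributions cancel by the adjoint property \eqref{eq:adjoint1}, the two $b_h$/$b_h^*$ contributions by \eqref{eq:adjoint2}, while the coupled $I_h(\eta_{p_B},\bm{\eta}_{u_D})$ terms from \eqref{eq:errorf1} and \eqref{eq:errorf3}, and the $A_h(\bm{\eta}_{u_D},\eta_{p_D})$ terms from \eqref{eq:errorf1} and \eqref{eq:errorf2}, cancel directly by inspection of signs.

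The crux is that every cross term carrying an \emph{interpolation} error also vanishes through the defining orthogonalities of the projections: $B_h(\bm{\theta}_L,\bm{\eta}_{u_B})=0$ by \eqref{eq:Bh1}; $B_h^*(\bm{\theta}_{u_B},\bm{\eta}_L)=0$ by \eqref{eq:Bh2}; $b_h^*(\theta_{p_B},\bm{\eta}_{u_B})=0$ by \eqref{eq:bh1}; $b_h(\bm{\theta}_{u_B},\eta_{p_B})=0$ by \eqref{eq:bh2}. The two divergence terms $A_h(\bm{\eta}_{u_D},\theta_{p_D})$ and $A_h(\bm{\theta}_{u_D},\eta_{p_D})$ vanish because $\tdiv$ maps the BDM space into $Q_h^D$: the $L^2$-orthogonality of $\mathbb{P}_h$ disposes of the first, and the commutation $\tdiv\,\Pi^{\text{BDM}}=\mathbb{P}_h\tdiv$ of the second. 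I expect the interface terms $I_h(\theta_{p_B},\bm{\eta}_{u_D})$ and $I_h(\eta_{p_B},\bm{\theta}_{u_D})$ to be the main obstacle, since they are the only place the cross-subdomain coupling is genuinely used; both vanish by a degree-matching argument, namely that on each $e\in\mathcal{F}_{h,\Gamma}$ the normal trace of a BDM field and the trace of a $Q_h^B$ scalar both lie in $P^k(e)$, so the first is annihilated by the interface orthogonality of $I_h$ in \eqref{eq:Ih} and the second by the edge orthogonality in the definition of $\Pi^{\text{BDM}}$.

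After these cancellations the summed identity collapses to
\[
\|\epsilon^{-\frac12}\bm{\eta}_L\|_{0,\Omega_B}^2+\|K_D^{-\frac12}\bm{\eta}_{u_D}\|_{0,\Omega_D}^2+\|\alpha^{\frac12}\bm{\eta}_{u_B}\|_{0,\Omega_B}^2=(\epsilon^{-1}\bm{\theta}_L,\bm{\eta}_L)_{\Omega_B}+(K_D^{-1}\bm{\theta}_{u_D},\bm{\eta}_{u_D})_{\Omega_D}+(\alpha\bm{\theta}_{u_B},\bm{\eta}_{u_B})_{\Omega_B}.
\]
Applying the weighted Cauchy--Schwarz inequality to each term on the right and absorbing the energy norms of the $\bm{\eta}$'s into the left-hand side then yields the asserted bound, with $\bm{\theta}_L,\bm{\theta}_{u_D},\bm{\theta}_{u_B}$ measured in the same weighted $L^2$-norms. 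The whole argument is purely algebraic and pressure-free, which is precisely why the resulting velocity (and $\bm{L}$) estimate does not see the pressure errors.
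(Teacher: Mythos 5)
Your proposal is correct and follows essentially the same route as the paper's own proof: identical test functions $(\Pi_h\bm{L}-\bm{L}_h,\, J_h\bm{u}_B-\bm{u}_{B,h},\, I_hp_B-p_{B,h},\, \Pi^{\text{BDM}}\bm{u}_D-\bm{u}_{D,h},\, \mathbb{P}_hp_D-p_{D,h})$ in the summed error equations \eqref{eq:errorf1}--\eqref{eq:errorf3}, the same cancellations via the adjoint properties \eqref{eq:adjoint1}--\eqref{eq:adjoint2} and the orthogonalities \eqref{eq:Bh1} and \eqref{eq:bh1}--\eqref{eq:Bh2}, the same degree-matching cancellation of the interface terms using the edge moments of $I_h$ in \eqref{eq:Ih} and of $\Pi^{\text{BDM}}$, and the same concluding weighted Cauchy--Schwarz/Young absorption. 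If anything, your write-up is slightly more explicit than the paper's (e.g., verifying that $\Pi^{\text{BDM}}\bm{u}_D-\bm{u}_{D,h}$ is an admissible test function in $H_{h,0}^D$, and spelling out why the two $A_h$ cross terms vanish via $\nabla\cdot\Pi^{\text{BDM}}=\mathbb{P}_h\nabla\cdot$), but the argument is the same.
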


\begin{proof}

Taking $\bm{G}_B=\Pi_h \bm{G}-\bm{G}_h$, $\bm{v}_B=J_h\bm{u}_B-\bm{u}_{B,h}$, $q_B = I_hp_B-p_{B,h}$, $\bm{v}_D=\Pi^{\text{BDM}}\bm{u}_D-\bm{u}_{D,h}$, $q_D=\mathbb{P}_h p_D-p_{D,h}$ in \eqref{eq:errorf1}-\eqref{eq:errorf3} and summing up the resulting equations yield
\begin{equation}
\begin{split}
&(\epsilon^{-1}(\bm{L}-\bm{L}_h),\Pi_h \bm{L}-\bm{L}_h)_{\Omega_B}+(K_D^{-1}(\bm{u}_D-\bm{u}_{D,h}),\Pi^{\text{BDM}}\bm{u}_D-\bm{u}_{D,h})_{\Omega_D}\\
&\;+(\alpha(\bm{u}_B-\bm{u}_{B,h}),J_h\bm{u}_B-\bm{u}_{B,h})_{\Omega_B}+\sum_{e\in \mathcal{F}_{h,\Gamma}}((\Pi^{\text{BDM}}\bm{u}_D-\bm{u}_{D,h})\cdot \bm{n}_D,p_B-p_{B,h})_e\\
&\;-\sum_{e\in  \mathcal{F}_{h,\Gamma}}((\bm{u}_D-\bm{u}_{D,h})\cdot \bm{n}_D,I_hp_B-p_{B,h})_e=0,
\end{split}
\label{eq:errorn}
\end{equation}
where we employ \eqref{eq:adjoint1}, \eqref{eq:adjoint2}, \eqref{eq:Bh1} and \eqref{eq:bh1}-\eqref{eq:Bh2}.

%
%
%
%
%
We can infer from the definitions of $I_h$ and $\Pi^{\text{BDM}}$ that
\begin{align*}
\sum_{e\in \mathcal{F}_{h,\Gamma}}((\Pi^{\text{BDM}}\bm{u}_D-\bm{u}_{D,h})\cdot \bm{n}_D,p_B-p_{B,h})_e-\sum_{e\in \mathcal{F}_{h,\Gamma}}((\bm{u}_D-\bm{u}_{D,h})\cdot \bm{n}_D,I_hp_B-p_{B,h})_e=0.
\end{align*}
Therefore, \eqref{eq:errorn} can be rewritten as
\begin{align*}
&(\epsilon^{-1}(\bm{L}-\bm{L}_h),\Pi_h \bm{L}-\bm{L}_h)_{\Omega_B}+(K_D^{-1}(\bm{u}_D-\bm{u}_{D,h}),\Pi^{\text{BDM}}\bm{u}_D-\bm{u}_{D,h})_{\Omega_D}\\
&\;+(\alpha(\bm{u}_B-\bm{u}_{B,h}),J_h\bm{u}_B-\bm{u}_{B,h})_{\Omega_B}=0,
\end{align*}
which coupled with Young's inequality leads to
\begin{align*}
&\|\epsilon^{-\frac{1}{2}}(\Pi_h \bm{L}-\bm{L}_h)\|_{0,\Omega_B}^2+\|K_D^{-\frac{1}{2}}(\Pi^{\text{BDM}}\bm{u}_D-\bm{u}_{D,h})\|_{0,\Omega_D}^2+
\|\alpha^{\frac{1}{2}}(J_h\bm{u}_B-\bm{u}_{B,h})\|_{0,\Omega_B}^2\\
&\;\leq C \Big(\|\epsilon^{-\frac{1}{2}}(\Pi_h \bm{L}-\bm{L})\|_{0,\Omega_B}^2
+\|K_D^{-\frac{1}{2}}(\Pi^{\text{BDM}}\bm{u}_D-\bm{u}_{D})\|_{0,\Omega_D}^2+\|\alpha^{\frac{1}{2}}(J_h\bm{u}_B-\bm{u}_{B})\|_{0,\Omega_B}^2\Big).
\end{align*}
Therefore, the proof is completed.

\end{proof}

\begin{lemma}\label{lemma:p}
Let $ (\bm{L}_h,\bm{u}_{B,h},p_{B,h})\in W_h^B\times H_h^B\times Q_h^0$ and $(\bm{u}_{D,h},p_{D,h})\in H_{h,g_2}^D\times Q_h^D$ be the discrete solution of \eqref{eq:discreteB1}-\eqref{eq:discreteB3}.  Then, the following convergence estimates hold
\begin{align*}
\|\mathbb{P}_h p_D-p_{D,h}\|_{0,\Omega_D}&\leq C \|\bm{u}_D-\bm{u}_{D,h}\|_{0,\Omega_D},\\
\|I_hp_B-p_{B,h}\|_{0,\Omega_B}&\leq C  \Big( \|\Pi_h\bm{L}-\bm{L}_h\|_{0,\Omega_B}+\alpha_{\textnormal{max}}^{\frac{1}{2}}\|\alpha^{\frac{1}{2}}(\bm{u}_B-\bm{u}_{B,h})\|_{0,\Omega_B}\Big),\\
\|\nabla \cdot (\bm{u}_D-\bm{u}_{D,h})\|_{0,\Omega_D}&=\|\nabla \cdot (\Pi^{\textnormal{BDM}}\bm{u}_D-\bm{u}_{D})\|_{0,\Omega_D},
\end{align*}
where $\alpha_{\textnormal{max}}$ is the maximum eigenvalue of $\alpha$.

\end{lemma}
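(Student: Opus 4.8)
The plan is to prove the three estimates in Lemma~\ref{lemma:p} one at a time, each by exploiting an appropriate inf-sup condition together with the error equations \eqref{eq:errorf1}--\eqref{eq:errorf3}, the adjoint properties, and the projection identities already established.

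For the first estimate, I would start from the Darcy inf-sup condition (cf.~\cite{Raviart77}), namely
\begin{align*}
\|\mathbb{P}_h p_D-p_{D,h}\|_{0,\Omega_D}\leq C\sup_{\bm{v}_D\in H_h^D}\frac{(\mathbb{P}_h p_D-p_{D,h},\nabla\cdot\bm{v}_D)_{\Omega_D}}{\|\bm{v}_D\|_{\textnormal{div},\Omega_D}}.
\end{align*}
For the numerator, since I want to bound only by the velocity error, I would restrict to test functions $\bm{v}_D\in H_{h,0}^D$ (so that the interface term $I_h(p_B-p_{B,h},\bm{v}_D)$ vanishes) and use the error equation \eqref{eq:errorf1} to rewrite $A_h(\bm{v}_D,p_D-p_{D,h})=(\nabla\cdot\bm{v}_D,p_D-p_{D,h})_{\Omega_D}$ in terms of $(\epsilon^{-1}(\bm{L}-\bm{L}_h),\bm{G}_B)_{\Omega_B}-B_h^*(\cdots)+(K_D^{-1}(\bm{u}_D-\bm{u}_{D,h}),\bm{v}_D)_{\Omega_D}$ with $\bm{G}_B=\bm{0}$. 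The commutativity property $(\mathbb{P}_h p_D-p_D,\nabla\cdot\bm{v}_D)=0$ (since $\nabla\cdot\bm{v}_D\in Q_h^D$) lets me replace $\mathbb{P}_h p_D$ by $p_D$ in the numerator, and what survives is $(K_D^{-1}(\bm{u}_D-\bm{u}_{D,h}),\bm{v}_D)_{\Omega_D}$, which is controlled by $\|\bm{u}_D-\bm{u}_{D,h}\|_{0,\Omega_D}\|\bm{v}_D\|_{0,\Omega_D}$ and hence by $\|\bm{u}_D-\bm{u}_{D,h}\|_{0,\Omega_D}\|\bm{v}_D\|_{\textnormal{div},\Omega_D}$.

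For the second estimate, the plan is to invoke the Brinkman inf-sup condition \eqref{eq:inf-sup}, applied to $I_hp_B-p_{B,h}\in Q_h^0$, which bounds $\|I_hp_B-p_{B,h}\|_h$ by a supremum of $b_h(\bm{v}_B,I_hp_B-p_{B,h})/\|\bm{v}_B\|_{0,\Omega_B}$. Using the adjoint property \eqref{eq:adjoint2} to turn $b_h$ into $b_h^*$, and the identity \eqref{eq:bh1} to replace $I_hp_B$ by $p_B$ in the $b_h^*$ term, I would then use the error equation \eqref{eq:errorf2} with $q_D=0$ to express $b_h^*(p_B-p_{B,h},\bm{v}_B)=-B_h(\bm{L}-\bm{L}_h,\bm{v}_B)-(\alpha(\bm{u}_B-\bm{u}_{B,h}),\bm{v}_B)_{\Omega_B}$. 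The adjoint relation \eqref{eq:adjoint1} together with the projection property \eqref{eq:Bh1} converts $B_h(\bm{L}-\bm{L}_h,\bm{v}_B)$ into $B_h(\Pi_h\bm{L}-\bm{L}_h,\bm{v}_B)=B_h^*(\bm{v}_B,\Pi_h\bm{L}-\bm{L}_h)$, which I can bound by $\|\Pi_h\bm{L}-\bm{L}_h\|_{0,\Omega_B}\|\bm{v}_B\|_Z$; the zeroth-order term is bounded by a Cauchy--Schwarz estimate yielding the factor $\alpha_{\textnormal{max}}^{1/2}\|\alpha^{1/2}(\bm{u}_B-\bm{u}_{B,h})\|_{0,\Omega_B}$. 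The main obstacle here is the bookkeeping of which norm ($\|\cdot\|_Z$ versus $\|\cdot\|_{0,\Omega_B}$) appears on $\bm{v}_B$ after applying $B_h^*$; I would rely on a bound $\|\bm{v}_B\|_Z\leq C\|\bm{v}_B\|_{0,\Omega_B}$ valid on the finite-dimensional space $H_h^B$ via an inverse inequality, so that the final supremum is over $\|\bm{v}_B\|_{0,\Omega_B}$, and since $\|\cdot\|_h$ controls $\|\cdot\|_{0,\Omega_B}$ on $Q_h^0$, this closes the estimate.

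The third identity is the simplest: by the strong mass conservation lemma we have $\nabla\cdot\bm{u}_{D,h}=f$ and $\nabla\cdot\bm{u}_D=f$ in $\Omega_D$, so $\nabla\cdot(\bm{u}_D-\bm{u}_{D,h})=\nabla\cdot\bm{u}_D-f$; meanwhile the commutativity $\nabla\cdot\Pi^{\textnormal{BDM}}=\mathbb{P}_h\nabla\cdot$ gives $\nabla\cdot\Pi^{\textnormal{BDM}}\bm{u}_D=\mathbb{P}_h f=f$ (using that $f\in Q_h^D$ by the standing assumption that $f$ is a polynomial in $Q_h^D$), whence $\nabla\cdot(\Pi^{\textnormal{BDM}}\bm{u}_D-\bm{u}_D)=f-\nabla\cdot\bm{u}_D$. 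Comparing the two expressions shows they are negatives of each other, and since the $L^2$ norm is insensitive to sign, the two norms coincide. I expect the second estimate to be the genuinely delicate one, precisely because of the norm-switching on the Brinkman velocity-gradient term and the need to track that the inf-sup supremum is taken in the $L^2$ norm rather than the $Z$-norm.
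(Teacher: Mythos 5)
There are two genuine gaps, one in each of the first two estimates. For the Darcy pressure bound, your plan rests on a false premise: restricting test functions to $H_{h,0}^D$ does \emph{not} make the interface term vanish. By the paper's definition, $H_{h,0}^D$ constrains the normal trace on the outer boundary $\Gamma_D$, whereas $I_h(p_B-p_{B,h},\bm{v}_D)=\sum_{e\in\mathcal{F}_{h,\Gamma}}(\bm{v}_D\cdot\bm{n}_D,p_B-p_{B,h})_e$ lives on the interface $\Gamma$; for $\bm{v}_D\in H_{h,0}^D$ it is in general nonzero, so after invoking \eqref{eq:errorf1} you are left with an uncontrolled Brinkman pressure contribution, and the claimed bound by $\|\bm{u}_D-\bm{u}_{D,h}\|_{0,\Omega_D}$ alone does not follow. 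Eliminating this term requires test functions with vanishing normal trace on $\Gamma$, together with an inf-sup bound that survives this restriction --- restricting the supremum in a generic discrete inf-sup to a subspace is not automatic. This is exactly why the paper proves the bespoke inf-sup condition of Lemma~\ref{lemma:inf-supD} over $H_{0,\Gamma}(\textnormal{div};\Omega_D)$ (via an auxiliary mixed boundary value problem) and then transfers it to the discrete level through a Fortin operator $\Pi^F$ satisfying $A_h(\bm{v}-\Pi^F\bm{v},q)=0$ and $\|\Pi^F\bm{v}\|_{\textnormal{div},\Omega_D}\leq C\|\bm{v}\|_{\textnormal{div},\Omega_D}$.

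For the Brinkman pressure bound, the step you yourself flagged as delicate is where the argument fails: the inequality $\|\bm{v}_B\|_Z\leq C\|\bm{v}_B\|_{0,\Omega_B}$ on $H_h^B$ with mesh-independent $C$ is false. The $Z$-norm contains broken gradients and $h_e^{-1}$-weighted jumps, so inverse estimates only give $\|\bm{v}_B\|_Z\leq Ch^{-1}\|\bm{v}_B\|_{0,\Omega_B}$; your chain then produces $\|I_hp_B-p_{B,h}\|_h\leq C\bigl(h^{-1}\|\Pi_h\bm{L}-\bm{L}_h\|_{0,\Omega_B}+\alpha_{\textnormal{max}}^{1/2}\|\alpha^{1/2}(\bm{u}_B-\bm{u}_{B,h})\|_{0,\Omega_B}\bigr)$, losing a full power of $h$ on the gradient term and destroying the optimal $\mathcal{O}(h^{k+1})$ rate needed in Theorem~\ref{thm:L2}. (A smaller point: \eqref{eq:inf-sup} controls only the seminorm $\|\cdot\|_h$, so you would additionally need a broken Poincar\'e inequality on $Q_h^0$ --- true, but unstated in the paper.) The paper avoids the inverse inequality altogether: it starts from the \emph{continuous} inf-sup over $H_0^1(\Omega_B)^2$ with $\|\bm{v}\|_{1,\Omega_B}$ in the denominator, rewrites $(\nabla\cdot\bm{v},I_hp_B-p_{B,h})_{\Omega_B}=-b_h(\bm{v},I_hp_B-p_{B,h})=-b_h(J_h\bm{v},I_hp_B-p_{B,h})$ using \eqref{eq:bh2}, proceeds as you do via \eqref{eq:errorf2}, \eqref{eq:adjoint2}, \eqref{eq:bh1} and \eqref{eq:Bh1}, and then closes with the stability bounds $\|J_h\bm{v}\|_Z\leq C\|\bm{v}\|_{1,\Omega_B}$ and $\|J_h\bm{v}\|_{0,\Omega_B}\leq C\|\bm{v}\|_{1,\Omega_B}$ from \eqref{eq:Jhv}--\eqref{eq:Jhv2}. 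Your third identity is essentially correct: under the standing assumption $f\in Q_h^D$ both sides in fact vanish, so the identity is trivially true; the paper's route via \eqref{eq:errorf2}, which shows $\nabla\cdot(\Pi^{\textnormal{BDM}}\bm{u}_D-\bm{u}_{D,h})=0$ directly, is marginally more general since it never uses that assumption.
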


\begin{proof}
Note that $p_{B,h}\in L^2_0(\Omega_B)$ and $I_hp_B\in L^2_0(\Omega_B)$ (cf. \eqref{eq:Ih}), thereby $I_hp_B-p_{B,h}\in L^2_0(\Omega_B)$, then it is well known that the following inf-sup condition holds (cf. \cite{GiraultRaviart86})
\begin{align*}
\|I_hp_B-p_{B,h}\|_{0,\Omega_B}\leq C \sup_{\bm{v}\in H^1_{0}(\Omega_B)^2}\frac{(\nabla \cdot \bm{v},I_hp_B-p_{B,h})_{\Omega_B}}{\|\bm{v}\|_{1,\Omega_B}},
\end{align*}
where we can estimate the numerator of the right-hand side by the Cauchy-Schwarz inequality, \eqref{eq:Bh1}, \eqref{eq:bh2} and \eqref{eq:errorf2}
\begin{align*}
&(\nabla \cdot \bm{v},I_hp_B-p_{B,h})_{\Omega_B}=\sum_{e\in \mathcal{F}_{dl,B}}(\bm{v}\cdot \bm{n},\jump{I_hp_B-p_{B,h}})_e-\sum_{\tau\in \mathcal{T}_{h,B}}(\bm{v},\nabla (I_hp_B-p_{B,h}))_\tau\\
&=-b_h(\bm{v},I_hp_B-p_{B,h})=-b_h(J_h\bm{v},I_hp_B-p_{B,h})\\
&=B_h(\Pi_h\bm{L}-\bm{L}_h,J_h\bm{v})+ (\alpha(\bm{u}_B-\bm{u}_{B,h}),J_h\bm{v})_{\Omega_B}\\
&\leq C \Big(\|\Pi_h\bm{L}-\bm{L}_h\|_{0,\Omega_B}\|J_h\bm{v}\|_{Z}+\alpha_{\text{max}}^{\frac{1}{2}} \|\alpha^{\frac{1}{2}}(\bm{u}_B-\bm{u}_{B,h})\|_{0,\Omega_B}\|J_h\bm{v}\|_{0,\Omega_B}\Big).
\end{align*}
The above estimates coupled with \eqref{eq:Jhv} and \eqref{eq:Jhv2} lead to
\begin{align*}
\|I_hp_B-p_{B,h}\|_{0,\Omega_B}\leq C \Big(\|\Pi_h\bm{L}-\bm{L}_h\|_{0,\Omega_B}+
\alpha_{\text{max}}^{\frac{1}{2}}\|\alpha^{\frac{1}{2}}(\bm{u}_B-\bm{u}_{B,h})\|_{0,\Omega_B}\Big).
\end{align*}
Next, we prove the error estimate for $\|\mathbb{P}_hp_D-p_{D,h}\|_{0,\Omega_D}$.
We have from Lemma~\ref{lemma:inf-supD} that
\begin{equation}
\begin{split}
\|\mathbb{P}_hp_D-p_{D,h}\|_{0,\Omega_D}&\leq C \sup_{\bm{v}\in H_{0,\Gamma}(\text{div};\Omega_D)}\frac{(\mathbb{P}_hp_D-p_{D,h},\nabla\cdot \bm{v})_{\Omega_D}}{\|\bm{v}\|_{\text{div},\Omega_D}}\\
&= C \sup_{\bm{v}\in H_{0,\Gamma}(\text{div};\Omega_D)}\frac{A_h(\bm{v},\mathbb{P}_hp_D-p_{D,h})}{\|\bm{v}\|_{\text{div},\Omega_D}}.
\end{split}
\label{eq:pD2}
\end{equation}
Following \cite[Section~4.2]{Gatica14}, we can infer that there exists a Fortin interpolation operator $\Pi^F$ which satisfies
\begin{align*}
A_h(\bm{v}-\Pi^F \bm{v},q)=0\quad \forall q\in Q_h^D,\\
\|\Pi^F\bm{v}\|_{\text{div},\Omega_D}\leq C \|\bm{v}\|_{\text{div},\Omega_D}.
\end{align*}
Therefore, we have from \eqref{eq:errorf1} and \eqref{eq:pD2} that
\begin{align*}
\|\mathbb{P}_hp_D-p_{D,h}\|_{0,\Omega_D}& \leq C \sup_{\bm{v}\in H_{0,\Gamma}(\text{div};\Omega_D)}
\frac{A_h(\Pi^F\bm{v},\mathbb{P}_hp_D-p_{D,h})}{\|\bm{v}\|_{\text{div},\Omega_D}}\\
&=C \sup_{\bm{v}\in H_{0,\Gamma}(\text{div};\Omega_D)}\frac{(K_D^{-1}(\bm{u}_D-\bm{u}_{D,h}),\Pi^F\bm{v})_{\Omega_D}}{\|\bm{v}\|_{\text{div},\Omega_D}}\\
&\leq C \|K_D^{-1}(\bm{u}_D-\bm{u}_{D,h})\|_{0,\Omega_D}.
\end{align*}
On the other hand, we can deduce from \eqref{eq:errorf2} that
\begin{align*}
(\nabla \cdot (\Pi^{\text{BDM}}\bm{u}_D-\bm{u}_{D,h}),q_D)_{\Omega_D}=0\quad \forall q_D\in Q_h^D,
\end{align*}
which implies $\nabla \cdot (\Pi^{\text{BDM}}\bm{u}_D-\bm{u}_{D,h})\mid_{\Omega_D}=0$. Therefore, we have
\begin{align*}
\|\nabla \cdot (\bm{u}_D-\bm{u}_{D,h})\|_{0,\Omega_D}=\|\nabla \cdot (\Pi^{\text{BDM}}\bm{u}_D-\bm{u}_{D})\|_{0,\Omega_D}.
\end{align*}

\end{proof}

Combining Lemmas~\ref{lemma:L2f} and \ref{lemma:p}, and the interpolation error estimates \eqref{eq:BDM}-\eqref{eq:Ph}, \eqref{eq:Lerror} and \eqref{eq:Perror}-\eqref{eq:uerror} leads to the next theorem.

\begin{theorem}\label{thm:L2}
Assume that $(\bm{L},\bm{u})\in H^{k+1}(\Omega_B)^{2\times 2} \times (H^{k+1}(\Omega)^2\cap H^k(\textnormal{div};\Omega_D))$ and $(p_B,p_D)\in H^{k+1}(\Omega_B)\times H^{k+1}(\Omega_D)$. Let $ (\bm{L}_h,\bm{u}_{B,h},p_{B,h})\in W_h^B\times H_h^B\times Q_h^0$ and $(\bm{u}_{D,h},p_{D,h})\in H_{h,g_2}^D\times Q_h^D$ be the discrete solution of \eqref{eq:discreteB1}-\eqref{eq:discreteB3}, then the following estimates hold
\begin{align*}
&\|\epsilon^{-\frac{1}{2}}( \bm{L}-\bm{L}_h)\|_{0,\Omega_B}+\|K_D^{-\frac{1}{2}}(\bm{u}_D-\bm{u}_{D,h})\|_{0,\Omega_D}+
\|\alpha^{\frac{1}{2}}(\bm{u}_B-\bm{u}_{B,h})\|_{0,\Omega_B}\\
&\leq C h^{k+1}\Big(\epsilon^{-\frac{1}{2}}\|\bm{L}\|_{k+1,\Omega_B}+\|\bm{u}\|_{k+1}\Big),\\
&\|p_D-p_{D,h}\|_{0,\Omega_D}\leq C h^{k+1}\Big(\epsilon^{-\frac{1}{2}}\|\bm{L}\|_{k+1,\Omega_B}+\|\bm{u}\|_{k+1}
+\|p_D\|_{k+1,\Omega_D}\Big),\\
&\|p_B-p_{B,h}\|_{0,\Omega_B}\leq C h^{k+1}\Big((1+\epsilon^{-\frac{1}{2}})\|\bm{L}\|_{k+1,\Omega_B}+\|\bm{u}\|_{k+1}+
\|p_B\|_{k+1,\Omega_B}\Big),\\
&\|\nabla \cdot (\bm{u}_D-\bm{u}_{D,h})\|_{0,\Omega_D}\leq C h^{k}\|\nabla\cdot\bm{u}_D\|_{k,\Omega_D}.
\end{align*}

\end{theorem}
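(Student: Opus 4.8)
The plan is to obtain each of the four estimates in Theorem~\ref{thm:L2} by combining the two preceding lemmas with the stated interpolation error bounds, using the triangle inequality to pass from the projection-difference quantities controlled in Lemmas~\ref{lemma:L2f} and \ref{lemma:p} to the true errors. First I would establish the combined velocity/$\bm{L}$ estimate. By the triangle inequality, $\|\epsilon^{-1/2}(\bm{L}-\bm{L}_h)\|_{0,\Omega_B}\leq \|\epsilon^{-1/2}(\bm{L}-\Pi_h\bm{L})\|_{0,\Omega_B}+\|\epsilon^{-1/2}(\Pi_h\bm{L}-\bm{L}_h)\|_{0,\Omega_B}$, and similarly for the two velocity terms using $\Pi^{\textnormal{BDM}}\bm{u}_D$ and $J_h\bm{u}_B$ as intermediaries. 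Lemma~\ref{lemma:L2f} bounds the three projection-difference terms on the discrete side by the three interpolation-error terms on the other side, so the whole left-hand side is controlled by $\|\epsilon^{-1/2}(\bm{L}-\Pi_h\bm{L})\|_{0,\Omega_B}+\|K_D^{-1/2}(\bm{u}_D-\Pi^{\textnormal{BDM}}\bm{u}_D)\|_{0,\Omega_D}+\|\alpha^{1/2}(\bm{u}_B-J_h\bm{u}_B)\|_{0,\Omega_B}$. Inserting \eqref{eq:Lerror}, \eqref{eq:BDM} and \eqref{eq:uerror} then yields the advertised $\mathcal{O}(h^{k+1})$ bound, with the $\epsilon^{-1/2}$, $K_D^{-1/2}$, $\alpha^{1/2}$ weights folded into the constant in the natural way.

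Next I would treat the two pressure estimates. For $p_D$, write $\|p_D-p_{D,h}\|_{0,\Omega_D}\leq \|p_D-\mathbb{P}_hp_D\|_{0,\Omega_D}+\|\mathbb{P}_hp_D-p_{D,h}\|_{0,\Omega_D}$; the first term is $\mathcal{O}(h^{k+1})$ by \eqref{eq:Ph}, and the second is bounded by $C\|\bm{u}_D-\bm{u}_{D,h}\|_{0,\Omega_D}$ via the first estimate of Lemma~\ref{lemma:p}, which the just-proven velocity bound controls. The $\epsilon^{-1/2}\|\bm{L}\|_{k+1}$ and $\|\bm{u}\|_{k+1}$ contributions to the right-hand side enter precisely through that velocity error. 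For $p_B$, proceed analogously through $I_hp_B$: \eqref{eq:Perror} handles $\|p_B-I_hp_B\|_{0,\Omega_B}$, while the second estimate of Lemma~\ref{lemma:p} bounds $\|I_hp_B-p_{B,h}\|_{0,\Omega_B}$ by $C(\|\Pi_h\bm{L}-\bm{L}_h\|_{0,\Omega_B}+\alpha_{\textnormal{max}}^{1/2}\|\alpha^{1/2}(\bm{u}_B-\bm{u}_{B,h})\|_{0,\Omega_B})$. The first of these is again reduced through $\Pi_h\bm{L}$ to the $\bm{L}$-error, producing the $(1+\epsilon^{-1/2})\|\bm{L}\|_{k+1,\Omega_B}$ factor: the $\epsilon^{-1/2}$ comes from the velocity estimate while the extra $1$ comes from multiplying back through by $\epsilon^{1/2}$ to recover an unweighted $\|\Pi_h\bm{L}-\bm{L}_h\|_{0,\Omega_B}$.

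Finally, the divergence estimate is immediate: the third identity of Lemma~\ref{lemma:p} gives $\|\nabla\cdot(\bm{u}_D-\bm{u}_{D,h})\|_{0,\Omega_D}=\|\nabla\cdot(\Pi^{\textnormal{BDM}}\bm{u}_D-\bm{u}_D)\|_{0,\Omega_D}$, and applying \eqref{eq:divBDM} delivers the $Ch^k\|\nabla\cdot\bm{u}_D\|_{k,\Omega_D}$ bound directly.

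I do not expect a genuine obstacle here, since the substantive analytic work — the inf-sup conditions, the Fortin operator, the cancellation of interface terms, and the coercivity argument — has already been discharged in Lemmas~\ref{lemma:inf-supD}, \ref{lemma:L2f} and \ref{lemma:p}. The only point requiring mild care is bookkeeping of the viscosity-dependent weights: one must track how the $\epsilon^{-1/2}$, $K_D^{-1/2}$ and $\alpha^{1/2}$ factors propagate through the triangle inequalities so that the constant $C$ remains genuinely independent of these parameters, which is exactly what underpins the claimed pressure-robustness and the precise $(1+\epsilon^{-1/2})$ prefactor in the $p_B$ estimate.
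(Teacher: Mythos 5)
Your proposal is correct and follows exactly the paper's route: the paper proves Theorem~\ref{thm:L2} precisely by combining Lemmas~\ref{lemma:L2f} and \ref{lemma:p} with the interpolation estimates \eqref{eq:BDM}--\eqref{eq:Ph}, \eqref{eq:Lerror} and \eqref{eq:Perror}--\eqref{eq:uerror} via the triangle inequality, which is what you spell out in detail (including the correct accounting for the $(1+\epsilon^{-\frac{1}{2}})$ prefactor in the $p_B$ bound and the direct use of the divergence identity with \eqref{eq:divBDM}).
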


\begin{remark}
We can observe from Theorem~\ref{thm:L2} that the convergence error estimate for velocity $\bm{u}_B$ is independent of the
pressure variable $p_B$, which demonstrates the pressure-robustness of the proposed scheme. The study of the pressure-robust schemes for incompressible flow is important from a practical point of view, see, e.g, \cite{FuQiu18,Frerichs20,Mu20,Wang21}.

\end{remark}

\section{Upwinding staggered DG method for transport equation}\label{sec:transport}
In this section, we devise a new staggered DG scheme for the transport equation, where the upwinding fluxes as well as the boundary correction terms are exploited to improve the performance of the scheme. For this purpose, we first define the spaces that will be used for the approximation of the transport equation:
\begin{align*}
& U_h := \{ \phi_h\vert_\tau \in P^k(\tau), \forall\tau \in \mathcal{T}_{h}; \jump{\phi_h}_e = 0, \forall e \in \mathcal{F}_{pr}^0\}, \\
& W_h:= \{ \bm{q}_h\vert_{\tau} \in P^k(\tau)^2,  \forall\tau \in \mathcal{T}_{h};  \jump{\bm{q}_h  \cdot \bm{n}}_e = 0, \forall e \in \mathcal{F}_{dl} \}.
\end{align*}
For any $\phi_h\in U_h$, we define
\begin{align*}
\|\phi_h\|_{1,h,*}^2:=\sum_{\tau\in \mathcal{T}_h}\|\nabla \phi_h\|_{0,\tau}^2+\sum_{e\in \mathcal{F}_{dl}}h_e^{-1}\|\jump{\phi_h}\|_{0,e}^2.
\end{align*}
Following \cite[Lemma~3.1]{Feng02}, we have the following trace inequality
\begin{align}
\|\phi_h\|_{0,\partial \Omega}\leq C \|\phi_h\|_{1,h}\quad \forall \phi_h\in U_h.\label{eq:Gammaz}
\end{align}
We rewrite the transport equation by introducing the diffusive flux
\begin{align}
\bm{z}=-K\nabla c.\label{eq:transport1}
\end{align}
Then we can recast the transport equation \eqref{eq:modeltransport} into the following first-order system
\begin{align}
\phi c_t+\nabla \cdot (c\bm{u}+\bm{z})&=\phi s+\hat{c}f^+ - cf^-\quad \mbox{in}\;\Omega,\label{eq:transport2}\\
(c\bm{u}+\bm{z})\cdot\bm{n}&=c_{\text{in}} \bm{u}\cdot\bm{n}\quad \mbox{on}\;\Gamma_{\text{in}},\\
\bm{z}\cdot\bm{n}&=0\quad \mbox{on}\;\Gamma_{\text{out}}\label{eq:transport4}.
\end{align}
Multiplying \eqref{eq:transport1} by a test function $\bm{\psi}\in W_h$ and performing integration by parts yield
\begin{equation}\label{eq:var-2}
(K^{-1} \bm{z}, \bm{\psi}) - \sum_{\tau \in \mathcal{T}_h}(c, \nabla\cdot \bm{\psi})_\tau + \sum_{e \in \mathcal{F}_{pr}} (c, \jump{\bm{\psi}} \cdot \bm{n}_e)_e  = 0.
\end{equation}
Then multiplying \eqref{eq:transport2} by a test function $q\in U_h$ and performing integration by parts imply that
\begin{equation}
\begin{split}
&(\phi \frac{\partial c}{\partial t},q)+(cf^-,q)- \sum_{\tau \in \mathcal{T}_h} (\bm{u} c+ \bm{z}, \nabla q)_\tau + \sum_{e \in \mathcal{F}_{dl}} \int_e\jump{\widetilde{c} q } \bm{u}\cdot \bm{n}_e~ds + \sum_{e \in \mathcal{F}_{dl}} \int_e \bm{z}\cdot \bm{n}_e\jump{q}  ~ds \\
&\; +
(\bm{u}\cdot\bm{n}c,q)_{\Gamma_{\text{out}}} = (\phi s, q)-
(c_{\text{in}}\bm{u}\cdot\bm{n},q)_{\Gamma_{\text{in}}}+(\hat{c}f^+,q),
\end{split}
\label{eq:derivation1}
\end{equation}
where we use the upwinding flux to define $\widetilde{c}$, namely
\begin{equation}\label{eq:up-I-c}
\widetilde{c}:= \left\{
\begin{aligned}
& c\mid_{\tau_1} \quad & & \text{ if } \bm{u} \cdot \bm{n}_e \ge 0 & & \quad \text{(outflow)}, \\
& c\mid_{\tau_2} \quad & & \text{ if } \bm{u} \cdot \bm{n}_e < 0 & & \quad \text{(inflow)}.
\end{aligned}
\right.
\end{equation}
Here $\tau_1$ and $\tau_2$ are the two triangles sharing the common edge $e$ and $\bm{n}_e$ points from $\tau_1$ to $\tau_2$.

Thereby, we can rewrite the second term in \eqref{eq:derivation1} as
\begin{align*}
\sum_{e \in \mathcal{F}_{dl}} \int_e\jump{\widetilde{c} q} \bm{u}\cdot \bm{n}~ds=\sum_{e \in \mathcal{F}_{dl}}\int_e\avg{c}\jump{q} \bm{u}\cdot \bm{n}~ds+\frac{1}{2}\sum_{e \in \mathcal{F}_{dl}}\int_e\jump{c}\jump{q} |\bm{u}\cdot \bm{n}|~ds.
\end{align*}
Then the discrete formulation for \eqref{eq:transport1}-\eqref{eq:transport4} reads as follows: Find $(\bm{z}_h,c_h)\in W_h\times U_h$ such that
\begin{align}
&(K^{-1} \bm{z}_h, \bm{\psi}) - T_h^*(c_h,\bm{\psi})=0,\label{eq:discrete1}\\
&(\phi \frac{\partial c_h}{\partial t},q)+(c_hf^-,q)+T_h(\bm{z}_h,q)-(\bm{u}_hc_h,\nabla q)+S_h(c_h,q)+
(\bm{u}_hc_h\cdot\bm{n},q)_{\Gamma_{\text{out}}}\nonumber\\
&\;+\frac{1}{2}((\bm{u}-\bm{u}_h)\cdot\bm{n}c_h,q)_{\Gamma_{\text{out}}}
-\frac{1}{2}((\bm{u}-\bm{u}_h)\cdot\bm{n}c_h,q)_{\Gamma_{\text{in}}} = (\phi s, q)-
(c_{\text{in}}\bm{u}\cdot\bm{n},q)_{\Gamma_{\text{in}}}+(\hat{c}f^+,q)\label{eq:discrete2}
\end{align}
for any $(\bm{\psi},q)\in W_h\times U_h$. Note that the last two terms on the left-hand side of \eqref{eq:discrete2} are the boundary correction terms, which are used to improve the stability estimate. Here, the bilinear forms are defined by
\begin{align*}
T_h^*(c_h,\bm{\psi})&=\sum_{\tau \in \mathcal{T}_h}(c_h, \nabla\cdot \bm{\psi})_\tau - \sum_{e \in \mathcal{F}_{pr}} (c_h, \jump{\bm{\psi}} \cdot \bm{n})_e,\\
T_h(\bm{z}_h,q)&=-\sum_{\tau \in \mathcal{T}_h}(\bm{z}_h,\nabla q)_\tau+\sum_{e\in \mathcal{F}_{dl}}(\bm{z}_h\cdot\bm{n},\jump{q})_e,\\
S_h(c_h,q)&=\sum_{e \in \mathcal{F}_{dl}}\int_e\avg{c_h}\jump{q } \bm{u}_h\cdot \bm{n}~ds+\frac{1}{2}\sum_{e \in \mathcal{F}_{dl}}\int_e\jump{c_h}\jump{q} |\bm{u}_h\cdot \bm{n}|~ds.
\end{align*}
The initial condition $c_h(\cdot,0)$ is defined as $c_h(\cdot,0)=c_h^0$, where $c_h^0$ is the $L^2$-orthogonal projection of $c^0$.

Integration by parts implies the discrete adjoint property
\begin{align}
T_h(\bm{\psi},q)=T_h^*(q,\bm{\psi})\quad \forall (\bm{\psi},q)\in W_h\times U_h.\label{eq:adjointTh}
\end{align}
Let
\begin{align*}
A_{\bm{u}_h}(\bm{z}_h,c_h;\bm{\psi}_h,q_h)&=(K^{-1} \bm{z}_h, \bm{\psi}_h) - T_h^*(c_h,\bm{\psi}_h)+(\phi \frac{\partial c_h}{\partial t},q_h)+(c_hf^-,q)+T_h(\bm{z}_h,q_h)\\
&\;-(\bm{u}_hc_h,\nabla q_h)+S_h(c_h,q_h)+
(c_h\bm{u}_h\cdot\bm{n},q_h)_{\Gamma_{\text{out}}}+\frac{1}{2}((\bm{u}-\bm{u}_h)\cdot\bm{n}c_h,q_h)_{\Gamma_{\text{out}}}\\
&\;-\frac{1}{2}((\bm{u}-\bm{u}_h)\cdot\bm{n}c_h,q_h)_{\Gamma_{\text{in}}}.
\end{align*}
Then, it follows from \eqref{eq:discrete1}-\eqref{eq:discrete2} that
\begin{align}
A_{\bm{u}_h}(\bm{z}_h,c_h;\bm{\psi}_h,q_h)=(\phi s, q_h)-
(c_{\text{in}}\bm{u}\cdot\bm{n},q_h)_{\Gamma_{\text{in}}}+(\hat{c}f^+,q_h).\label{eq:Ah}
\end{align}
Replacing $(\bm{z}_h,c_h)$ by $(\bm{z},c)$ in \eqref{eq:Ah}, we can infer that the weak solution $(\bm{z},c)$ satisfies
\begin{align*}
A_{\bm{u}}(\bm{z},c;\bm{\psi}_h,q_h)=(\phi s, q_h)-
(c_{\text{in}}\bm{u}\cdot\bm{n},q_h)_{\Gamma_{\text{in}}}+(\hat{c}f^+,q_h),
\end{align*}
where
\begin{align*}
A_{\bm{u}}(\bm{z},c;\bm{\psi}_h,q_h)&=(K^{-1} \bm{z}, \bm{\psi}_h) - T_h^*(c,\bm{\psi}_h)+(\phi \frac{\partial c}{\partial t},q_h)+(cf^-,q_h)+T_h(\bm{z},q_h)-(\bm{u}c,\nabla q_h)\\
&\;+S_h(c,q_h)+
(c\bm{u}\cdot\bm{n},q_h)_{\Gamma_{\text{out}}}.
\end{align*}
Thus
\begin{align}
A_{\bm{u}}(\bm{z},c;\bm{\psi}_h,q_h)-A_{\bm{u}_h}(\bm{z}_h,c_h;\bm{\psi}_h,q_h)=0\quad \forall (\bm{\psi}_h,q_h)\in W_h\times U_h .
\label{eq:errorAh}
\end{align}
Similar to \eqref{eq:inf-sup}, the following inf-sup condition holds
\begin{align}
\|q\|_{1,h,*}\leq C\sup_{\bm{\psi}\in W_h}\frac{T_h(\bm{\psi},q)}{\|\bm{\psi}\|_{0}}\quad \forall q\in U_h.\label{eq:inf-supTh}
\end{align}

%
%

The following lemma is found to be useful for later analysis (cf. \cite{Cockburn99}).
\begin{lemma}\label{lemma:inequalityTime}
Suppose that for all $T>0$
\begin{align*}
\chi^2(T)+R(T)\leq A(T)+2\int_0^T B(t) \chi(t)\;dt,
\end{align*}
where $R, A$ and $B$ are nonnegative functions. Then
\begin{align*}
\sqrt{\chi^2+R(T)}\leq \sup_{0\leq t\leq T}A^{1/2}(t)+\int_0^T B(t)\;dt.
\end{align*}

\end{lemma}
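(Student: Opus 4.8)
The plan is to discard the nonnegative term $R$ on the left, reduce the hypothesis to a closed integral inequality in $\chi$ alone, and then turn it into a scalar differential inequality whose integration recovers the \emph{sharp} constant (a single factor multiplying $\int_0^T B$). Fix $T>0$; since the hypothesis holds with $t$ in place of $T$ for every $t\le T$, and $R(t)\ge0$, we get $\chi^2(t)\le A(t)+2\int_0^t B(s)\chi(s)\,ds$ for all $t\le T$. I emphasize that the naive route---taking the supremum over $t\le T$, writing $m:=\sup_{t\le T}\chi(t)$ and solving the quadratic $m^2\le \bar A+2\bar B\,m$ with $\bar A:=\sup_{t\le T}A(t)$ and $\bar B:=\int_0^T B$---only yields $m\le \bar A^{1/2}+2\bar B$, losing a factor of two on the integral. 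Getting the stated sharp bound is precisely what forces the monotone-majorant/ODE argument below.

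First I would introduce the nondecreasing majorant $E(t):=\sup_{0\le\tau\le t}A(\tau)$ and set $F(t):=E(t)+2\int_0^t B(s)\chi(s)\,ds$. Because $B,\chi\ge0$, the function $F$ is nondecreasing and satisfies $F(t)\ge E(t)\ge A(t)\ge0$; combining this with the reduced hypothesis gives $\chi^2(t)\le A(t)+2\int_0^t B\chi\le F(t)$, hence $\chi(t)\le\sqrt{F(t)}$. This is the bound I will feed back into $F$.

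The core step is a differential inequality for $\sqrt{F}$. Differentiating, $F'=E'+2B\chi\le E'+2B\sqrt{F}$, and dividing by $2\sqrt{F}$ gives $\frac{d}{dt}\sqrt{F}\le \frac{E'}{2\sqrt{F}}+B\le \frac{E'}{2\sqrt{E}}+B=\frac{d}{dt}\sqrt{E}+B$, where I used $F\ge E$ together with $E'\ge0$ (since $E$ is nondecreasing). Integrating from $0$ to $T$ and using $F(0)=E(0)$ yields $\sqrt{F(T)}\le\sqrt{E(T)}+\int_0^T B(t)\,dt$. Finally, $\chi^2(T)+R(T)\le A(T)+2\int_0^T B\chi\le F(T)$, so $\sqrt{\chi^2(T)+R(T)}\le\sqrt{F(T)}\le\sqrt{E(T)}+\int_0^T B(t)\,dt=\sup_{0\le t\le T}A^{1/2}(t)+\int_0^T B(t)\,dt$, using that $\sqrt{\,\cdot\,}$ is increasing so $\sqrt{E(T)}=\sup_{0\le t\le T}A^{1/2}(t)$. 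This is exactly the asserted estimate.

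The main obstacle is not conceptual but technical, and it is entirely the justification of the last two steps: the denominator $\sqrt{F}$ may vanish (when $A$ and $\chi$ start at zero), and the running supremum $E$ need not be differentiable everywhere, so the fundamental theorem of calculus must be applied with care. I would resolve the first point by carrying the whole chain with $F+\epsilon$ and $E+\epsilon$ in place of $F$ and $E$ for $\epsilon>0$, which keeps the denominator bounded below by $\sqrt{\epsilon}$, and then letting $\epsilon\to0^+$. The second point is harmless in the intended application, where the data $A$ are continuous (indeed absolutely continuous); then $E$, being the running supremum of an absolutely continuous function, is absolutely continuous, and $\sqrt{E+\epsilon}$ and $\sqrt{F+\epsilon}$ are Lipschitz functions of absolutely continuous arguments, so the integration of the differential inequality is rigorous.
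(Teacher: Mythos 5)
Your proof is correct, but note that the paper does not actually prove this lemma: it is quoted from the literature (cf.\ \cite{Cockburn99}) with no argument given, so there is no in-paper proof to diverge from. Your argument --- majorize via the nondecreasing function $F(t)=E(t)+2\int_0^t B\chi\,ds$ with $E(t)=\sup_{0\le\tau\le t}A(\tau)$, deduce $|\chi|\le\sqrt{F}$, and integrate the differential inequality $(\sqrt{F})'\le(\sqrt{E})'+B$ --- is essentially the standard proof behind that citation, and your preliminary observation that the naive route (taking suprema and solving the quadratic $m^2\le\bar A+2\bar B m$) only yields $m\le\bar A^{1/2}+2\bar B$ correctly identifies why the ODE argument is needed for the sharp constant. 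The key step $E'/(2\sqrt{F})\le E'/(2\sqrt{E})$ is valid precisely because $E'\ge0$ and $F\ge E$, and the $\epsilon$-regularization handles the vanishing denominator properly. One simplification worth knowing: since $T$ is fixed at the outset, you can freeze the majorant at the constant $\bar A:=\sup_{0\le t\le T}A(t)$ and work with $\psi(t)=\bigl(\epsilon+\bar A+2\int_0^t B\chi\,ds\bigr)^{1/2}$, for which $\psi'=B\chi/\psi\le B$ a.e.\ because $|\chi|\le\psi$; integrating gives $\psi(T)\le\sqrt{\epsilon+\bar A}+\int_0^T B\,dt$ directly, and since $\chi^2(T)+R(T)\le\psi^2(T)-\epsilon$ one concludes by letting $\epsilon\to0^+$. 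This variant dispenses entirely with differentiating the running supremum $E$, which is the only place where your argument needs the extra absolute-continuity hypothesis on $A$ that the lemma as stated does not impose. (A last small point: $\chi$ is not assumed nonnegative, so the inequality your chain actually uses is $B\chi\le B|\chi|\le B\sqrt{F}$; your steps support this, but it is cleaner to write $|\chi|$ throughout.)
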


For later analysis, we define
\begin{align*}
\|(c_h,\bm{z}_h)\|_c^2=\|\phi^{1/2}c_h(T)\|_0^2+2\int_0^T \|K^{-1/2}\bm{z}_h\|_0^2\;dt.
\end{align*}

\begin{theorem}\label{thm:stability}
(stability).
Let $(\bm{z}_h,c_h)\in W_h\times U_h$ be the discrete solution of \eqref{eq:discrete1}-\eqref{eq:discrete2}. Then, the following stability result holds
\begin{equation}
\begin{split}
\|(c_h,\bm{z}_h)\|_c&\leq \Big( \phi^*\|c^0\|_0^2+\int_0^T ( (|\bm{u}\cdot\bm{n}|,c_{\textnormal{in}}^2)_{\Gamma_{\textnormal{in}}}+(\hat{c}^2,f^+)_{\Omega_D})\;dt\Big)^{\frac{1}{2}}+\int_0^T \|\phi^{\frac{1}{2}}s\|_0\;dt.
\end{split}
\end{equation}

\end{theorem}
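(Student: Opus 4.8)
The plan is to derive the stability estimate by testing the scheme with the discrete solution itself, exploiting the adjoint property between $T_h$ and $T_h^*$ to cancel the flux terms, and then carefully handling the convection and boundary terms using the strong mass conservation $\nabla\cdot(\bm{u}-\bm{u}_h)=0$ established in the mass-conservation lemma. The final step will invoke Lemma~\ref{lemma:inequalityTime} to convert a Gronwall-type inequality into the stated bound.

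\textbf{Step 1 (test and cancel).} First I would take $(\bm{\psi}_h,q_h)=(\bm{z}_h,c_h)$ in the scheme, i.e.\ set $\bm{\psi}=\bm{z}_h$ in \eqref{eq:discrete1} and $q=c_h$ in \eqref{eq:discrete2}, and add the two equations. The adjoint property \eqref{eq:adjointTh} gives $T_h(\bm{z}_h,c_h)=T_h^*(c_h,\bm{z}_h)$, so the mixed flux terms $-T_h^*(c_h,\bm{z}_h)+T_h(\bm{z}_h,c_h)$ cancel, leaving the dissipation $\|K^{-1/2}\bm{z}_h\|_0^2$ together with the time-derivative term $(\phi\,\partial_t c_h,c_h)=\tfrac12\frac{d}{dt}\|\phi^{1/2}c_h\|_0^2$.

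\textbf{Step 2 (convection and upwinding).} The heart of the argument is showing that the convective contribution
\[
-(\bm{u}_h c_h,\nabla c_h)+S_h(c_h,c_h)+(c_h\bm{u}_h\cdot\bm{n},c_h)_{\Gamma_{\textnormal{out}}}
\]
is nonnegative up to controllable lower-order terms. Integrating $-(\bm{u}_h c_h,\nabla c_h)$ by parts element-wise and using $\nabla\cdot\bm{u}_h=\nabla\cdot\bm{u}$ together with the jump structure of $S_h$, the interior edge contributions reorganize into $\tfrac12\sum_{e\in\mathcal{F}_{dl}}\int_e|\bm{u}_h\cdot\bm{n}|\,\jump{c_h}^2\,ds\ge0$ (the standard upwinding coercivity), while the boundary terms combine with the correction terms $\pm\tfrac12((\bm{u}-\bm{u}_h)\cdot\bm{n}\,c_h,c_h)_{\Gamma_{\textnormal{out/in}}}$. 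The purpose of those correction terms is precisely to replace the discrete normal velocity $\bm{u}_h\cdot\bm{n}$ on $\partial\Omega$ by the true velocity $\bm{u}\cdot\bm{n}$, so that the boundary dissipation becomes $\tfrac12(|\bm{u}\cdot\bm{n}|,c_h^2)_{\Gamma_{\textnormal{out}}}+\tfrac12(|\bm{u}\cdot\bm{n}|,c_h^2)_{\Gamma_{\textnormal{in}}}\ge0$ and the inflow data on the right-hand side can be bounded using the \emph{true} velocity. This is the step that makes the final estimate depend only on $\bm{u}$ on $\Gamma_{\textnormal{in}}$ rather than on $\bm{u}_h$.

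\textbf{Step 3 (right-hand side and integration in time).} After collecting Steps 1--2, I would arrive at
\[
\tfrac12\tfrac{d}{dt}\|\phi^{1/2}c_h\|_0^2+\|K^{-1/2}\bm{z}_h\|_0^2
\le (\phi s,c_h)-(c_{\textnormal{in}}\bm{u}\cdot\bm{n},c_h)_{\Gamma_{\textnormal{in}}}+(\hat c f^+,c_h),
\]
discarding the nonnegative boundary dissipation and the term $(c_hf^-,c_h)\ge0$ on the left. Integrating from $0$ to $T$, bounding $(c_{\textnormal{in}}\bm{u}\cdot\bm{n},c_h)_{\Gamma_{\textnormal{in}}}$ and $(\hat c f^+,c_h)$ by Cauchy--Schwarz against the boundary and volume dissipation already present (or against $\chi(t)=\|\phi^{1/2}c_h(t)\|_0$), and writing the source term as $2\int_0^T B(t)\chi(t)\,dt$ with $B=\|\phi^{1/2}s\|_0$, I would match the hypothesis of Lemma~\ref{lemma:inequalityTime} with $\chi^2(T)+R(T)=\|\phi^{1/2}c_h(T)\|_0^2+2\int_0^T\|K^{-1/2}\bm{z}_h\|_0^2\,dt=\|(c_h,\bm{z}_h)\|_c^2$ and $A(T)$ the data terms. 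Applying the lemma then yields the stated bound.

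\textbf{The main obstacle} will be the bookkeeping in Step 2: tracking how the element-wise integration by parts, the two pieces of $S_h$, and the two boundary correction terms recombine so that all the discrete-velocity boundary contributions are converted into true-velocity terms while every surviving quadratic boundary term carries the correct sign. Getting the factors of $\tfrac12$ and the inflow/outflow splitting exactly right is where the argument genuinely uses the mass conservation $\nabla\cdot(\bm{u}-\bm{u}_h)=0$ and the correction terms, and it is what distinguishes this sharp estimate from a cruder one depending on $\bm{u}_h$.
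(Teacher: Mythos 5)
Your proposal follows essentially the same route as the paper's proof: testing with $(\bm{z}_h,c_h)$, cancelling the flux terms via the adjoint property \eqref{eq:adjointTh}, integrating the convection term by parts element-wise using $\bm{u}_h\in H(\tdiv;\Omega)$, recombining with $S_h$ to extract the upwind dissipation, using the boundary correction terms to convert $\bm{u}_h\cdot\bm{n}$ into $\bm{u}\cdot\bm{n}$ on $\Gamma_{\textnormal{in}}\cup\Gamma_{\textnormal{out}}$, and concluding with Young's inequality and Lemma~\ref{lemma:inequalityTime}. One bookkeeping correction: you cannot simply discard $(c_hf^-,c_h)\ge 0$ while also dropping $\tfrac12(c_h^2,\nabla\cdot\bm{u}_h)=\tfrac12(c_h^2,f)_{\Omega_D}$ (the latter is not sign-definite on its own); as in the paper, these must be kept together as $\tfrac12(c_h^2,f^++f^-)_{\Omega_D}\ge 0$, whose $f^+$ part is precisely what absorbs the $\tfrac12(c_h^2,f^+)_{\Omega_D}$ produced by Cauchy--Schwarz applied to $(\hat{c}f^+,c_h)$, yielding the sharp constant-free bound in the theorem.
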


\begin{proof}

From the definition of $A_{\bm{u}_h}(\cdot,\cdot;\cdot,\cdot)$ and \eqref{eq:adjointTh}, we have
\begin{align*}
A_{\bm{u}_h}(\bm{z}_h,c_h;\bm{z}_h,c_h)&=(K^{-1} \bm{z}_h, \bm{z}_h) - T_h^*(c_h,\bm{z}_h)+(\phi \frac{\partial c_h}{\partial t},c_h)+(c_hf^-,c_h)+T_h(\bm{z}_h,c_h)\\
&\;-(\bm{u}_hc_h,\nabla c_h)+S_h(c_h,c_h)+
(\bm{u}_h\cdot\bm{n}c_h,c_h)_{\Gamma_{\text{out}}}\\
&\;+\frac{1}{2}((\bm{u}-\bm{u}_h)\cdot\bm{n}c_h,c_h)_{\Gamma_{\text{out}}}
-\frac{1}{2}((\bm{u}-\bm{u}_h)\cdot\bm{n}c_h,c_h)_{\Gamma_{\text{in}}}\\
&=(K^{-1} \bm{z}_h, \bm{z}_h) +(\phi \frac{\partial c_h}{\partial t},c_h)-(\bm{u}_hc_h,\nabla c_h)+(c_hf^-,c_h)+S_h(c_h,c_h)\\
&\;+
(\bm{u}_h\cdot\bm{n}c_h,c_h)_{\Gamma_{\text{out}}}+\frac{1}{2}((\bm{u}-\bm{u}_h)\cdot\bm{n}c_h,c_h)_{\Gamma_{\text{out}}}
-\frac{1}{2}((\bm{u}-\bm{u}_h)\cdot\bm{n}c_h,c_h)_{\Gamma_{\text{in}}}.
\end{align*}
The third term on the right-hand side can be recast into the following form via integration by parts
\begin{align*}
(\bm{u}_hc_h,\nabla c_h)&=\frac{1}{2}\sum_{\tau\in \mathcal{T}_{h}}(\bm{u}_h\cdot\bm{n},c_h^2)_{\partial \tau}-\frac{1}{2}\sum_{\tau\in \mathcal{T}_{h}}(c_h^2,\nabla \cdot\bm{u}_h)_\tau\\
&=\sum_{e\in \mathcal{F}_{dl}}(\bm{u}_h\cdot\bm{n},\jump{c_h}\avg{c_h})_{e}
+\frac{1}{2}(\bm{u}_h\cdot\bm{n},c_hc_h)_{\partial \Omega}-\frac{1}{2}(c_h^2,\nabla \cdot\bm{u}_h),
\end{align*}
where we use the fact that $\bm{u}_h\in H(\tdiv;\Omega)$.

Thus, we have
\begin{align*}
A_{\bm{u}_h}(\bm{z}_h,c_h;\bm{z}_h,c_h)&=\|K^{-\frac{1}{2}} \bm{z}_h\|_0^2 +(\phi \frac{\partial c_h}{\partial t},c_h)+\frac{1}{2}(c_h^2,\nabla \cdot\bm{u}_h)+(c_hf^-,c_h)\\
&\;+\frac{1}{2}\sum_{e \in \mathcal{F}_{dl}}\int_e\jump{c_h}^2 |\bm{u}_h\cdot \bm{n}_e|~ds+\frac{1}{2}(\bm{u}\cdot\bm{n}c_h,c_h)_{\Gamma_{\text{out}}}-\frac{1}{2}(\bm{u}\cdot\bm{n}c_h,c_h)_{\Gamma_{\text{in}}}.
\end{align*}
Integrating over $T$ yields
\begin{equation}
\begin{split}
&\int_0^T A_{\bm{u}_h}(\bm{z}_h,c_h;\bm{z}_h,c_h)\;dt=\int_0^T\Big(\|K^{-\frac{1}{2}} \bm{z}_h\|_0^2 +(\phi \frac{\partial c_h}{\partial t},c_h)+\frac{1}{2}(c_h^2,\nabla \cdot\bm{u}_h)+(c_hf^-,c_h)\\
&\;+\frac{1}{2}\sum_{e \in \mathcal{F}_{dl}}\int_e\jump{c_h}^2 |\bm{u}_h\cdot \bm{n}|~ds+\frac{1}{2}(\bm{u}\cdot\bm{n}c_h,c_h)_{\Gamma_{\text{out}}}-\frac{1}{2}(\bm{u}\cdot\bm{n}c_h,c_h)_{\Gamma_{\text{in}}}\Big)\;dt\\
&=\frac{1}{2}(\|\phi^{\frac{1}{2}}c_h(T)\|_0^2-\|\phi^{\frac{1}{2}}c_h(0)\|_0^2)
+\int_0^T\|K^{-\frac{1}{2}} \bm{z}_h\|_0^2+\frac{1}{2}\int_0^T(c_h^2,\nabla \cdot\bm{u}_h)+\int_0^T(c_hf^-,c_h)\;dt \\
&\;+\frac{1}{2}\int_0^T\sum_{e \in \mathcal{F}_{dl}}\int_e\jump{c_h}^2
 |\bm{u}_h\cdot \bm{n}|~ds+\frac{1}{2}\int_0^T(|\bm{u}\cdot\bm{n}|c_h,c_h)_{\partial \Omega}\;dt.
\end{split}
\label{eq:Auh}
\end{equation}
Therefore, it follows from \eqref{eq:Ah} that
\begin{equation}
\begin{split}
&\frac{1}{2}(\|\phi^{\frac{1}{2}}c_h(T)\|_0^2-\|\phi^{\frac{1}{2}}c_h(0)\|_0^2)+\int_0^T\|K^{-\frac{1}{2}} \bm{z}_h\|_0^2\;dt+\frac{1}{2}\int_0^T(c_h^2,\nabla \cdot\bm{u}_h)\;dt+\int_0^T(c_h^2, f^-)\;dt\\
&\;+\frac{1}{2}\int_0^T\sum_{e \in \mathcal{F}_{dl}}\int_e\jump{c_h}^2 |\bm{u}_h\cdot \bm{n}|~ds\;dt+\frac{1}{2}\int_0^T(|\bm{u}\cdot\bm{n}|c_h,c_h)_{\partial \Omega}\;dt\\
&=\int_0^T ((\phi s, c_h)+(\hat{c}f^+,c_h)-
(c_{\text{in}}\bm{u}\cdot\bm{n},c_h)_{\Gamma_{\text{in}}})\;dt.
\end{split}
\label{eq:ChT}
\end{equation}
Note that $\nabla\cdot \bm{u}_h\mid_{\Omega_B}=0$ and $\nabla\cdot \bm{u}_h\mid_{\Omega_D}=f$, we can deduce that
\begin{align*}
\frac{1}{2}(c_h^2,\nabla \cdot\bm{u}_h)+(c_hf^-,c_h)=\left\{
                              \begin{array}{ll}
                                \frac{1}{2}(c_h^2,f)_{\Omega_D}, & \hbox{$\nabla \cdot\bm{u}_h\geq 0$}, \\
                                -\frac{1}{2}(c_h^2,f)_{\Omega_D}, & \hbox{$\nabla \cdot\bm{u}_h<0$}.
                              \end{array}
                            \right.
\end{align*}
%
Furthermore, an application of the Cauchy-Schwarz inequality implies
\begin{align*}
(\hat{c}f^+,c_h)\leq (\hat{c}^2,f^+)_{\Omega_D}^{\frac{1}{2}}(c_h^2,f^+)_{\Omega_D}^{\frac{1}{2}}\leq
\frac{1}{2}\Big((\hat{c}^2,f^+)_{\Omega_D}+(c_h^2,f^+)_{\Omega_D}\Big).
\end{align*}
Therefore, we can infer that
\begin{equation}
\begin{split}
\frac{1}{2}\|\phi^{\frac{1}{2}}c_h(T)\|_0^2+\int_0^T\|K^{-\frac{1}{2}} \bm{z}_h\|_0^2&\leq \frac{1}{2}\|\phi^{\frac{1}{2}}c_h(0)\|_0^2+\frac{1}{2}\int_0^T \Big((|\bm{u}\cdot\bm{n}|,c_{\text{in}}^2)_{\Gamma_{\text{in}}}+(\hat{c}^2,f^+)_{\Omega_D}\Big)\;dt\\
&\;+\int_0^T\|\phi^{\frac{1}{2}}s\|_0\|\phi^{\frac{1}{2}}c_h\|_0\;dt,
\end{split}
\label{eq:ch}
\end{equation}
where we use Young's inequality for the last term on the right-hand side of \eqref{eq:ChT}.


Recall that $c_h(0)$ is $L^2$-orthogonal projection of $c^0$, thereby it holds
\begin{align*}
\|\phi^{\frac{1}{2}}c_h(0)\|_0\leq (\phi^*)^{\frac{1}{2}}\|c^0\|_0,
\end{align*}
Then an application of Lemma~\ref{lemma:inequalityTime} completes the proof.

\end{proof}

\begin{remark}
We can observe from Theorem~\ref{thm:stability} that our stability estimate is sharp in the sense that there is no undetermined constant in front of the right-hand side, which benefits from the strong mass conservation of the proposed scheme. On the other hand, the introduction of the boundary correction terms $\frac{1}{2}((\bm{u}-\bm{u}_h)\cdot\bm{n}c_h,q)_{\Gamma_{\textnormal{out}}}$ and $
-\frac{1}{2}((\bm{u}-\bm{u}_h)\cdot\bm{n}c_h,q)_{\Gamma_{\textnormal{in}}}$ improves the stability estimate for the transport equation. Indeed, the stability estimate depends on the exact velocity on the inflow boundary rather than on the approximated velocity.

\end{remark}

%

%
%

In the proof of the next lemma, we still use $I_h$ and $J_h$ to represent the interpolation error estimates which follows the same definitions given in \eqref{eq:Ih} and \eqref{eq:Jh} but extended to the global domain $\Omega$.

\begin{theorem}\label{thm:convergence-transport}
Let $(\bm{z}_h,c_h)\in W_h\times U_h$ be the discrete solution of \eqref{eq:discrete1}-\eqref{eq:discrete2}. Then, the following convergence error estimate holds
\begin{align*}
\|(c-c_h,\bm{z}-\bm{z}_h)\|_c &\leq C \Big(h^{k+1}\|c\|_{C(0,T;H^{k+1}(\Omega))}+h^{k+1}\Big(\int_0^T (\|c\|_{k+1}+\|c_t\|_{k+1}\\
&\;+\|\bm{z}\|_{k+1}+\|\bm{u}\|_{k+1}+\|p\|_{k+1}+\epsilon^{-\frac{1}{2}}\|\bm{L}\|_{k+1,\Omega_B}+\|\bm{u}\|_{k+1,\partial \Omega})\;dt\Big)\Big),
\end{align*}
where
\begin{align*}
\|c\|_{C(0,T;H^m(\Omega))}=\max_{0\leq t\leq T}\|c(t)\|_{H^m(\Omega)}.
\end{align*}

\end{theorem}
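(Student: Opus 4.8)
The plan is to re-run the energy argument of Theorem~\ref{thm:stability} on the discrete error, treating the interpolation and velocity-consistency contributions as data, and then to close with the Gr\"onwall-type Lemma~\ref{lemma:inequalityTime}. First I would split each error through the (globally extended) interpolants of \eqref{eq:Ih}--\eqref{eq:Jh}: write $c-c_h=\eta_c+\xi_c$ and $\bm{z}-\bm{z}_h=\bm{\eta}_z+\bm{\xi}_z$ with $\eta_c=c-I_hc$, $\bm{\eta}_z=\bm{z}-J_h\bm{z}$ and $\xi_c=I_hc-c_h\in U_h$, $\bm{\xi}_z=J_h\bm{z}-\bm{z}_h\in W_h$. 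Since $\|(\cdot,\cdot)\|_c$ satisfies a triangle inequality and the interpolation parts $\eta_c,\bm{\eta}_z$ are controlled directly by the approximation estimates (the global analogues of \eqref{eq:Perror}--\eqref{eq:uerror}), it suffices to bound the discrete error $\|(\xi_c,\bm{\xi}_z)\|_c$.

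Next I would turn the orthogonality relation \eqref{eq:errorAh} into an equation for $(\bm{\xi}_z,\xi_c)$. Using the bilinearity of $A_{\bm{u}_h}$ together with \eqref{eq:errorAh},
\[
A_{\bm{u}_h}(\bm{\xi}_z,\xi_c;\bm{\psi}_h,q_h)
=-A_{\bm{u}_h}(\bm{\eta}_z,\eta_c;\bm{\psi}_h,q_h)
+\big(A_{\bm{u}_h}-A_{\bm{u}}\big)(\bm{z},c;\bm{\psi}_h,q_h).
\]
The first group is the interpolation defect; the orthogonality of $I_h$ and $J_h$ coming from \eqref{eq:Ih}--\eqref{eq:Jh} (exactly as in \eqref{eq:bh1}--\eqref{eq:Bh2}) kills the $T_h^*(\eta_c,\cdot)$ and $T_h(\bm{\eta}_z,\cdot)$ terms, so only the zero-order, time-derivative, convection and boundary pieces survive. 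The second group $(A_{\bm{u}_h}-A_{\bm{u}})(\bm{z},c;\cdot)$ collects precisely the velocity-consistency terms, all proportional to $\bm{u}-\bm{u}_h$: the convection defect $((\bm{u}-\bm{u}_h)c,\nabla q_h)$, the $S_h$-defect, and the boundary defects on $\Gamma_{\textnormal{out}}$ and $\Gamma_{\textnormal{in}}$.

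I would then test with $\bm{\psi}_h=\bm{\xi}_z$, $q_h=\xi_c$. The diagonal term $A_{\bm{u}_h}(\bm{\xi}_z,\xi_c;\bm{\xi}_z,\xi_c)$ is handled verbatim as in Theorem~\ref{thm:stability}: the adjoint identity \eqref{eq:adjointTh} cancels $T_h^*/T_h$, while integration by parts on the convection term together with $\bm{u}_h\in H(\tdiv;\Omega)$ and $\nabla\cdot\bm{u}_h|_{\Omega_B}=0$, $\nabla\cdot\bm{u}_h|_{\Omega_D}=f$ produces the nonnegative volume term $\tfrac12(\xi_c^2,|f|)_{\Omega_D}$, the upwind jump term, and the nonnegative boundary term $\tfrac12(|\bm{u}\cdot\bm{n}|\xi_c,\xi_c)_{\partial\Omega}$. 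After integrating in time and bounding $\xi_c(0)$ by interpolation, I would estimate the right-hand side by Cauchy--Schwarz and Young: the interpolation defects through the global analogues of \eqref{eq:Perror}--\eqref{eq:uerror} (including $\partial_t\eta_c=c_t-I_hc_t$), and the velocity-consistency defects through the velocity error of Theorem~\ref{thm:L2} and the boundary estimate \eqref{eq:interpGin}. Casting the $\xi_c$-factors into the form $B(t)\|\phi^{1/2}\xi_c\|_0$ and the rest into $A(t)$, Lemma~\ref{lemma:inequalityTime} yields the $\mathcal{O}(h^{k+1})$ bound on $\|(\xi_c,\bm{\xi}_z)\|_c$, and a final triangle inequality completes the estimate.

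The hard part will be the convection terms $((\bm{u}-\bm{u}_h)c,\nabla\xi_c)$ and $-(\bm{u}_h\eta_c,\nabla\xi_c)$, which expose $\nabla\xi_c$ that the energy on the left does not control. The resolution is to integrate these by parts and invoke the strong mass conservation \eqref{eq:massconservation}, namely $\nabla\cdot(\bm{u}-\bm{u}_h)=0$, together with $\bm{u}_h\in H(\tdiv;\Omega)$, so that the volume contributions collapse to $((\bm{u}-\bm{u}_h)\cdot\nabla c,\xi_c)$ and the dual-edge jumps telescope; the residual boundary contributions are then absorbed by the boundary correction terms, engineered so that the velocity error appears on $\partial\Omega$ only against the exact or interpolated concentration. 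This is exactly where both the sharpness of the stability (dependence on the true inflow velocity) and the optimal $\mathcal{O}(h^{k+1})$ rate originate, and should any leftover $\|\xi_c\|_{1,h,*}$ factor arise, the inf-sup condition \eqref{eq:inf-supTh} supplies the missing control of $\nabla_h\xi_c$ via $T_h(\cdot,\xi_c)$.
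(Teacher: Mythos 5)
Your proposal is correct and follows essentially the same route as the paper's proof: the same global $I_h$/$J_h$ splitting of the error, an algebraically equivalent regrouping of the error equation \eqref{eq:errorAh} into an interpolation defect plus velocity-consistency terms, the stability-type energy identity for the diagonal term, integration by parts with $\nabla\cdot(\bm{u}-\bm{u}_h)=0$ and the boundary correction terms for the consistency part, and closure via Lemma~\ref{lemma:inequalityTime}, Theorem~\ref{thm:L2} and \eqref{eq:interpGin}. One caution on your ``hard part'': for the convection defect involving the interpolation error (the paper's $L_3$, with the exact velocity $\bm{u}$), integrating by parts would expose $\nabla(c-I_hc)\sim h^{k}$ and cost an order, so the paper instead bounds it directly against $\|c_h-I_hc\|_{1,h,*}$ and invokes the inf-sup superconvergence estimate \eqref{eq:Ihz} — i.e., the mechanism you list only as a fallback is in fact the main device for that term.
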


\begin{proof}

We can infer from \eqref{eq:errorAh} that
\begin{equation}
\begin{split}
&A_{\bm{u}_h}(\bm{z}_h-J_h\bm{z},c_h-I_hc;\bm{z}_h-J_h\bm{z},c_h-I_hc)
=A_{\bm{u}}(\bm{z}-J_h\bm{z},c-I_hc;\bm{z}_h-J_h\bm{z},c_h-I_hc)\\
&\;+A_{\bm{u}}(J_h\bm{z},I_hc;\bm{z}_h-J_h\bm{z},c_h-I_hc)
-A_{\bm{u}_h}(J_h\bm{z},I_hc;\bm{z}_h-J_h\bm{z},c_h-I_hc).
\end{split}
\label{eq:error1}
\end{equation}
Proceeding similarly to \eqref{eq:Auh}, we have
\begin{equation}
\begin{split}
&\int_0^T A_{\bm{u}_h}(\bm{z}_h-J_h\bm{z},c_h-I_hc;\bm{z}_h-J_h\bm{z},c_h-I_hc)=\frac{1}{2}(\|\phi^{\frac{1}{2}}(c_h-I_hc)(T)\|_0^2\\
&\;-\|\phi^{\frac{1}{2}}(c_h-I_hc)(0)\|_0^2)
+\int_0^T\|K^{-\frac{1}{2}} (\bm{z}_h-J_h\bm{z})\|_0^2+\frac{1}{2}\int_0^T((c_h-I_hc)^2,\nabla \cdot\bm{u}_h)\\
&\;+\int_0^T ((c_h-I_hc)^2,f^-)
+\frac{1}{2}\int_0^T\sum_{e \in \mathcal{F}_{dl}}\int_e\jump{c_h-I_hc}^2
 |\bm{u}_h\cdot \bm{n}|~ds\\
 &\;+\frac{1}{2}\int_0^T(|\bm{u}\cdot\bm{n}|(c_h-I_hc),c_h-I_hc)_{\partial \Omega}\;dt.
 \end{split}
 \label{eq:A}
\end{equation}
The first term on the right-hand side of \eqref{eq:error1} can be rewritten as follows by using the definitions of $I_h$ and $J_h$
\begin{align*}
&A_{\bm{u}}(\bm{z}-J_h\bm{z},c-I_hc;\bm{z}_h-J_h\bm{z},c_h-I_hc)\\
&=(K^{-1} (\bm{z}-J_h\bm{z}), \bm{z}_h-J_h\bm{z})+(\phi \frac{\partial (c-I_hc)}{\partial t},c_h-I_hc)-(\bm{u}(c-I_hc),\nabla (c_h-I_hc))\\
&\;+S_h(c-I_hc,c_h-I_hc)+
(\bm{u}\cdot\bm{n}(c-I_hc),c_h-I_hc)_{\Gamma_{\text{out}}}:=\sum_{i=1}^5 L_i.
\end{align*}
Now we estimate $L_i, i=1,\ldots,5$. First, the Cauchy-Schwarz inequality yields
\begin{align*}
L_1&\leq  \|K^{-1/2} (\bm{z}-J_h\bm{z})\|_0\|K^{-1/2}( \bm{z}_h-J_h\bm{z})\|_0,\\
L_2&= (\phi \frac{\partial (c-I_hc)}{\partial t},c_h-I_hc)\leq (\phi^*)^{1/2} \|\frac{\partial (c-I_hc)}{\partial t}\|_0\|\phi^{1/2}(c_h-I_hc)\|_0.
\end{align*}
The inf-sup condition \eqref{eq:inf-supTh} and the error equation \eqref{eq:errorAh} imply that
\begin{align}
\|c_h-I_hc\|_{1,h,*}\leq  CK_{\text{min}}^{-1/2}\|K^{-1/2}(\bm{z}-\bm{z}_h)\|_0.\label{eq:Ihz}
\end{align}
Then an application of the Cauchy-Schwarz inequality implies
\begin{align*}
\sum_{i=3}^4L_i:&=-(\bm{u}(c-I_hc),\nabla (c_h-I_hc))+\sum_{e \in \mathcal{F}_{dl}}\int_e\avg{c-I_hc}\jump{c_h-I_hc} \bm{u}\cdot \bm{n}~ds\\
&\;+\frac{1}{2}\sum_{e \in \mathcal{F}_{dl}}\int_e\jump{c-I_hc}\jump{c_h-I_hc }|\bm{u}\cdot \bm{n}|~ds\\
&\leq C (\|\bm{u}\|_{L^\infty(\Omega)}\|c-I_hc\|_0+\sum_{e\in \mathcal{F}_{dl}}\|\bm{u}\|_{L^\infty(e)}h^{1/2}\|c-I_hc\|_{0,e})
\|c_h-I_hc\|_{1,h,*}\\
&\leq C (\|\bm{u}\|_{L^\infty(\Omega)}\|c-I_hc\|_0+\sum_{e\in \mathcal{F}_{dl}}\|\bm{u}\|_{L^\infty(e)}h^{1/2}\|c-I_hc\|_{0,e})
\|K^{-1/2}(\bm{z}-\bm{z}_h)\|_0.
\end{align*}
The trace inequality \eqref{eq:Gammaz} implies that
\begin{align*}
\|c_h-I_hc\|_{0,\Gamma_{\text{out}}}\leq C \|c_h-I_hc\|_{1,h,*}\leq C K_{\text{min}}^{-\frac{1}{2}}\|K^{-\frac{1}{2}}(\bm{z}-\bm{z}_h)\|_0.
\end{align*}
Let $\bar{\bm{u}}$ denote the average value of $\bm{u}$ over $e\in \Gamma_{\text{out}}$, then we can infer from the definition of $I_h$, \eqref{eq:Gammaz} and \eqref{eq:Ihz} that
\begin{align*}
L_5=((c-I_hc)(\bm{u}-\bar{\bm{u}})\cdot\bm{n},c_h-I_hc)_{\Gamma_{\text{out}}}\leq C (\sum_{e\in \Gamma_{\text{out}}}h_e^{1/2} \|c-I_hc\|_{0,e}\|\bm{u}\|_{1,\infty,\tau_e})\|K^{-1/2}(\bm{z}-\bm{z}_h)\|_0,
\end{align*}
where we use $\tau_e$ to represent the element having the edge $e$, i.e., $e\subset \partial \tau_e$.

It remains to show the upper bound for the last two terms on the right-hand side of \eqref{eq:error1}. First, we have
\begin{align*}
&A_{\bm{u}}(J_h\bm{z},I_hc;\bm{z}_h-J_h\bm{z},c_h-I_hc)
-A_{\bm{u}_h}(J_h\bm{z},I_hc;\bm{z}_h-J_h\bm{z},c_h-I_hc)\\
&=-((\bm{u}-\bm{u}_h) I_hc,\nabla (c_h-I_hc))+\sum_{e \in \mathcal{F}_{dl}}\int_e\avg{I_hc}\jump{c_h-I_hc}(\bm{u}- \bm{u}_h)\cdot \bm{n}~ds\\
&\;+\frac{1}{2}\sum_{e \in \mathcal{F}_{dl}}\int_e\jump{I_hc}\jump{c_h-I_hc} |(\bm{u}-\bm{u}_h)\cdot \bm{n}|~ds+
((\bm{u}-\bm{u}_h)\cdot\bm{n} I_hc,c_h-I_hc)_{\Gamma_{\text{out}}}\\
&\;-\frac{1}{2}((\bm{u}-\bm{u}_h)\cdot\bm{n}I_hc,c_h-I_hc)_{\Gamma_{\text{out}}}+
\frac{1}{2}((\bm{u}-\bm{u}_h)\cdot\bm{n}I_hc,c_h-I_hc)_{\Gamma_{\text{in}}}.
\end{align*}
The first term on the right-hand side can be written as follows by using integration by parts
\begin{align*}
&-((\bm{u}-\bm{u}_h) I_hc,\nabla (c_h-I_hc))\\
&=(\nabla\cdot((\bm{u}-\bm{u}_h) I_hc), c_h-I_hc)-\sum_{e\in \mathcal{F}_{dl}}((\bm{u}-\bm{u}_h)\cdot\bm{n}\jump{I_hc}, \avg{c_h-I_hc})_e\\
&\;-\sum_{e\in \mathcal{F}_{dl}}((\bm{u}-\bm{u}_h)\cdot\bm{n}\avg{I_hc}, \jump{c_h-I_hc})_e-((\bm{u}-\bm{u}_h)\cdot\bm{n}I_hc, c_h-I_hc)_{\partial \Omega}.
\end{align*}
Thus, we have
\begin{align*}
&A_{\bm{u}}(J_h\bm{z},I_hc;\bm{z}_h-J_h\bm{z},c_h-I_hc)
-A_{\bm{u}_h}(J_h\bm{z},I_hc;\bm{z}_h-J_h\bm{z},c_h-I_hc)\\
&=(\nabla\cdot((\bm{u}-\bm{u}_h) I_hc), c_h-I_hc)-\frac{1}{2}((\bm{u}-\bm{u}_h)\cdot\bm{n}I_hc, c_h-I_hc)_{\partial \Omega}\\
&\;+\frac{1}{2}\sum_{e \in \mathcal{F}_{dl}}\int_e\jump{I_hc}\jump{c_h-I_hc} |(\bm{u}-\bm{u}_h)\cdot \bm{n}|~ds\\
&\;-\sum_{e\in \mathcal{F}_{dl}}((\bm{u}-\bm{u}_h)\cdot\bm{n}\jump{I_hc}, \avg{c_h-I_hc})_e:=\sum_{i=1}^4 R_i.
\end{align*}
$R_1$ can be estimated by using integration by parts and the Cauchy-Schwarz inequality
\begin{align*}
R_1&=(\nabla (I_hc) (\bm{u}-\bm{u}_h),c_h-I_hc)+(I_hc \nabla \cdot (\bm{u}-\bm{u}_h),c_h-I_hc)\\
&\leq \|\nabla (I_hc)\|_{L^\infty(\Omega)}\|\bm{u}-\bm{u}_h\|_0\|\phi^{1/2}(c_h-I_hc)\|_0,
\end{align*}
where we use the fact that $\nabla\cdot(\bm{u}-\bm{u}_h)=0$.

The Cauchy-Schwarz inequality, \eqref{eq:Gammaz} and \eqref{eq:Ihz} imply
\begin{align*}
|R_2|=\frac{1}{2}|((\bm{u}-\bm{u}_h)\cdot\bm{n}I_hc, c_h-I_hc)_{\partial \Omega}|&\leq \frac{1}{2}\|I_hc\|_{L^\infty(\partial \Omega)}\|(\bm{u}-\bm{u}_h)\cdot\bm{n}\|_{0,\partial \Omega}
\|c_h-I_hc\|_{0,\partial \Omega}\\
&\leq C\|I_hc\|_{L^\infty(\partial \Omega)}\|(\bm{u}-\bm{u}_h)\cdot\bm{n}\|_{0,\partial \Omega}
\|c_h-I_hc\|_{1,h,*}\\
&\leq C \|I_hc\|_{L^\infty(\partial \Omega)}\|(\bm{u}-\bm{u}_h)\cdot\bm{n}\|_{0,\partial \Omega}
\|K^{-1}(\bm{z}-\bm{z}_h)\|_0.
\end{align*}
We can deduce from the Cauchy-Schwarz inequality that
\begin{align*}
&R_3+R_4=\frac{1}{2}\sum_{e \in \mathcal{F}_{dl}}\int_e\jump{I_hc}\jump{c_h-I_hc} |(\bm{u}-\bm{u}_h)\cdot \bm{n}|~ds-\sum_{e\in \mathcal{F}_{dl}}((\bm{u}-\bm{u}_h)\cdot\bm{n}\jump{I_hc}, \avg{c_h-I_hc})_e\\
&\leq C\sum_{e \in \mathcal{F}_{dl}}\|c-I_hc\|_{L^\infty(e)}\|(\bm{u}-\bm{u}_h)\cdot\bm{n}\|_{0,e}\|c_h-I_hc\|_{0,e}.
\end{align*}
%
Thus, the inverse inequality leads to
\begin{align*}
R_3+R_4\leq C\sum_{e\in \mathcal{F}_{dl}}\|c\|_{W^{1,\infty}(e)} h^{1/2} \|(\bm{u}-\bm{u}_h)\cdot\bm{n}\|_{0,e}\|\phi^{1/2}(c_h-I_hc)\|_0.
\end{align*}
Then, integrating over $t$ for both sides of \eqref{eq:error1} and using \eqref{eq:A} imply that
\begin{align*}
\frac{1}{2}\|\phi^{\frac{1}{2}}(I_hc-c_h)(T)\|_0^2+\int_0^T\|K^{-\frac{1}{2}} (J_h\bm{z}-\bm{z}_h)\|_0^2&\leq \frac{1}{2}\|\phi^{\frac{1}{2}}(I_hc-c_h)(0)\|_0^2+\int_0^T (\sum_{i=1}^5 L_i+\sum_{i=1}^4 R_i).
\end{align*}
%
%
%
%
%
%
%
%
An appeal to Lemma~\ref{lemma:inequalityTime} implies that
\begin{align*}
\|(I_hc-c_h,J_h\bm{z}-\bm{z}_h)\|_c&\leq C\Big(\|\phi^{\frac{1}{2}}(I_hc-c_h)(0)\|_0^2+\int_0^T\Big((\sum_{e\in \mathcal{F}_h}h_e\|(\bm{u}-\bm{u}_h)\cdot\bm{n}\|_{0,e}^2)^{\frac{1}{2}}\\
&\;+\|\bm{u}-\bm{u}_h\|_0
+\|K^{-\frac{1}{2}}(\bm{z}-J_h\bm{z})\|_0+\|K^{-\frac{1}{2}}(\bm{z}-\bm{z}_h)\|_0
+\|\frac{\partial (c-I_hc)}{\partial t}\|_0\\
&\;+\|c-I_hc\|_0+(\sum_{e\in \mathcal{F}_h}h_e\|c-I_hc\|_{0,e}^2)^{\frac{1}{2}}+\|(\bm{u}-\bm{u}_h)\cdot\bm{n}\|_{0,\partial \Omega}\Big)\;dt\Big).
\end{align*}
Then an appeal to the triangle inequality, \eqref{eq:BDM}, \eqref{eq:Perror}, \eqref{eq:uerror}, Theorem~\ref{thm:L2} and \eqref{eq:interpGin} completes the proof.

\end{proof}

Now we briefly introduce the fully discrete scheme for \eqref{eq:discrete1}-\eqref{eq:discrete2} based on the backward Euler scheme. We introduce a partition of the time interval $[0,T]$ into
 subintervals $[t_{n},t_{n+1}],0\leq n\leq N (N\;\mbox{is an integer})$ and denote the time step size by $\Delta t=\frac{T}{N}$. Using the backward Euler scheme in time, we get the fully discrete scheme as follows: Find $(\bm{z}_h^{n+1},c_h^{n+1})\in W_h\times U_h$ such that
\begin{align}
&(K^{-1} \bm{z}_h^{n+1}, \bm{\psi}) - T_h^*(c_h^{n+1},\bm{\psi})=0,\label{eq:discrete1-fully}\\
&(\phi \frac{c_h^{n+1}-c_h^n}{\Delta t},q)+(c_h^{n+1}f^-,q)+T_h(\bm{z}_h^{n+1},q)-(\bm{u}_hc_h^{n+1},\nabla q)+S_h(c_h^{n+1},q)+(\bm{u}_hc_h^{n+1}\cdot\bm{n},q)_{\Gamma_{\text{out}}}\nonumber\\
&\;+\frac{1}{2}((\bm{u}-\bm{u}_h)\cdot\bm{n}c_h^{n+1},q)_{\Gamma_{\text{out}}}
-\frac{1}{2}((\bm{u}-\bm{u}_h)\cdot\bm{n}c_h^{n+1},q)_{\Gamma_{\text{in}}} = (\phi s^{n+1}, q)-
(c_{\text{in}}\bm{u}\cdot\bm{n},q)_{\Gamma_{\text{in}}}+(\hat{c}f^+,q)\label{eq:discrete2-fully}
\end{align}
for any $(\bm{\psi},q)\in W_h\times U_h$.

We can show the following stability result for the fully discrete scheme.
\begin{theorem}
Let $(\bm{z}_h^{n},c_h^n)\in W_h\times U_h$ be the discrete solution of \eqref{eq:discrete1-fully}-\eqref{eq:discrete2-fully}. Then the following stability result holds
\begin{align*}
2\Delta t\sum_{n=0}^N\|K^{-\frac{1}{2}} \bm{z}_h^{n+1}\|_0^2+\|\phi^{\frac{1}{2}}c_h^{N+1}\|_0^2
&\leq C\Big(\sum_{n=0}^N\Delta t\|\phi^{\frac{1}{2}} s^{n+1}\|_0^2+\sum_{n=0}^N\Delta t (\bm{u}\cdot\bm{n},c_{\textnormal{in}}^2)_{\Gamma_{\textnormal{in}}}\\
&\;+\sum_{n=0}^N\Delta t(\hat{c}^2,f^+)_{\Omega_D}+\phi^*\|c^{0}\|_0^2\Big).
\end{align*}

\end{theorem}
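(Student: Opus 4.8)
The plan is to mirror the proof of the semi-discrete stability estimate in Theorem~\ref{thm:stability}, replacing the continuous time-derivative identity by its backward Euler counterpart and the continuous-in-time argument of Lemma~\ref{lemma:inequalityTime} by a discrete Gronwall inequality. First I would test \eqref{eq:discrete1-fully} with $\bm{\psi}=\bm{z}_h^{n+1}$ and \eqref{eq:discrete2-fully} with $q=c_h^{n+1}$ and add the resulting identities. The discrete adjoint property \eqref{eq:adjointTh} cancels the paired terms $T_h^*(c_h^{n+1},\bm{z}_h^{n+1})$ and $T_h(\bm{z}_h^{n+1},c_h^{n+1})$, leaving $\|K^{-1/2}\bm{z}_h^{n+1}\|_0^2$ as the only surviving contribution from the flux variable. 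The convective term $(\bm{u}_h c_h^{n+1},\nabla c_h^{n+1})$ is then integrated by parts elementwise exactly as in \eqref{eq:Auh}; since $\bm{u}_h\in H(\tdiv;\Omega)$ the interior contributions reduce to $\sum_{e\in\mathcal{F}_{dl}}(\bm{u}_h\cdot\bm{n},\jump{c_h^{n+1}}\avg{c_h^{n+1}})_e$, which is absorbed by the upwinding stabilization in $S_h$, while the outflow term $(\bm{u}_h\cdot\bm{n}c_h^{n+1},c_h^{n+1})_{\Gamma_{\text{out}}}$, the two boundary correction terms, and the boundary contribution of the integration by parts combine to produce $\tfrac12(|\bm{u}\cdot\bm{n}|c_h^{n+1},c_h^{n+1})_{\partial\Omega}$, expressed through the true velocity.

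The essential new ingredient is the treatment of the discrete time derivative. I would use the elementary identity
\begin{align*}
(\phi\,(c_h^{n+1}-c_h^n),c_h^{n+1})=\tfrac12\big(\|\phi^{1/2}c_h^{n+1}\|_0^2-\|\phi^{1/2}c_h^n\|_0^2+\|\phi^{1/2}(c_h^{n+1}-c_h^n)\|_0^2\big),
\end{align*}
so that, after multiplying the summed identity by $\Delta t$ and summing over $n=0,\dots,N$, the first two terms telescope to $\tfrac12(\|\phi^{1/2}c_h^{N+1}\|_0^2-\|\phi^{1/2}c_h^0\|_0^2)$ and the third is nonnegative and can be discarded. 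Invoking the strong mass conservation $\nabla\cdot\bm{u}_h|_{\Omega_B}=0$, $\nabla\cdot\bm{u}_h|_{\Omega_D}=f$ established in the mass conservation lemma, the combination $\tfrac12((c_h^{n+1})^2,\nabla\cdot\bm{u}_h)+(c_h^{n+1}f^-,c_h^{n+1})$ collapses to $\pm\tfrac12((c_h^{n+1})^2,f)_{\Omega_D}$, and the injection term obeys $(\hat{c}f^+,c_h^{n+1})\le\tfrac12(\hat{c}^2,f^+)_{\Omega_D}+\tfrac12((c_h^{n+1})^2,f^+)_{\Omega_D}$, so every reaction contribution of definite sign either cancels or is bounded by $(\hat{c}^2,f^+)_{\Omega_D}$, exactly as in \eqref{eq:ChT}.

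Finally I would bound the remaining source terms. For the inflow term I use $-(c_{\text{in}}\bm{u}\cdot\bm{n},c_h^{n+1})_{\Gamma_{\text{in}}}\le\tfrac12(|\bm{u}\cdot\bm{n}|,c_{\text{in}}^2)_{\Gamma_{\text{in}}}+\tfrac12(|\bm{u}\cdot\bm{n}|,(c_h^{n+1})^2)_{\Gamma_{\text{in}}}$, whose second piece is absorbed by the boundary term on $\partial\Omega$. For the body source I apply Young's inequality, $(\phi s^{n+1},c_h^{n+1})\le\tfrac12\|\phi^{1/2}s^{n+1}\|_0^2+\tfrac12\|\phi^{1/2}c_h^{n+1}\|_0^2$. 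This is precisely where the fully discrete argument departs from the sharp semi-discrete one: the term $\tfrac{\Delta t}{2}\sum_{n}\|\phi^{1/2}c_h^{n+1}\|_0^2$ cannot be cancelled directly, so I would absorb it by a discrete Gronwall inequality, valid for $\Delta t$ sufficiently small, which yields the asserted estimate with a generic constant $C$. Together with $\|\phi^{1/2}c_h^0\|_0\le(\phi^*)^{1/2}\|c^0\|_0$ from the $L^2$-projection bound on the initial data, this completes the proof. The main obstacle is organizational rather than conceptual: one must verify that every delicate cancellation in Theorem~\ref{thm:stability}—in particular the sign control of the divergence–reaction combination afforded by the strong mass conservation—survives verbatim under the backward Euler discretization, and that the only genuinely new term, the one requiring the discrete Gronwall lemma, is exactly what accounts for the appearance of the constant $C$ in contrast to the sharp semi-discrete bound.
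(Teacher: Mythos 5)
Your proposal is correct and follows essentially the same route as the paper's own proof: energy testing with $(\bm{z}_h^{n+1},c_h^{n+1})$, the adjoint cancellation via \eqref{eq:adjointTh}, elementwise integration by parts of the convective term combined with the boundary correction terms to yield $\tfrac12(|\bm{u}\cdot\bm{n}|,(c_h^{n+1})^2)_{\partial\Omega}$, the identity $(a-b)a=\tfrac12(a^2-b^2+(a-b)^2)$ for the backward Euler term, the sign control of the divergence--reaction combination via strong mass conservation, Young's inequality for the injection, inflow and source terms, and finally summation plus the discrete Gronwall inequality. Your explicit caveat that the Gronwall absorption of $\Delta t\sum_n\|\phi^{1/2}c_h^{n+1}\|_0^2$ requires $\Delta t$ sufficiently small is in fact more careful than the paper, whose subsequent remark labels the estimate ``unconditional'' while tacitly using the same implicit-Gronwall step.
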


\begin{proof}
First, we have
\begin{align*}
A_{\bm{u}_h}(\bm{z}_h^{n+1},c_h^{n+1};\bm{z}_h^{n+1},c_h^{n+1})
&=\|K^{-1/2} \bm{z}_h^{n+1}\|_0^2+(\phi \frac{c_h^{n+1}-c_h^{n}}{\Delta t},c_h^{n+1})+\frac{1}{2}((c_h^{n+1})^2,\nabla\cdot\bm{u}_h)\\
&\;+\frac{1}{2}\sum_{e\in \mathcal{F}_{dl}}(\jump{c_h^{n+1}}^2,|\bm{u}_h\cdot\bm{n}|)_e+(c_h^{n+1}f^-,c_h^{n+1})
+(\bm{u}_h\cdot\bm{n}c_h^{n+1},c_h^{n+1})_{\Gamma_{\text{out}}}\\
&-\frac{1}{2}(\bm{u}_h\cdot\bm{n},(c_h^{n+1})^2)_{\partial \Omega}+
\frac{1}{2}((\bm{u}-\bm{u}_h)\cdot\bm{n}c_h^{n+1},c_h^{n+1})_{\Gamma_{\text{out}}}\\
&\;-\frac{1}{2}((\bm{u}-\bm{u}_h)\cdot\bm{n}c_h^{n+1},c_h^{n+1})_{\Gamma_{\text{in}}}.
\end{align*}
where we exploit integration by parts for $(\bm{u}_hc_h^{n+1},\nabla c_h^{n+1})$.

Noting that $(a-b)a=\frac{a^2-b^2+(a-b)^2}{2}$, we can deduce that
\begin{align*}
&\|K^{-1/2} \bm{z}_h^{n+1}\|_0^2+\frac{1}{2\Delta t}(\|\phi^{\frac{1}{2}}c_h^{n+1}\|_0^2-\|\phi^{\frac{1}{2}}c_h^{n}\|_0^2+\|\phi^{\frac{1}{2}}(c_h^{n+1}-c_h^n)\|_0^2)
+\frac{1}{2}((c_h^{n+1})^2,\nabla\cdot\bm{u}_h)\\
&\;+\frac{1}{2}\sum_{e\in \mathcal{F}_{dl}}(\jump{c_h^{n+1}}^2,|\bm{u}_h\cdot\bm{n}|)_e+(c_h^{n+1}f^-,c_h^{n+1})
+\frac{1}{2}(|\bm{u}\cdot\bm{n}|,(c_h^{n+1})^2)_{\partial \Omega}\\
&=(\phi s^{n+1}, c_h^{n+1})-
(c_{\text{in}}\bm{u}\cdot\bm{n},c_h^{n+1})_{\Gamma_{\text{in}}}+(\hat{c}f^+,c_h^{n+1}).
\end{align*}
Then proceeding similarly to \eqref{eq:ch}, we can infer that
\begin{align*}
\|K^{-\frac{1}{2}} \bm{z}_h^{n+1}\|_0^2+\frac{1}{2\Delta t}(\|\phi^{\frac{1}{2}}c_h^{n+1}\|_0^2-\|\phi^{\frac{1}{2}}c_h^{n}\|_0^2)&\leq \frac{1}{2}\|\phi^{\frac{1}{2}} s^{n+1}\|_0^2+\frac{1}{2}(\bm{u}\cdot\bm{n},c_{\text{in}}^2)_{\Gamma_{\text{in}}}\\
&\;+\frac{1}{2}(\hat{c}^2,f^+)_{\Omega_D}+\frac{1}{2}\|\phi^{\frac{1}{2}}c_h^{n+1}\|_0^2,
\end{align*}

%
Making a summation for $n$ from $0$ to $N$ and using the discrete Gronwall inequality imply that
\begin{align*}
2\Delta t\sum_{n=0}^N\|K^{-\frac{1}{2}} \bm{z}_h^{n+1}\|_0^2+\|\phi^{\frac{1}{2}}c_h^{N+1}\|_0^2
&\leq C\Big(\sum_{n=0}^N\Delta t\|\phi^{\frac{1}{2}} s^{n+1}\|_0^2+\sum_{n=0}^N\Delta t(\bm{u}\cdot\bm{n},c_{\text{in}}^2)_{\Gamma_{\text{in}}}\\
&\;+\sum_{n=0}^N\Delta t(\hat{c}^2,f^+)_{\Omega_D}+\phi^*\|c^{0}\|_0^2\Big).
\end{align*}
Therefore, the proof is completed.

\end{proof}

\begin{remark}
In the above theorem, we have showed the unconditional stability for the fully discrete scheme.
Proceeding similarly to Theorem~\ref{thm:convergence-transport}, and using the discrete Gronwall inequality and Taylor's expansion for the time discretization, we are able to prove the optimal convergence rates for the fully discrete scheme, which are omitted here for simplicity.

\end{remark}

\section{Numerical experiments}\label{sec:numerical}

In this section we present several numerical experiments to verify the proposed theories. We exploit the backward Euler scheme for the time discretization. In the first two tests given below, we take the final simulation time $T=0.1$ and the time step size is $\Delta t=10^{-3}$. In addition, we set $\phi=1$ for all the tests and the interface conditions \eqref{eq:interface1}-\eqref{eq:interface2} are satisfied exactly for all the tests. The polynomial order for all the tests are chosen as $k=1$. In the first three tests, the grids used for the Brinkman discretization in $\Omega_B$ are obtained by first partitioning the domain into rectangles and then dividing each rectangle into the union of triangles by connecting the center point to all the vertices. We also use the same grid for the Darcy region $\Omega_D$. The transport grid in $\Omega$ is the grid used for the flow discretization.

\subsection{Example 1}\label{ex1}
In the first example we set $\Omega_B=(0,1/2)\times (0,1)$ and $\Omega_D=(1/2,1)\times (0,1)$. The velocity field is continuous across the interface and the exact solution is defined by
\begin{align*}
 \bm{u}_D=\begin{cases}
(y(y - 1)(12x^2 - 8x + 1))/4\\
(x(2 x - 1)^2(2y - 1))/4
\end{cases},\quad p_D=x(1/2-x)^2y(1-y)
\end{align*}
and
\begin{align*}
\bm{u}_B=\begin{cases}
x^2 (1/2-x)^2y^2(1-y)^2\\
x^2(1/2-x)^2y^2(1-y)^2
\end{cases}, \quad p_B= x(1/2-x)^2(y-1/2).
\end{align*}
The true solution for the transport equation is defined by $c=t(\cos(\pi x)+\cos(\pi y))/\pi$. The inflow concentration $c_{\text{in}}$ is achieved by evaluating the true concentration at $\Gamma_{\text{in}}$.
We let $K_D=\alpha=1$.
The convergence history for $L^2$ errors of all the variables against the meshsize is displayed in Table~\ref{ex1:table1}-Table~\ref{ex1:table4}, where various values of $K$ and $\epsilon$ are exploited. We can observe that optimal convergence rates can be achieved for different values of $\epsilon$.
As it is well known, the Brinkman equations can describe the Stokes equations and Darcy equations depending on the values of $\epsilon$;
 our numerical results indicate that the proposed scheme can behave uniformly robust for both the Stokes and Darcy limits. In addition, if we take $K$ to be a small number, the convergence rate for $L^2$-error of $\bm{z}$ will degenerate to first order. This can be explained as follows: we use the superconvergence for $\|I_hc-c_h\|_{1,h,*}$ in \eqref{eq:Ihz} that depends on $K_{\text{min}}^{-1/2}$, the control of diffusive flux will be lost due to the dependence of the regularity constant on $K$.

\begin{table}[t]
\begin{center}
{\footnotesize
\begin{tabular}{cc||c c|c c|c c}
\hline
 &Mesh & \multicolumn{2}{|c|}{$\|\epsilon^{-1/2}(\bm{L}-\bm{L}_{h})\|_{0,\Omega_B}$} & \multicolumn{2}{|c|}{$\|\bm{u}_B-\bm{u}_{B,h}\|_{0,\Omega_B}$} & \multicolumn{2}{|c}{$\|p_B-p_{B,h}\|_{0,\Omega_B}$}\\
\hline
$k$ & $h^{-1}$  & Error & Order & Error& Order& Error & Order \\
\hline
1& 2  & 6.238e-04 &   N/A  &7.21e-05 &  N/A &3.97e-04 &   N/A \\
&  4  & 1.364e-04 &  2.19  &1.27e-05 & 2.50 &9.31e-05 & 2.09  \\
&  8  & 3.33e-05 &  2.03  &2.90e-06 & 2.12 &2.32e-05& 2.00 \\
&  16 & 8.30e-06 &  2.00   &7.10e-07 & 2.03 &5.80e-06 &1.99 \\
&  32 & 2.10e-06 & 2.00  &1.75e-07 & 2.01 &1.45e-06 & 1.99  \\
\hline
\end{tabular}}
\caption{Convergence history for Example~\ref{ex1} with $K=1,\epsilon=1$.}
\label{ex1:table1}
\end{center}
\end{table}

\begin{table}[t]
\begin{center}
{\footnotesize
\begin{tabular}{cc||c c|c c|c c|c c}
\hline
 &Mesh & \multicolumn{2}{|c|}{$\|\bm{u}_D-\bm{u}_{D,h}\|_{0,\Omega_D}$} & \multicolumn{2}{|c|}{$\|p_D-p_{D,h}\|_{0,\Omega_D}$} & \multicolumn{2}{|c}{$\|c-c_h\|_{0}$} & \multicolumn{2}{|c}{$\|\bm{z}-\bm{z}_h\|_{0}$}\\
\hline
$k$ & $h^{-1}$  & Error & Order & Error& Order& Error & Order & Error & order \\
\hline
1& 2  & 1.10e-02 &   N/A  &4.60e-03 &  N/A &9.77e-04 &   N/A&3.70e-03 & NA \\
&  4  & 2.90e-03 &  1.94  &2.40e-03 & 0.95 &2.48e-04 & 1.98 &9.36e-04 & 1.98 \\
&  8  & 7.24e-04 &  1.98  &1.20e-03 & 0.99 &6.23e-05 & 1.99 &2.35e-04 &1.99\\
&  16 & 1.81e-04 &  1.99  &6.03e-04 & 0.99 &1.56e-05 &1.99 & 5.87e-05&1.99\\
&  32 & 4.54e-05 & 1.99 &3.02e-04 & 0.99 &3.90e-06 & 1.99 & 1.47e-05& 1.99\\
\hline
\end{tabular}}
\caption{Convergence history for Example~\ref{ex1} with $K=1,\epsilon=1$.}
\label{ex1:table2}
\end{center}
\end{table}

\begin{table}[t]
\begin{center}
{\footnotesize
\begin{tabular}{cc||c c|c c|c c}
\hline
 &Mesh & \multicolumn{2}{|c|}{$\|\epsilon^{-1/2}(\bm{L}-\bm{L}_{h})\|_{0,\Omega_B}$} & \multicolumn{2}{|c|}{$\|\bm{u}_B-\bm{u}_{B,h}\|_{0,\Omega_B}$} & \multicolumn{2}{|c}{$\|p_B-p_{B,h}\|_{0,\Omega_B}$}\\
\hline
$k$ & $h^{-1}$  & Error & Order & Error& Order& Error & Order \\
\hline
1& 2  & 7.62e-07 &   N/A  &9.66e-04 &  N/A &5.48e-04 &   N/A \\
&  4  & 2.99e-07 &  1.34  &2.31e-04 & 2.06 &1.75e-04 & 1.65  \\
&  8  & 1.06e-07 &  1.50  &5.59e-05 & 2.05 &4.63e-05& 1.92 \\
&  16 & 3.62e-08 &  1.55   &1.38e-05 & 2.02 &1.17e-05 &1.98 \\
&  32 & 1.24e-08 & 1.55  &3.40e-06 & 2.00 &2.90e-06 & 1.99  \\
\hline
\end{tabular}}
\caption{Convergence history for Example~\ref{ex1} with $K=0.001,\epsilon=10^{-8}$.}
\label{ex1:table3}
\end{center}
\end{table}

\begin{table}[t]
\begin{center}
{\footnotesize
\begin{tabular}{cc||c c|c c|c c|c c}
\hline
 &Mesh & \multicolumn{2}{|c|}{$\|\bm{u}_D-\bm{u}_{D,h}\|_{0,\Omega_D}$} & \multicolumn{2}{|c|}{$\|p_D-p_{D,h}\|_{0,\Omega_D}$} & \multicolumn{2}{|c}{$\|c-c_h\|_{0}$} & \multicolumn{2}{|c}{$\|\bm{z}-\bm{z}_h\|_{0}$}\\
\hline
$k$ & $h^{-1}$  & Error & Order & Error& Order& Error & Order & Error & order \\
\hline
1& 2  & 9.00e-03 &   N/A  &4.60e-03 &  N/A &9.17e-04 &   N/A&7.74e-06 & NA \\
&  4  & 2.40e-03 &  1.93  &2.40e-03 & 0.95 &2.34e-04 & 1.97 &3.75e-06 & 1.04 \\
&  8  & 6.04e-04 &  1.97  &1.20e-03 & 0.99 &5.97e-05 & 1.97 &2.85e-06 &1.02\\
&  16 & 1.52e-04 &  1.99  &6.03e-04 & 0.99 &1.52e-05 &1.97 & 7.94e-07&1.21\\
&  32 & 3.83e-05 & 1.99 &3.02e-04 & 0.99 &3.90e-06 & 1.98 & 2.88e-07& 1.46\\
\hline
\end{tabular}}
\caption{Convergence history for Example~\ref{ex1} with $K=0.001,\epsilon=10^{-8}$.}
\label{ex1:table4}
\end{center}
\end{table}

\subsection{Example 2}\label{ex2}

In the second example we set $\Omega_B=(0,1/2)\times (0,1)$ and $\Omega_D=(1/2,1)\times (0,1)$ and
the velocity field is again continuous across the interface. The exact solution is defined by
\begin{align*}
 \bm{u}_D=\begin{cases}
\sin(2\pi x)\cos(2\pi y)\\
\sin(2\pi x)\cos(2\pi y)
\end{cases},\quad p_D=x(1/2-x)^2y(1-y)
\end{align*}
and
\begin{align*}
\bm{u}_B=\begin{cases}
x^2\sin(2\pi x)^2y^2\sin(\pi y)^2\\
x^2\sin(2\pi x)^2 y^2\sin(\pi y)^2
\end{cases}, \quad p_B= x(1/2-x)^2(y-1/2).
\end{align*}
The exact solution for transport equation is defined to be the same as Example~\ref{ex1}. We let $K_D=\alpha=1$. The convergence history for $L^2$ errors of all the variable against the meshsize is displayed in Table~\ref{ex2:table1}-Table~\ref{ex2:table4}. Various values of $K$ and $\epsilon$ are employed to test the robustness of the scheme. Similarly, we can observe that optimal convergence rates can be achieved for different values of $\epsilon$ and the scheme is uniformly robust for both the Stokes limit and Darcy limit. In addition, we can achieve second order convergence for diffusive flux when $K=1$ and the convergence rate will degenerate to first order when $K$ is small. This example once again highlights that the proposed scheme is uniformly robust for various values of viscosity.

\begin{table}[t]
\begin{center}
{\footnotesize
\begin{tabular}{cc||c c|c c|c c}
\hline
 &Mesh & \multicolumn{2}{|c|}{$\|\epsilon^{-1/2}(\bm{L}-\bm{L}_{h})\|_{0,\Omega_B}$} & \multicolumn{2}{|c|}{$\|\bm{u}_B-\bm{u}_{B,h}\|_{0,\Omega_B}$} & \multicolumn{2}{|c}{$\|p_B-p_{B,h}\|_{0,\Omega_B}$}\\
\hline
$k$ & $h^{-1}$  & Error & Order & Error& Order& Error & Order \\
\hline
1& 2  & 5.87e-02 &   N/A  &6.00e-03 &  N/A &2.36e-02 &   N/A \\
&  4  & 2.00e-02 &  1.55  &1.10e-03 & 2.39 &4.90e-03 & 2.26  \\
&  8  & 4.90e-03 &  2.01  &2.54e-04 & 2.17 &1.40e-03& 1.76 \\
&  16 &1.20e-03 &  1.99   &6.18e-05 & 2.04 &3.65e-04 &1.98 \\
&  32 & 3.11e-04 & 1.99  &1.53e-05 & 2.01 &9.08e-05 & 2.00  \\
\hline
\end{tabular}}
\caption{Convergence history for Example~\ref{ex2} with $K=1,\epsilon=1$.}
\label{ex2:table1}
\end{center}
\end{table}

\begin{table}[t]
\begin{center}
{\footnotesize
\begin{tabular}{cc||c c|c c|c c|c c}
\hline
 &Mesh & \multicolumn{2}{|c|}{$\|\bm{u}_D-\bm{u}_{D,h}\|_{0,\Omega_D}$} & \multicolumn{2}{|c|}{$\|p_D-p_{D,h}\|_{0,\Omega_D}$} & \multicolumn{2}{|c}{$\|c-c_h\|_{0}$} & \multicolumn{2}{|c}{$\|\bm{z}-\bm{z}_h\|_{0}$}\\
\hline
$k$ & $h^{-1}$  & Error & Order & Error& Order& Error & Order & Error & order \\
\hline
1& 2  & 1.40e-01 &   N/A  &1.34e-02 &  N/A &1.00e-03 &   N/A&5.00e-03 & NA \\
&  4  & 4.53e-02 &  1.63  &4.50e-03 & 1.56 &2.62e-04 & 1.96 &1.40e-03 & 1.84 \\
&  8  & 1.16e-02 &  1.96  &1.60e-03 & 1.50 &6.58e-05 & 1.99 &3.64e-04 &1.96\\
&  16 & 2.90e-03 &  1.99  &6.60e-04 & 1.27 &1.65e-05 &1.99 & 9.16e-05&1.99\\
&  32 & 7.33e-04 & 1.99 &3.09e-04 & 1.09 &4.12e-06 & 1.99 & 2.30e-05& 1.99\\
\hline
\end{tabular}}
\caption{Convergence history for Example~\ref{ex2} with $K=1,\epsilon=1$.}
\label{ex2:table2}
\end{center}
\end{table}

\begin{table}[t]
\begin{center}
{\footnotesize
\begin{tabular}{cc||c c|c c|c c}
\hline
 &Mesh & \multicolumn{2}{|c|}{$\|\epsilon^{-1/2}(\bm{L}-\bm{L}_{h})\|_{0,\Omega_B}$} & \multicolumn{2}{|c|}{$\|\bm{u}_B-\bm{u}_{B,h}\|_{0,\Omega_B}$} & \multicolumn{2}{|c}{$\|p_B-p_{B,h}\|_{0,\Omega_B}$}\\
\hline
$k$ & $h^{-1}$  & Error & Order & Error& Order& Error & Order \\
\hline
1& 2  & 2.79e-05 &   N/A  &3.59e-02 &  N/A &4.90e-03 &   N/A \\
&  4  & 1.05e-05 &  1.42  &7.40e-03 & 2.28 &9.23e-04 & 2.41  \\
&  8  & 3.73e-06 &  1.49  &1.80e-03 & 2.02 &2.37e-04& 1.96 \\
&  16 &1.28e-06 &  1.55   &4.45e-04 & 2.02 &5.97e-05 &1.99 \\
&  32 & 4.45e-07 & 1.52  &1.10e-04 & 2.02 &1.49e-05 & 2.00  \\
\hline
\end{tabular}}
\caption{Convergence history for Example~\ref{ex2} with $K=0.001,\epsilon=10^{-8}$.}
\label{ex2:table3}
\end{center}
\end{table}

\begin{table}[t]
\begin{center}
{\footnotesize
\begin{tabular}{cc||c c|c c|c c|c c}
\hline
 &Mesh & \multicolumn{2}{|c|}{$\|\bm{u}_D-\bm{u}_{D,h}\|_{0,\Omega_D}$} & \multicolumn{2}{|c|}{$\|p_D-p_{D,h}\|_{0,\Omega_D}$} & \multicolumn{2}{|c}{$\|c-c_h\|_{0}$} & \multicolumn{2}{|c}{$\|\bm{z}-\bm{z}_h\|_{0}$}\\
\hline
$k$ & $h^{-1}$  & Error & Order & Error& Order& Error & Order & Error & order \\
\hline
1& 2  & 1.24e-01 &   N/A  &1.08e-02 &  N/A &2.40e-03 &   N/A&4.13e-05 & NA \\
&  4  & 4.33e-02 &  1.52  &4.00e-03 & 1.43 &8.86e-04 & 1.43 &2.99e-05 & 0.47 \\
&  8  & 1.12e-02 &  1.95  &1.50e-03 & 1.43 &3.95e-04 & 1.59 &2.16e-05 &0.47\\
&  16 & 2.90e-03 &  1.99  &6.45e-04 & 1.21 &7.95e-05 &1.89 & 1.13e-05&0.94\\
&  32 & 7.07e-04 & 1.99 &3.07e-04 & 1.07 &1.74e-05 & 2.19 & 4.28e-06& 1.39\\
\hline
\end{tabular}}
\caption{Convergence history for Example~\ref{ex2} with $K=0.001,\epsilon=10^{-8}$.}
\label{ex2:table4}
\end{center}
\end{table}

\subsection{Example 3}\label{ex5}

In this example, the exact solution is unknown. This example illustrates the capability of the proposed method in the simulation of the groundwater flow. We consider $\Omega_B=(0,1)\times(1/2,1)$ and $\Omega_D=(0,1)\times(0,1/2)$. The Brinkman region represents a lake or a river, and the Darcy region represents an aquifer. We let the parameters set as $K = 10^{-5}$, $K_D=10^{-2}$, $\epsilon=0.1$, $\alpha=1$ and $c_{\text{in}}=0$. $\bm{f}_B=\bm{f}_D=\bm{0}$ and $f=0$. The time step size is chosen to be $\Delta t=10^{-3}$. Brinkman velocity is imposed everywhere on $\partial\Omega_B$ and it is set to be zero on the top and right boundary. On the left boundary we let $\bm{u}_B=(\frac{y(3/2-y)}{5},0)$. Zero normal velocity ($\bm{u}_D\cdot\bm{n}=0$) is imposed on the left and right boundary of $\Omega_D$ and Darcy pressure is imposed on the bottom, i.e., $p(x,0)=-0.05$.
In addition, the initial condition for $c$ is defined by
\begin{align*}
c^0=
\begin{cases}
1\quad \mbox{if}\; \sqrt{(x-0.1)^2+(y-0.7)^2}<0.1,\\
0\quad \mbox{otherwise}.
\end{cases}
\end{align*}
Although imposing Dirichlet boundary condition for Brinkman velocities is not covered in our analysis, we can modify the proposed scheme to adapt to this case. It should be noted that $\bm{L}\bm{n}=0$ is imposed on $\Gamma$ in order to ensure the unique solvability of the solution.

The profiles of the computed concentration at different time instants $t=1,3,6$ are displayed in Figure~\ref{ex51:ch}-Figure~\ref{ex53:ch}. We can observe that the concentration propagates from the surface water region to the groundwater region.
\begin{figure}[H]
\centering
\includegraphics[width=0.45\textwidth]{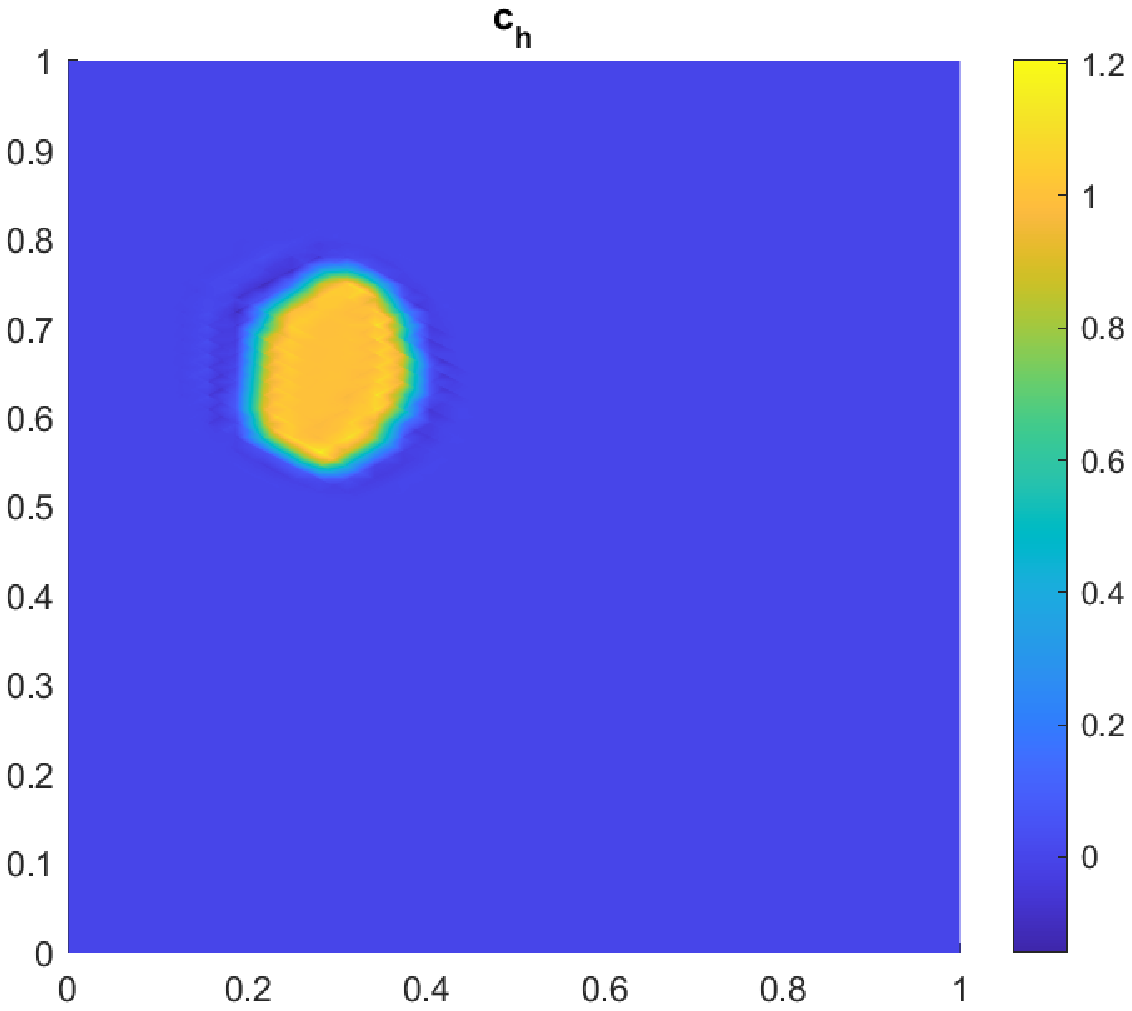}
\includegraphics[width=0.45\textwidth]{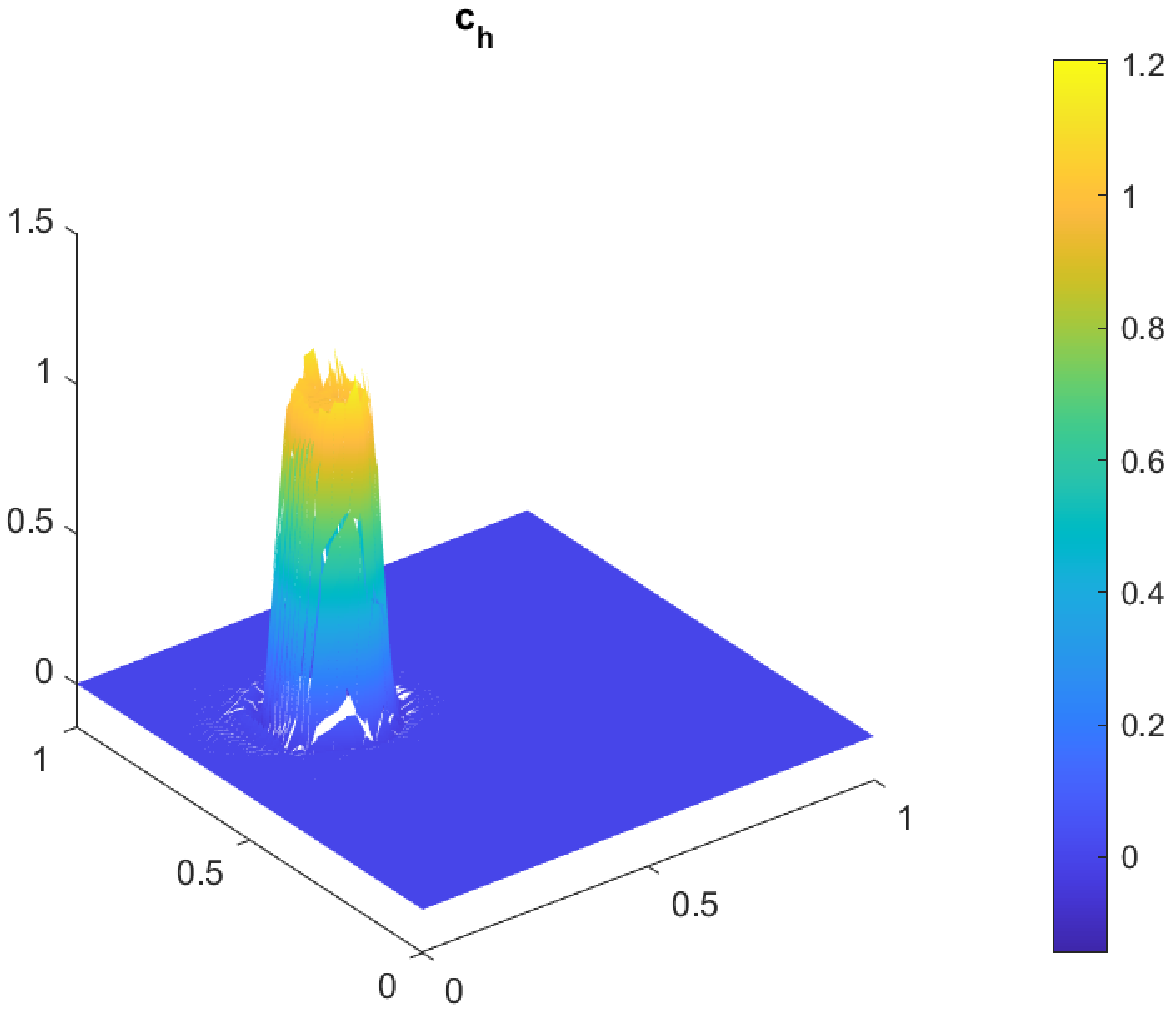}
\caption{Concentration at $t=1$ for Example~\ref{ex5}.}
\label{ex51:ch}
\end{figure}

\begin{figure}[H]
\centering
\includegraphics[width=0.45\textwidth]{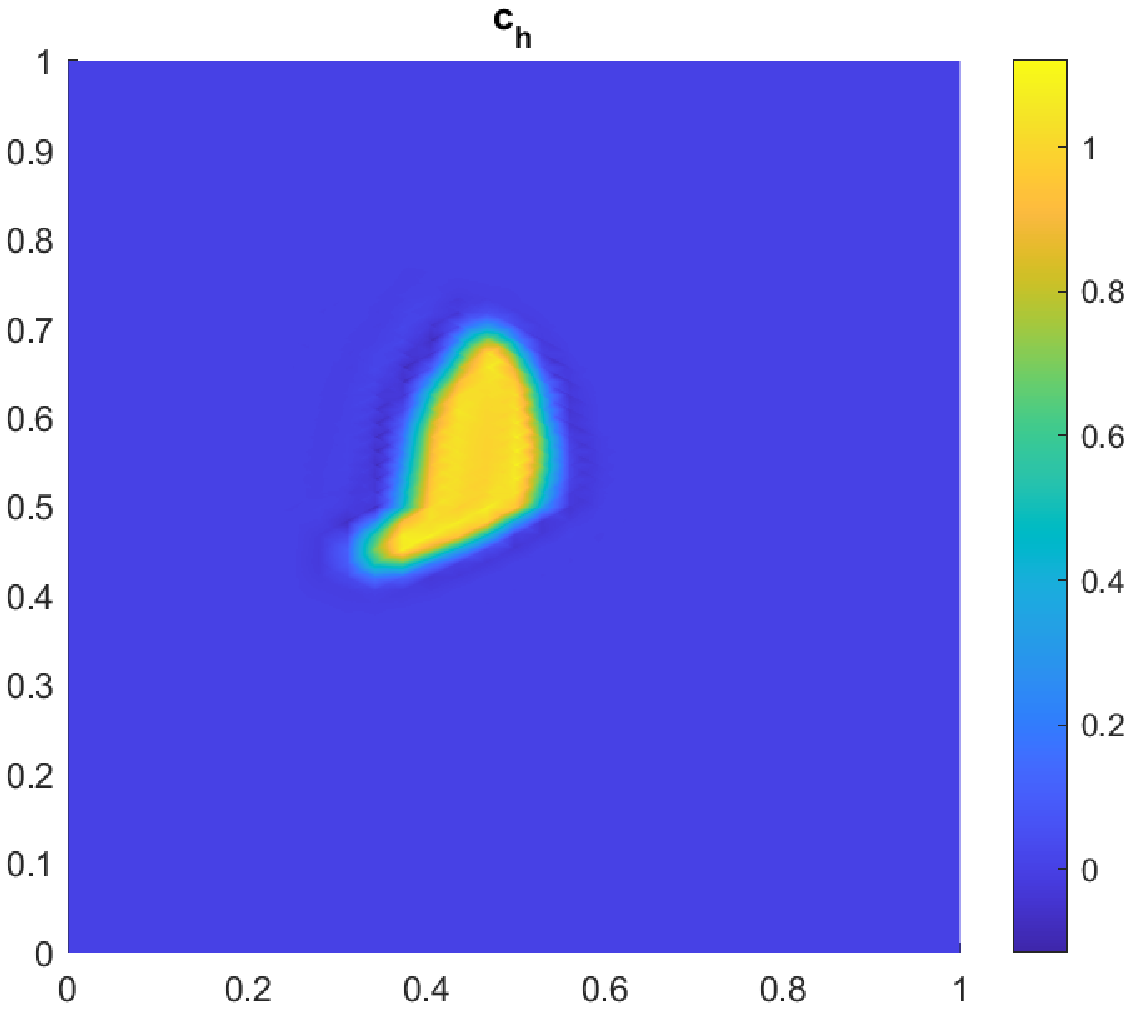}
\includegraphics[width=0.45\textwidth]{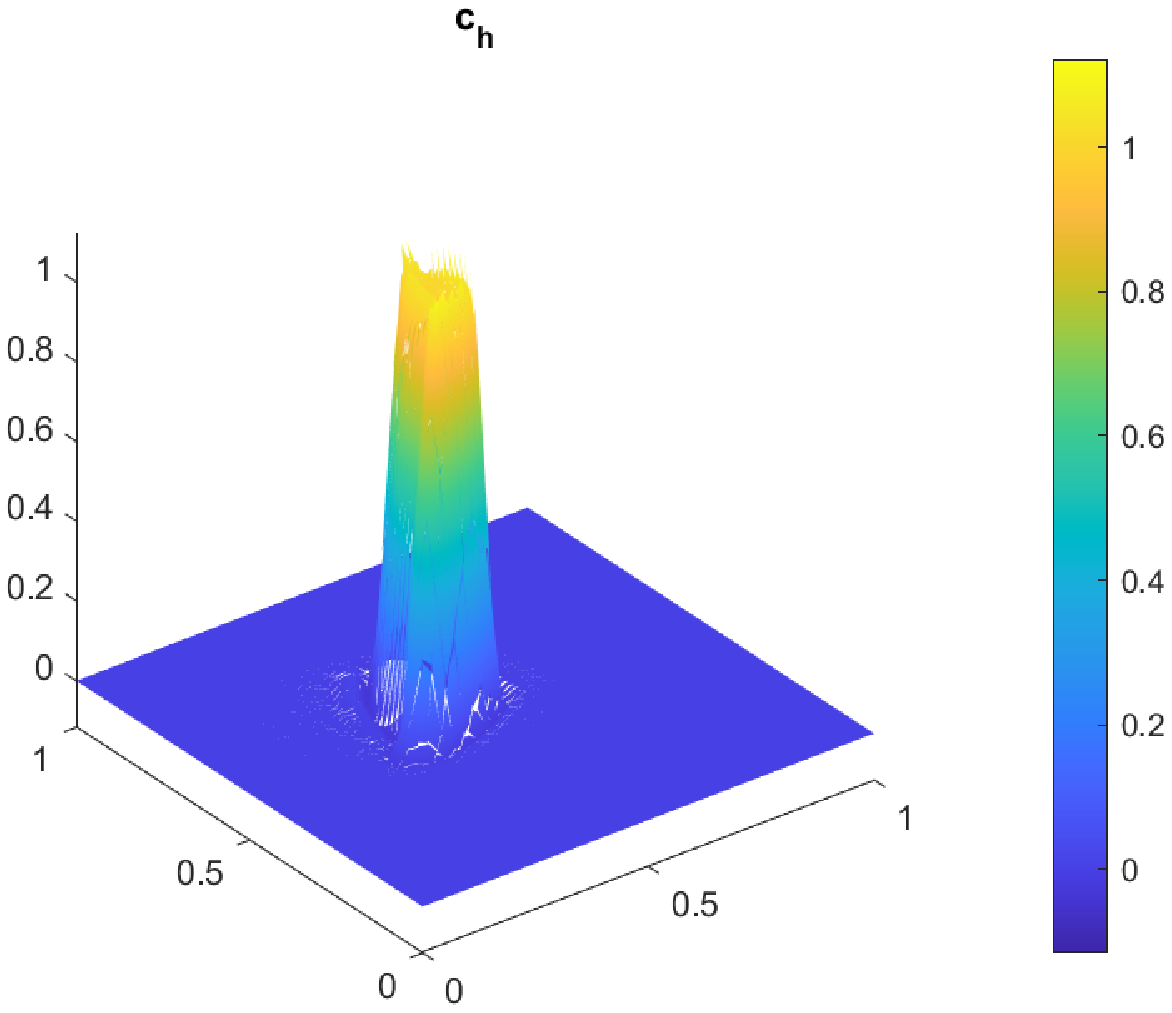}
\caption{Concentration at $t=3$ for Example~\ref{ex5}.}
\label{ex52:ch}
\end{figure}

\begin{figure}[H]
\centering
\includegraphics[width=0.45\textwidth]{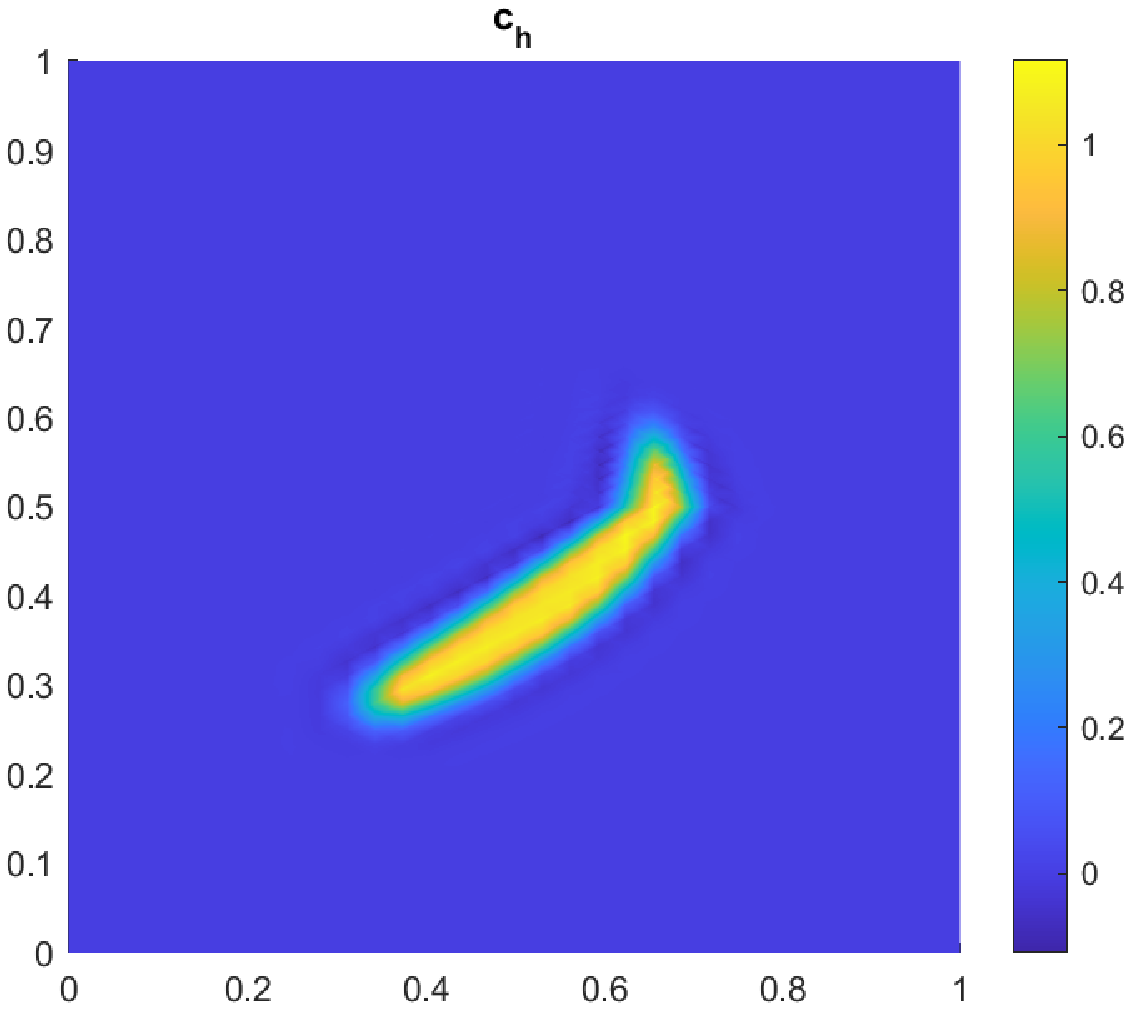}
\includegraphics[width=0.45\textwidth]{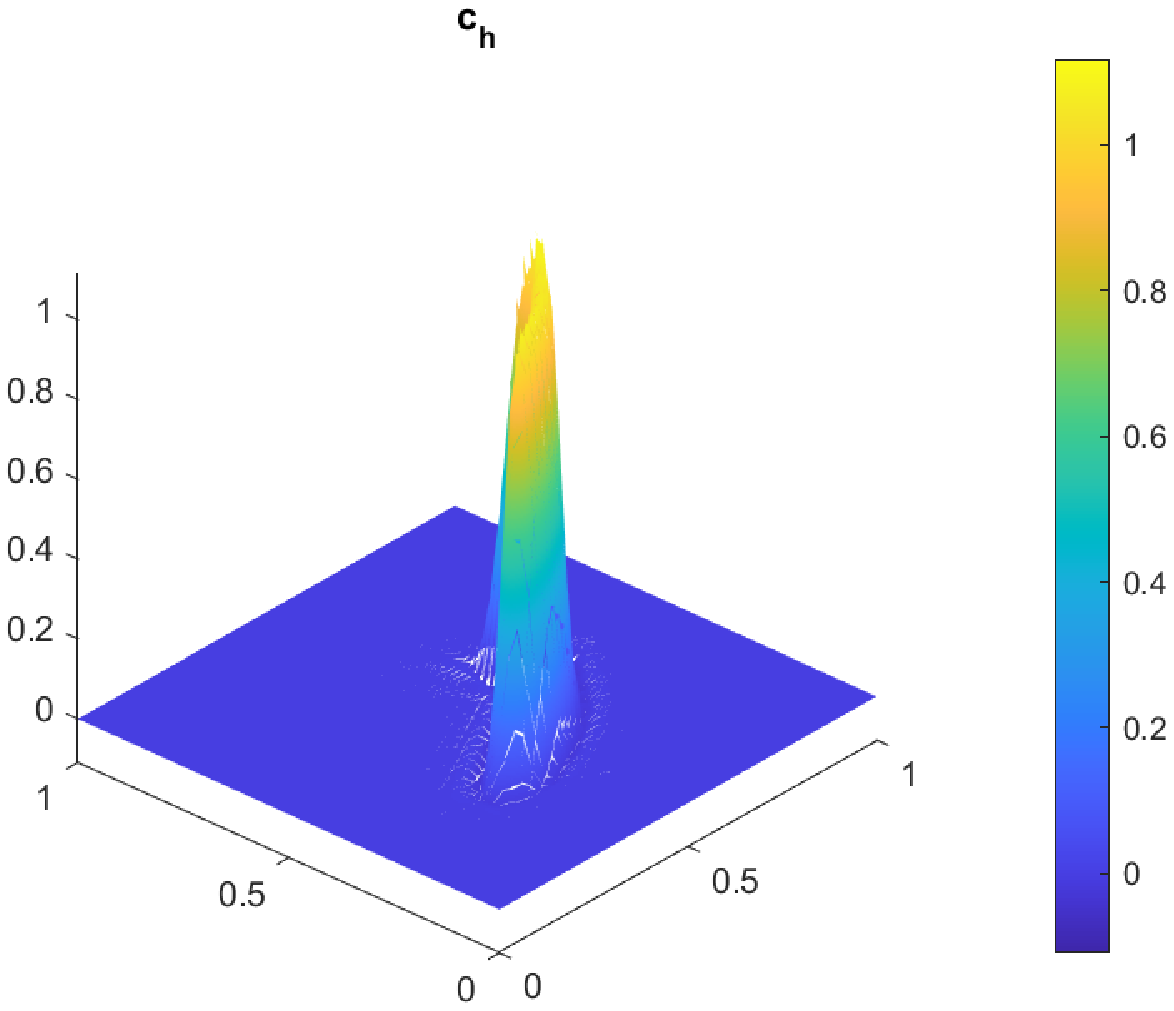}
\caption{Concentration at $t=6$ for Example~\ref{ex5}.}
\label{ex53:ch}
\end{figure}

\subsection{Example 4}\label{ex4}

In this example, the exact solution is also unknown. The computational domain corresponds to the rectangle $\Omega=(0,12)\times (0,6)$, where the Brinkman domain (with a maximum height of 4) is on the top and the Darcy subdomain (with a maximum height of 2.25) on the bottom. The two subdomains are separated by a step-polygonal interface and we use triangular meshes as the primal partition; see Figure~\ref{ex4:mesh} for an illustration. Note that the triangular meshes are generated using distmesh2d, cf. \cite{Persson04} and we use finer meshes near the interface. Each initial triangular mesh is subdivided into the union of triangles for the construction of the method. We let $\epsilon=1$, the permeability $K_D$ is selected as random numbers between $10^{-3}$ and $10^{-6}$, and $K=\alpha=1$. On the top segment of $\Gamma_B$, normal velocities are set to be zero, whereas on the left and right hand sides of the Brinkman domain we prescribe the following conditions
\begin{align*}
\bm{u}_B\cdot\bm{n}=\frac{1}{4}(y-4)(8-y)\quad \mbox{and}\quad \bm{u}_B\cdot\bm{n}=\frac{3}{16}(y-4)(8-y),
\end{align*}
respectively. Zero normal velocities are imposed for the vertical boundaries of $\Omega_D$ and Dirichlet pressure is imposed on the bottom, i.e., $p(x,0)=-10^3$. In addition, we let $\bm{f}_B=\bm{f}_D=\bm{0}$ and $f=0$. The parameters are set as $\phi=1, s=0.01$ and $c_{\text{in}}=1$ for the transport equation. The time step size is chosen to be $\Delta t=10^{-3}$. The approximated velocity and pressure are shown in Figure~\ref{ex4:uh-ph}. The approximated concentration at time $T=1,5,10,20$ are shown in Figure~\ref{ex4:ch} and we can observe that the concentration propagates to the right.

\begin{figure}[H]
\centering
\setlength{\abovecaptionskip}{-0.7cm}
\includegraphics[width=0.45\textwidth]{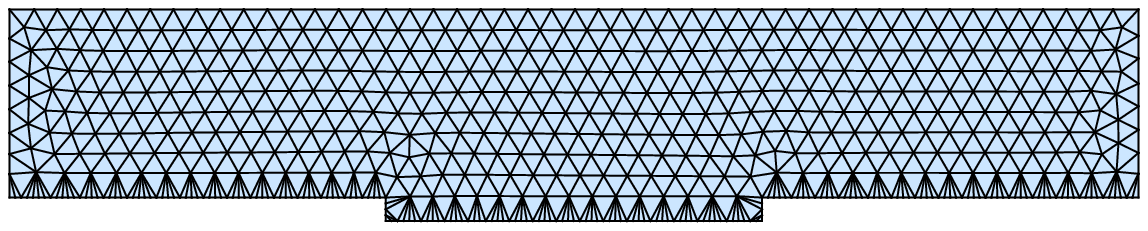}
\includegraphics[width=0.45\textwidth]{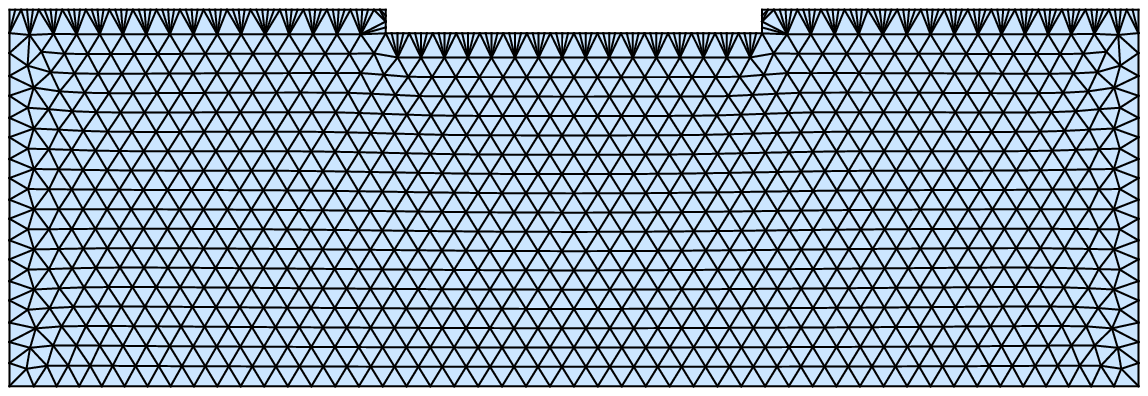}
\caption{Profile of the meshes used for Example~\ref{ex4}. The meshes for $\Omega_B$ (left) and the meshes for $\Omega_D$.}
\label{ex4:mesh}
\end{figure}

\begin{figure}[H]
\centering
\setlength{\abovecaptionskip}{-0.7cm}
\includegraphics[width=0.45\textwidth]{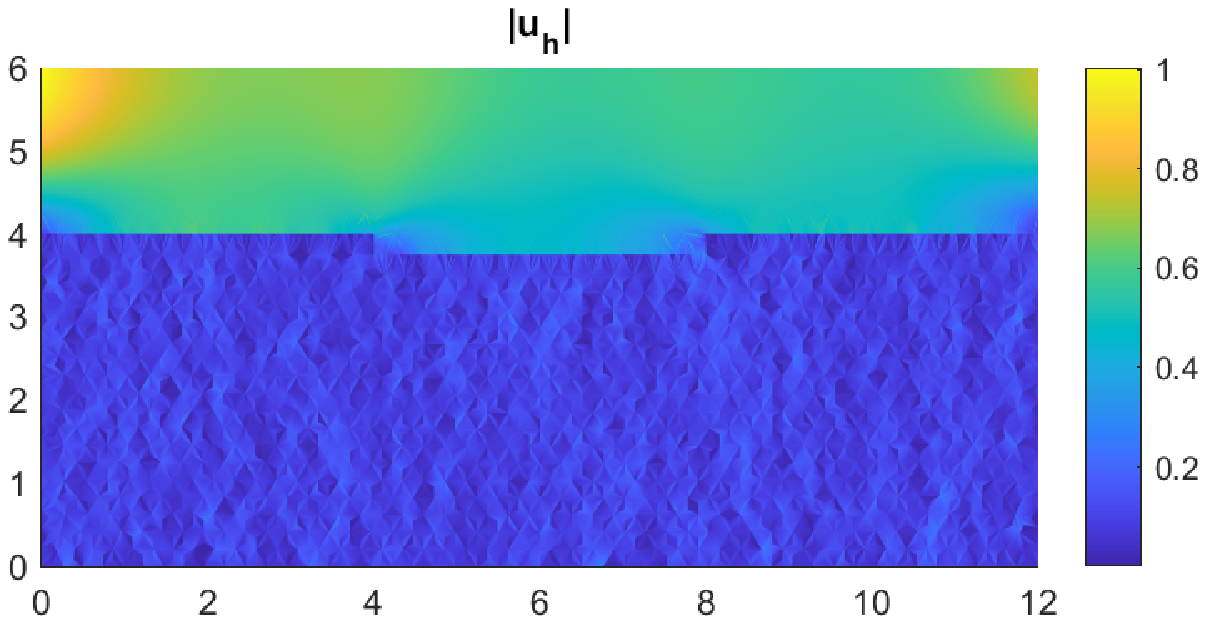}
\includegraphics[width=0.45\textwidth]{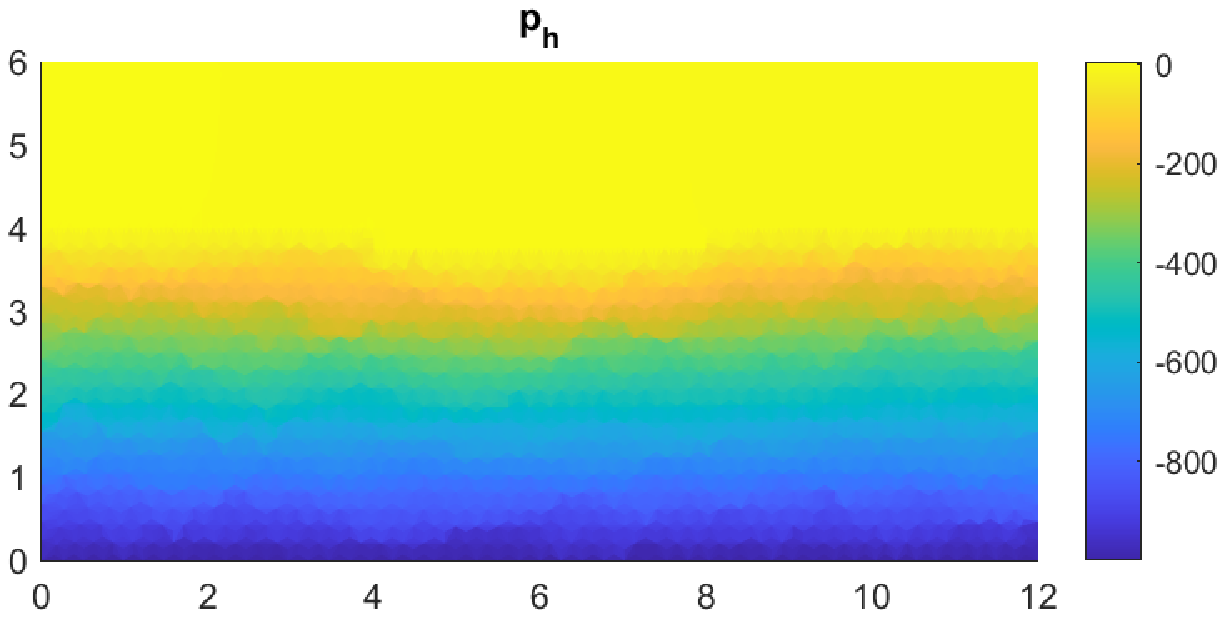}
\caption{Velocity magnitude $\bm{u}_h$ (left) and pressure (right) for Example~\ref{ex4}.}
\label{ex4:uh-ph}
\end{figure}

\begin{figure}[H]
\centering
\setlength{\abovecaptionskip}{-0.7cm}
\includegraphics[width=0.45\textwidth]{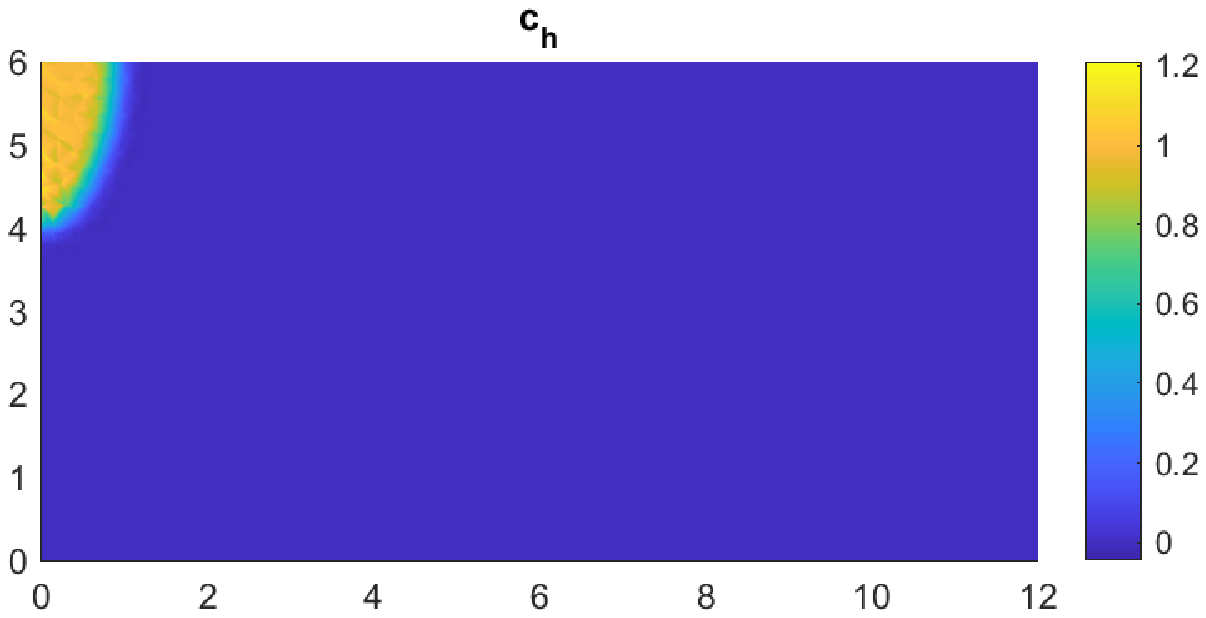}
\includegraphics[width=0.45\textwidth]{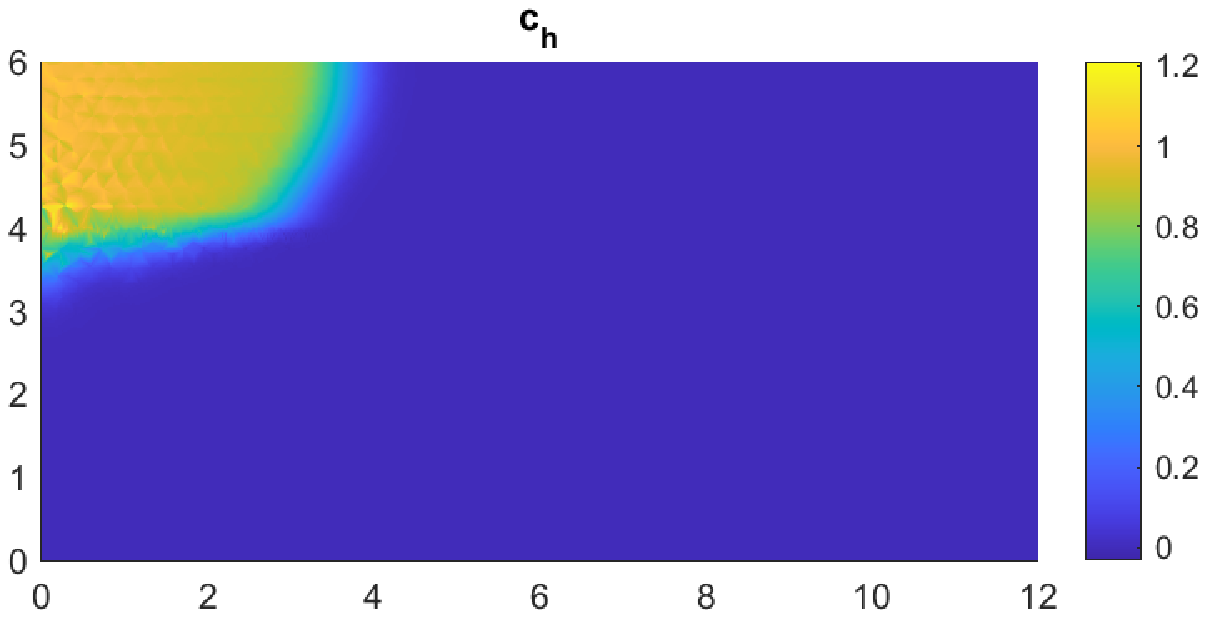}
\caption{Concentration at $t=1$ and $t=5$ for Example~\ref{ex4}.}
\label{ex4:ch}
\end{figure}

\begin{figure}[H]
\centering
\setlength{\abovecaptionskip}{-0.7cm}
\includegraphics[width=0.45\textwidth]{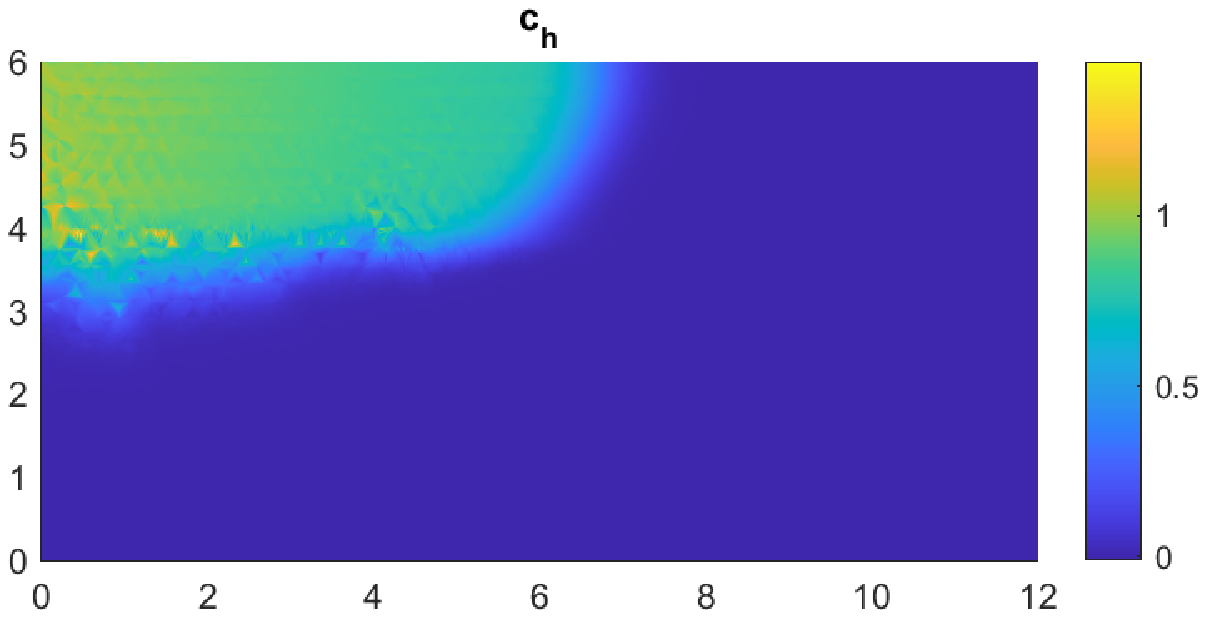}
\includegraphics[width=0.45\textwidth]{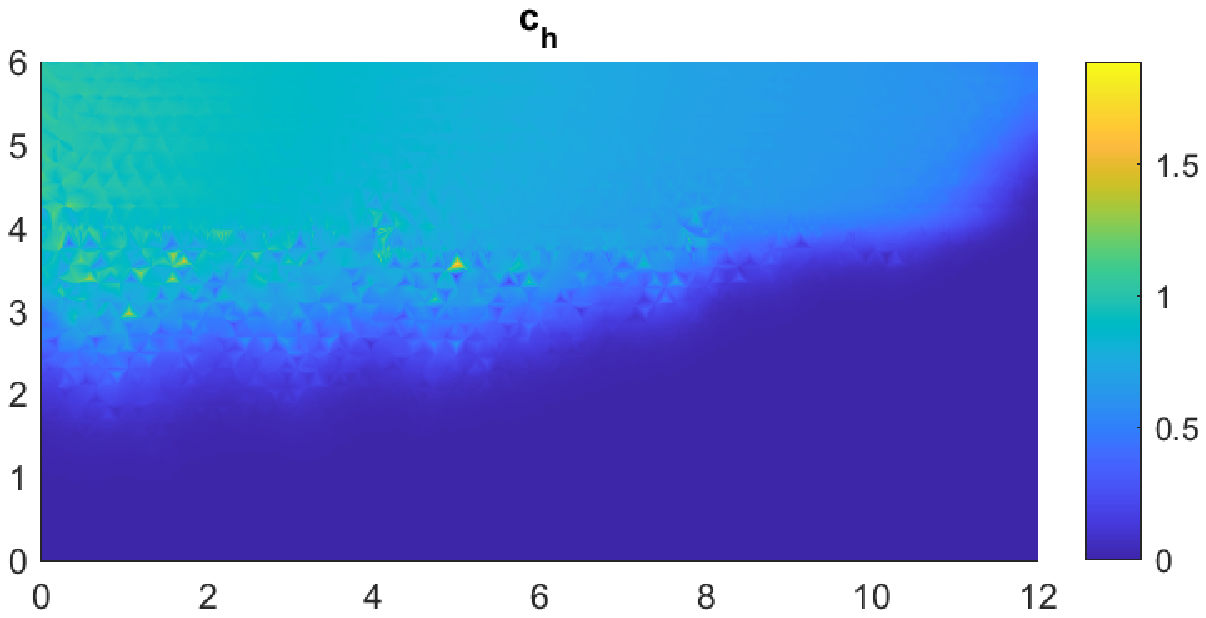}
\caption{Concentration at $t=10$ and $t=20$ for Example~\ref{ex4}.}
\label{ex42:ch}
\end{figure}

\section{Conclusion}\label{sec:conclusion}

In this paper we have designed a strongly  mass conservative scheme for the Brinkman-Darcy flow, where the interface conditions are enforced naturally in the discrete formulation. Theoretical analysis indicates that the proposed scheme is exactly divergence free in the Brinkman region and it is robust for both the Stokes limit and Darcy limit. Taking advantage of the mass conservation property, we design an upwinding staggered DG method for the transport equation, where the boundary correction terms are introduced to improve the stability estimate. Several numerical experiments illustrate that our scheme is indeed robust for both the Stokes limit and Darcy limit; in addition, optimal convergence rates can be achieved for various values of $\epsilon$. It is worth mentioning that the proposed scheme is pressure-robust and strongly mass conservative, which makes it a good candidate for the numerical simulation of coupled flow and transport.

\section*{Acknowledgments}

The research of Lina Zhao was supported by a grant from City University of Hong Kong (Project No. 7200699). The research of Shuyu Sun was supported by King Abdullah University of Science and Technology (KAUST, Saudi Arabia through the grants BAS/1/1351-01,URF/1/4074-01, and URF/1/3769-01).

\bibliographystyle{plain}
\bibliography{reference}

\end{document}